\documentclass{amsart}

\usepackage{amsmath}
\usepackage{amsthm}
\usepackage{amsfonts}
\usepackage{amssymb}
\usepackage{mathrsfs}
\usepackage{amscd}
\usepackage{url}

\newcommand{\re}{\mathbb{R}}

\newcommand{\N}{\mathbb{N}}

\newcommand{\lmd}{\lambda}

\newcommand{\eps}{\epsilon}

\newcommand{\dt}{\delta}

\def\af{\alpha}
\def\bt{\beta}

\def\rank{\mbox{rank}}

\newcommand{\sig}{\sigma}
\newcommand{\Sig}{\Sigma}

\newcommand{\reff}[1]{(\ref{#1})}

\newcommand{\prm}{\prime}

\newcommand{\mc}[1]{\mathcal{#1}}

\newcommand{\supp}[1]{\mbox{supp}(#1)}

\newcommand{\bdes}{\begin{description}}
\newcommand{\edes}{\end{description}}

\newcommand{\bal}{\begin{align}}
\newcommand{\eal}{\end{align}}

\newcommand{\bnum}{\begin{enumerate}}
\newcommand{\enum}{\end{enumerate}}

\newcommand{\bit}{\begin{itemize}}
\newcommand{\eit}{\end{itemize}}

\newcommand{\bea}{\begin{eqnarray}}
\newcommand{\eea}{\end{eqnarray}}
\newcommand{\be}{\begin{equation}}
\newcommand{\ee}{\end{equation}}

\newcommand{\baray}{\begin{array}}
\newcommand{\earay}{\end{array}}

\newcommand{\bsry}{\begin{subarray}}
\newcommand{\esry}{\end{subarray}}

\newcommand{\bca}{\begin{cases}}
\newcommand{\eca}{\end{cases}}

\newcommand{\bcen}{\begin{center}}
\newcommand{\ecen}{\end{center}}

\newcommand{\bbm}{\begin{bmatrix}}
\newcommand{\ebm}{\end{bmatrix}}

\newcommand{\bmx}{\begin{matrix}}
\newcommand{\emx}{\end{matrix}}

\newcommand{\bpm}{\begin{pmatrix}}
\newcommand{\epm}{\end{pmatrix}}

\newcommand{\btab}{\begin{tabular}}
\newcommand{\etab}{\end{tabular}}

\newtheorem{theorem}{Theorem}[section]
\newtheorem{pro}[theorem]{Proposition}

\newtheorem{lem}[theorem]{Lemma}

\theoremstyle{definition}
\newtheorem{exm}[theorem]{Example}

\setcounter{equation}{0}
\setcounter{subsection}{0}

\newcommand{\df}[1]{{\it{#1}}{\index{#1}}}
\newcommand{\ind}[1]{{{#1}}{\index{#1}}}


\begin{document}

\title[A Semidefinite Approach for TKMP]
{A Semidefinite Approach for Truncated $K$-Moment Problems}


\author[J.~William Helton]{J. William Helton}
\address{Department of Mathematics, University of California,
9500 Gilman Drive, La Jolla, CA 92093.}
\email{helton@math.ucsd.edu}

\author[Jiawang Nie]{Jiawang Nie}
\address{Department of Mathematics, University of California,
9500 Gilman Drive, La Jolla, CA 92093.}
\email{njw@math.ucsd.edu}

\begin{abstract}
A truncated moment sequence (tms) in $n$ variables and of degree $d$ is
a finite sequence $y=(y_\af)$ indexed by nonnegative integer vectors
$\af:=(\af_1,\ldots,\af_n)$ such that $\af_1+\cdots+\af_n\leq d$.
Let $K\subseteq \re^n$ be a semialgebraic set.
The truncated $K$-moment problem (TKMP) is:
how to check if a tms $y$ admits a $K$-measure $\mu$
(a nonnegative Borel measure supported in $K$) such that 
$y_\af = \int_K x_1^{\af_1}\cdots x_n^{\af_n}d\mu$ for every $\af$?
This paper proposes a semidefinite programming (SDP) approach for solving TKMP.
When $K$ is compact, we get the following results:
whether a tms admits a $K$-measure or not
can be checked  via solving a sequence of SDP problems;
when $y$ admits no $K$-measure, a certificate for the nonexistence can be found;
when $y$ admits one, a representing measure for $y$
can be obtained from solving the SDP problems under some necessary and
some sufficient conditions.
Moreover, we also propose a practical SDP method for finding flat extensions,
which in our numerical experiments always
found a finitely atomic representing measure when it exists.
\end{abstract}

\keywords{truncated moment sequence,
flat extension, measure, moment matrix,  localizing matrix,
semidefinite programming, sum of squares}

\subjclass{44A60, 47A57, 90C22, 90C90}

\maketitle

\section{Introduction}

A \df{truncated moment sequence (tms)}
in $n$ variables and of degree $d$ is a finite sequence
$y=(y_\af)$ indexed by nonnegative integer vectors
$\af:=(\af_1,\ldots,\af_n) \in \N^n$ with
$|\af|:=\af_1+\cdots+\af_n\leq d$.
Let $K$ be a semialgebraic set defined as
\be \label{def:K}
K = \left\{ x\in\re^n: \,
\baray{c}
g_1(x)\geq 0,\, \cdots,\, g_m(x) \geq 0
\earay
\right\},
\ee
with $g_1,\ldots,g_m$ being polynomials in $x$.
We say a tms $y$ {\it admits} a $K$-measure  
(a nonnegative Borel measure supported in $K$) 
if there exists a $K$-measure such that
\be  \label{y-meas}
y_\af = \int_K x^\af \mathit{d} \mu, \qquad
\forall \, \af \in \N^n: |\af| \leq d.
\ee
(Here, denote $x^\af:=x_1^{\af_1}\cdots x_n^{\af_n}$
for $x=(x_1,\ldots,x_n)$ and $\af=(\af_1,\ldots,\af_n)$.)
If \reff{y-meas} holds, we say $\mu$ is a representing measure for $y$
and $y$ admits the measure $\mu$.
The \df{truncated $K$-moment problem (TKMP)} is:
How to check if a tms $y$ admit a $K$-measure?
If it admits one, how to get a representing measure?
When $K=\re^n$, TKMP is referred to as
the truncated moment problem (TMP). Let
\def\msM{\mathscr{M}}
\be \label{m-sp:M(n,d)}
\msM_{n,d} :=
\big\{y=(y_{\af}): \af \in \N^n, |\af| \leq d \big\},
\ee
\index{$\mathscr{M}_{n,d}$}
and denote by \ind{$meas(y,K)$}
the set of all $K$-measures admitted by $y$.
Let
\be \label{meas:K-d}
\mathscr{R}_d(K) :=
\big\{y \in \mathscr{M}_{n,d}:  \,meas(y,K) \ne \emptyset \big\}.
\ee
A measure with finite support is called a {\it finitely atomic} measure.
A measure $\mu$ is called \df{$r$-atomic} if $|\supp{\mu}|=r$.

\subsection{Background}

\ind{Bayer and Teichmann} \cite{BT} proved an important result: \
{\it a tms $y\in \mathscr{M}_{n,d}$ admits a $K$-measure $\mu$
if and only if it admits a $K$-measure $\nu$ with
$|\supp{\nu}|\leq \binom{n+d}{d}$.}
Let $\re[x]:=\re[x_1,\ldots,x_n]$ be the ring of polynomials in
$(x_1,\ldots,x_n)$ with real coefficients,
and $\re[x]_d$ be the set of polynomials in $\re[x]$ whose
degrees are at most $d$.
Every tms $y\in \mathscr{M}_{n,d}$ defines
a \df{Riesz functional $\mathscr{L}_y$} acting on $\re[x]_d$ as
\be  \label{def:Riesz}
\mathscr{L}_y \Big( \sum_{|\af|\leq d} p_\af x^\af \Big)
:= \sum_{|\af|\leq d} p_\af y_\af.
\ee
For convenience, sometimes we also denote
\ind{$\langle p, y \rangle := \mathscr{L}_y(p)$}.
Let $P_d(K)$ denote the set of all polynomials in $\re[x]_d$
that are nonnegative on $K$.
A necessary condition for $y$ to belong to $\mathscr{R}_d(K)$
is that $\mathscr{L}_y$ is \df{$K$-positive}, i.e.,
\[
\mathscr{L}_y(p) \geq 0 \quad \forall \,
p \in P_d(K).
\]
This is because
\[
\mathscr{L}_y(p) = \int_K p \mathit{d}\mu \geq 0 \quad
\forall \, p \in P_d(K), \, \forall \, \mu \in meas(y,K).
\]
Indeed, when $K$ is compact, $\mathscr{L}_y$ being $K$-positive
is also sufficient for $y$ to belong to $\mathscr{R}_d(K)$.
This was implied by the proof of Tchakaloff's Theorem \cite{Tch}.
A condition stronger  than $\mathscr{L}_y$ being $K$-positive
is $\mathscr{L}_y$ being \df{strictly $K$-positive}, i.e.,
\[
\mathscr{L}_y(p) > 0 \quad \forall \,
p \in P_d(K), \, p \not\equiv 0 \mbox{ on } K.
\]
As will be shown in Lemma~\ref{lm:Rz:K>0}, when $K$ has nonempty interior,
a tms $y$ belongs to the interior of $\mathscr{R}_d(K)$ if and only if
its Riesz functional $\mathscr{L}_y$ is strictly $K$-positive.
Typically, it is quite difficult to check whether $\mathscr{L}_y$ is $K$-positive
or strictly $K$-positive.

A weaker but more easily checkable condition is that the localizing matrix
of a tms is positive semidefinite if it admits a $K$-measure.
If a symmetric matrix $X$ is positive semidefinite (resp., definite),
we denote $X\succeq 0$ (resp., $X\succ 0$).
For a tms $y$ of degree $2k$, let $M_k(y)$ be
the symmetric matrix linear in $y$ such that
\[
\mathscr{L}_y(p^2) = p^T  M_k(y)   p \qquad
\forall \, p\in \re[x]_k.
\]
(For convenience of notation, we still use $p$ to
denote the vector of coefficients of $p(x)$ in the graded lexicographical ordering.)
The matrix $M_k(y)$ is called a \df{$k$-th order moment matrix}.
For $h \in \re[x]_{2k}$, define the new tms $h\ast y$ such that
\[
\mathscr{L}_y(hq) \, = \,  \mathscr{L}_{h\ast y}(q)
\quad \forall \, q \in \re[x]_{2k-\deg(h)}.
\]
The tms $h\ast y$ is known as a shifting of $y$.
If $\deg(hp^2)\leq 2k$, then it holds that
\index{$L_h^{(k)}$ }
\be \label{df:Loc-Mat}
\mathscr{L}_y(hp^2) \, = \, p^T
\left( M_{k-\lceil \deg(h)/2 \rceil}(h\ast y) \right)   p.
\ee
The matrix $M_{k-\lceil \deg(h)/2 \rceil}(h\ast y)$ is called a
{\it $k$-th order localizing matrix} of $h$.
%
%
%

If a tms $y$ belongs to $\mathscr{R}_{2k}(K)$,
then for $i = 0, 1, \ldots, m$ (denote $g_0:= 1$)
\be  \label{loc-M>=0}
M_{k-d_i}(g_i \ast y) \succeq  0 \quad
\mbox{ where } d_i = \lceil \deg(g_i)/2\rceil.
\ee
This is because for every polynomial $p$ with $\deg(g_ip^2)\leq 2k$ we have
\[
p^T M_{k-d_i}(g_i \ast y)p  = \mathscr{L}_y(g_ip^2)
= \int_K g_ip^2 \mathit{d}\mu \geq 0
\]
for all $\mu \in meas(y,K)$.
Thus, \reff{loc-M>=0} is a necessary condition for $y$
to belong to $\mathscr{R}_{2k}(K)$.
In general, \reff{loc-M>=0} is not sufficient for $y$
to belong to $\mathscr{R}_{2k}(K)$.
However, in addition to \reff{loc-M>=0}, if $y$ is also \df{flat}, i.e.,
$y$ satisfies the rank condition
\be \label{cond:FEC}
\rank \, M_{k-d_g}(y) = \rank \, M_k(y),  \qquad
\ee
then $y$ belongs to $\mathscr{R}_{2k}(K)$.
The above integer $d_g$ is defined as
\be \label{df:dg}
d_g  := \max_{1\leq i \leq m}\{1, \lceil \deg(g_i)/2 \rceil \} .
\ee
The next important result is due to
Curto and Fialkow. \index{Curto and Fialkow Theorem}

\begin{theorem}[\cite{CF05}] \label{thm-CF:fec}
Let $K$ be defined in \reff{def:K} and $d$ be even.
If a tms $y\in \mathscr{M}_{n,d}$ satisfies \reff{loc-M>=0} and
the flat extension condition \reff{cond:FEC} for $k=d/2$,
then $y$ admits a unique $\rank\,M_k(y)$-atomic $K$-measure.
\end{theorem}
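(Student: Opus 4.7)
The plan is to combine three ingredients: the Curto--Fialkow flat extension theorem (producing an infinite flat moment sequence extending $y$), the Curto--Fialkow atomic-representation theorem for finite-rank moment sequences (producing an $r$-atomic measure on $\re^n$), and the given localizing matrix inequalities \reff{loc-M>=0} (forcing the atoms to lie in $K$). Set $r := \rank M_k(y)$.

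First I would unpack the flatness hypothesis. Because $\rank M_j(y)$ is nondecreasing in $j$ and $d_g \geq 1$, the condition $\rank M_{k-d_g}(y) = r = \rank M_k(y)$ forces all intermediate ranks to equal $r$; in particular $\rank M_{k-1}(y) = \rank M_k(y)$, so $M_k(y)$ is flat over $M_{k-1}(y)$. The classical flat extension theorem then yields a unique rank-$r$ moment matrix $M_{k+1}$ extending $M_k(y)$, and iterating produces a unique full moment sequence $\tilde y$ with $\rank M_j(\tilde y) = r$ for all $j \geq k - d_g$. The Curto--Fialkow theorem for flat infinite moment sequences then gives a unique $r$-atomic representing measure $\mu = \sum_{i=1}^r \lambda_i \delta_{u_i}$ of $\tilde y$, with $\lambda_i > 0$ and pairwise distinct $u_i \in \re^n$; truncating in degree, $\mu$ represents $y$.

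It remains to show each $u_i \in K$. Write $[u]_s$ for the column vector of monomials $u^\af$ with $|\af| \leq s$. Integrating against $\mu$ gives, for each $j = 1,\ldots,m$,
\[
M_{k-d_j}(g_j \ast y) \; = \; \sum_{i=1}^r \lambda_i \, g_j(u_i) \, [u_i]_{k-d_j} [u_i]_{k-d_j}^T.
\]
Since $d_j \leq d_g$, truncating $[u_i]_{k-d_j}$ to degree $\leq k - d_g$ recovers $[u_i]_{k-d_g}$; the identity $M_{k-d_g}(y) = \sum_i \lambda_i [u_i]_{k-d_g} [u_i]_{k-d_g}^T$ together with $\rank M_{k-d_g}(y) = r$ forces the $r$ vectors $[u_i]_{k-d_g}$ to be linearly independent, and hence so are their extensions $[u_i]_{k-d_j}$. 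Arranging these as the columns of a matrix $V$ of full column rank $r$, the displayed identity reads $V \diag(\lambda_i g_j(u_i)) V^T \succeq 0$, which forces $\lambda_i g_j(u_i) \geq 0$ and hence $g_j(u_i) \geq 0$. Thus $u_i \in K$ for all $i$, so $\mu$ is a $K$-measure; uniqueness follows from uniqueness at each step.

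The main obstacle is the flat extension construction of the first step, the technical heart of the Curto--Fialkow theory, which would be invoked as a black box; the rest of the argument (extracting an atomic representing measure from the finite-rank infinite moment sequence and placing the atoms in $K$ via the localizing matrices) is then routine linear algebra.
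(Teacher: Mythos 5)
The paper does not prove this theorem: it is stated as a cited result from Curto--Fialkow \cite{CF05}, with Laurent \cite{Lau05} indicated for an exposition and proof, so there is no in-paper argument to compare against. Your proposal is correct and is essentially that standard proof: the chain $\rank M_{k-d_g}(y)\le\cdots\le\rank M_k(y)$ reduces the hypothesis to $\rank M_{k-1}(y)=\rank M_k(y)$, the Curto--Fialkow flat extension theorem (legitimately invoked as a black box) yields the unique rank-$r$ full extension and its $r$-atomic measure, and your localizing-matrix computation, using that $\rank M_{k-d_g}(y)=r$ forces the vectors $[u_i]_{k-d_g}$ (hence the longer $[u_i]_{k-d_j}$) to be linearly independent, correctly pins each atom inside $K$. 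The one step you state too tersely is uniqueness: to justify it one should observe that any $r$-atomic representing measure of $y$ generates a full moment sequence of rank $\le r$ extending $M_k(y)$, which is therefore flat at every degree and must coincide with the unique iterated flat extension, so the measure coincides with $\mu$; with that sentence added the argument is complete.
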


A nice exposition and proof of Theorem~\ref{thm-CF:fec}
can be found in Laurent \cite{Lau05}.
Flat extensions are not only used in solving TKMPs,
but also frequently used for solving polynomial optimization problems
(cf. \cite{HenLas05,LasBok,Lau05,Lau}).

There are other necessary or sufficient conditions
for a tms $y$ to belong to $\mathscr{R}_d(K)$, like recursively generated relations.
Most of them are about the case of $M_k(y)$ being singular.
We refer to \cite{CF91,CF96,CF98,CF02,CF05,CF052,CF11,F082}
for the work in this area.
When $M_k(y)$ is positive definite, there is little work on TKMPs,
except for the cases $n=1$ (cf. \cite{CF91}), or $d=2$ or $(n,d)=(2,4)$ (cf. \cite{FiNi1}).

In solving TKMPs, flat extensions play an important role.
It is interesting to know when and how a tms $y \in \mathscr{M}_{n,d}$
is extendable to a flat tms $w \in \mathscr{M}_{n,2k}$ with $2k\geq d$.
We say that $y$ extends to $w$ (or $w$ is an extension of $y$)
if $y_\af = w_\af$ for all $|\af|\leq d$.
There is an important result due to
Curto and Fialkow \cite{CF05} about this.

\begin{theorem}[\cite{CF05}]\label{CF:flat-extn}
Let $K$ be defined in \reff{def:K}.
Then a tms $y \in \mathscr{M}_{n,d}$ admits a $K$-measure if and only if
it is extendable to a flat tms $w \in \mathscr{M}_{n,2k}$ with $2k\geq d$
and $M_{k-d_i}(g_i \ast w)\succeq 0$ for $i=0,1,\ldots,m$.
\end{theorem}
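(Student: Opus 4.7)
I would prove the two directions separately, with Theorem~\ref{thm-CF:fec} doing the work for one direction and the Bayer--Teichmann theorem doing the work for the other.

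\emph{Sufficiency ($\Leftarrow$).} Suppose $y$ extends to $w\in\mathscr{M}_{n,2k}$ that is flat and satisfies $M_{k-d_i}(g_i\ast w)\succeq 0$ for $i=0,1,\ldots,m$. Then $w$ meets exactly the hypotheses of Theorem~\ref{thm-CF:fec} (with degree $2k$), so $w$ admits a unique $\rank M_k(w)$-atomic $K$-measure $\mu$. Restricting the integration identities $w_\af=\int_K x^\af d\mu$ to $|\af|\leq d$ shows that $\mu$ is also a representing $K$-measure for $y$.

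\emph{Necessity ($\Rightarrow$).} This is where the real work lies. Suppose $y$ admits a $K$-measure. By the Bayer--Teichmann theorem cited in the background, $y$ admits a finitely atomic $K$-measure
\[
\nu = \sum_{j=1}^{r}\lambda_j\,\delta_{u_j},\qquad u_j\in K,\ \lambda_j>0,\ r\leq\binom{n+d}{d}.
\]
The plan is to use $\nu$ itself to manufacture the flat extension by simply integrating higher-order monomials against $\nu$: for any $k$ with $2k\geq d$ define
\[
w_\af := \sum_{j=1}^{r}\lambda_j\, u_j^{\af},\qquad |\af|\leq 2k.
\]
Then $w$ is an extension of $y$, and the key structural identity
\[
M_k(w) \;=\; V_k^T\,\diag(\lambda_1,\ldots,\lambda_r)\,V_k,
\]
where $V_k$ is the $r\times\binom{n+k}{k}$ Vandermonde-type matrix with entries $u_j^{\af}$ (rows indexed by atoms, columns by $|\af|\leq k$), immediately gives $\rank M_k(w)=\rank V_k$, and similarly $\rank M_{k-d_g}(w)=\rank V_{k-d_g}$.

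\emph{Choosing $k$.} The main obstacle is arranging flatness, i.e.\ $\rank M_{k-d_g}(w)=\rank M_k(w)$. Since the atoms $u_1,\ldots,u_r$ are distinct, Lagrange-type interpolation yields polynomials of degree at most $r-1$ separating them; hence for any $k$ with $k-d_g\geq r-1$ the matrix $V_{k-d_g}$ already has full row rank $r$, and adding rows for higher-degree monomials (to get $V_k$) cannot increase the rank beyond $r$. Choosing $k\geq\max\{\lceil d/2\rceil,\ r-1+d_g\}$ thus yields $\rank M_{k-d_g}(w)=\rank M_k(w)=r$, so $w$ is flat.

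\emph{Localizing PSD conditions.} Finally, for each $i=0,1,\ldots,m$ and each polynomial $p$ with $\deg(g_i p^2)\leq 2k$,
\[
p^T M_{k-d_i}(g_i\ast w)\,p \;=\; \mathscr{L}_w(g_ip^2) \;=\; \int_K g_i\,p^2\,d\nu \;\geq\; 0,
\]
since $g_i\geq 0$ on $K$ and $\nu$ is supported in $K$. Hence $M_{k-d_i}(g_i\ast w)\succeq 0$. The harder half to pin down rigorously is the interpolation estimate yielding $\rank V_{k-d_g}=r$; everything else is bookkeeping around the definition of $w$.
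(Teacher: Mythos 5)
The paper does not prove this statement at all: it is quoted verbatim from Curto--Fialkow \cite{CF05} as a known result, so there is no in-paper argument to compare against. Judged on its own, your proof is correct and is essentially the standard argument for this theorem. The sufficiency direction is exactly the right reduction to Theorem~\ref{thm-CF:fec}. For necessity, the combination of Bayer--Teichmann (to replace an arbitrary representing measure by an $r$-atomic one with $r\leq\binom{n+d}{d}$) with the factorization $M_t(w)=V_t^T\,\diag(\lambda)\,V_t$ is the right mechanism, and the rank stabilization is sound: the step you flag as the ``harder half'' is in fact routine, since for distinct points $u_1,\ldots,u_r\in\re^n$ one builds $p_j=\prod_{i\neq j}\ell_{ij}/\ell_{ij}(u_j)$ with $\ell_{ij}$ affine, $\ell_{ij}(u_i)=0$, $\ell_{ij}(u_j)\neq 0$, so $\deg p_j\leq r-1$ and $p_j(u_i)=\delta_{ij}$; this shows the evaluation functionals are independent on $\re[x]_{r-1}$, hence $\rank V_t=r$ for all $t\geq r-1$, giving $\rank M_{k-d_g}(w)=\rank M_k(w)=r$ once $k\geq r-1+d_g$. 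The localizing positivity is immediate as you say. The only cosmetic point worth tightening is to state explicitly that $\rank V_k\leq r$ because $V_k$ has only $r$ rows, which closes the sandwich $r=\rank V_{k-d_g}\leq\rank V_k\leq r$.
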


%
%

In view of the above result, the following questions arise naturally:
i) If a tms $y$ admits no $K$-measure,
how do we get a certificate for the nonexistence? \
ii) If $y$ admits a $K$-measure, how do we get
a representing measure? \
iii) Preferably, if a tms admits a $K$-measure,
how can we get a finitely atomic representing measure?

\subsection{Contributions}
\label{sec:contrib}

This paper focuses on the questions above.
We propose a general semidefinite programming (SDP) approach for solving TKMPs.
When $K$ is compact as in \reff{def:K}, we have the following results:
\bnum
\item
\label{it:goal1}
 Whether a tms admits a $K$-measure or not
can be checked via solving a sequence of semidefinite programs $\{ \tt{(SDP)k} \}$
(see \reff{max-lmd:extn-y} or \reff{max-lmd:K-qmod}).
\begin{enumerate}

\item A tms $y$ admits a $K$-measure if and only if
the optimal value of  {\tt (SDP)k} is nonnegative for all $k$.
Consequently, when $y$ admits no $K$-measure,
the optimal value of {\tt (SDP)k} will be negative for some $k$,
which gives a certificate for the nonexistence of a representing measure.
See Theorems~\ref{sdp-pr:K-mo} and \ref{put-infeas}.

\item When $y$ admits a $K$-measure,
we show how to construct such a measure for $y$
via solving {\tt (SDP)k} for a certain $k$ and using the flat extension condition.
This works under some necessary and sufficient conditions
(they are not far away from each other).
See Theorems~\ref{sdp-pr:K-mo} and \ref{thm:suf-Kmom}.

\end{enumerate}

\item
We propose a practical SDP method for finding flat extensions,
and thus provide a way for constructing finitely atomic
representing measures.
It is based on optimizing linear functionals (see \reff{min-tr:qmod})
on moment matrices
and consists of a sequence of SDP problems.
In our computational experiences, the method always produced a flat extension
if it exists. We derive a bit of supporting theory for this fact.
When a tms admits a $K$-measure,
we prove that for a dense subset of linear functionals,
the method asymptotically produces a flat extension.
When a tms does not admit a $K$-measure,
we prove that this sequence of SDP problems
will become infeasible after some steps.
This method is also applicable when $K$ is noncompact.
See Subsection~4.1 and Theorem~\ref{thm:cvg-flat}.

\enum
%
The results described in (1) are given in Section~\ref{sec:TKMPcompact},
and those in (2) are in Section~\ref{sec:heuristic}.
Section~2 presents some background
for proving these results.

\section{Preliminaries}
\setcounter{equation}{0}

\subsection{Notation}
The symbol $\N$ (resp., $\re$) denotes the set of nonnegative integers
(resp., real numbers),
and $\re_+^n$ denotes the nonnegative orthant of $\re^n$.
For $t\in \re$, $\lceil t\rceil$ (resp., $\lfloor t\rfloor$)
denotes the smallest integer not smaller
(resp., the largest integer not greater) than $t$.
The \df{$[x]_d$} denotes the column vector of all monomials
with degrees not greater than $d$:
\[
[x]_d = [\, 1 \quad  x_1 \quad \cdots \quad x_n \quad x_1^2 \quad
x_1x_2 \quad \cdots \cdots
\quad x_1^d \quad x_1^{d-1}x_2 \quad \cdots \cdots \quad x_n^d \,]^T.
\]
For a set \df{$S \subseteq \re^n$}, $|S|$ denotes its cardinality.
The symbol $\mbox{int}(\cdot)$ denotes the interior of a set.
For a matrix $A$, $A^T$ denotes its transpose;
if $A$ is symmetric, $\lmd_{min}(X)$
denotes its minimum eigenvalue.
For $u\in \re^N$, $\| u \|_2 := \sqrt{u^Tu}$ denotes the standard Euclidean norm,
and $B(u,r) :=\{x\in \re^n:\, \|x-u\|_2 \leq r\}$ denotes the closed ball
with center $u$ and radius $r$.
The $\bullet$ denotes the standard Frobenius inner product in matrix spaces.
For a matrix $A$, $\|A\|_F$ denotes the Frobinus norm of $A$, i.e.,
$\|A\|_F = \sqrt{Trace(A^TA)}$.
%
%
The zero set of a polynomial $q$ is denoted by $\mc{Z}(q)$.
For a tms $w$, $w|_t$ denotes the subsequence of $w$
whose indices have degrees not greater than $t$, i.e.,
$w|_t$ is a truncation of $w$ with degree $t$.
For a measure $\mu$, $\supp{\mu}$ denotes its support.
A polynomial $f\in \re[x]$ is said to be \df{sum of squares (SOS)}
if there exist $f_1,\ldots,f_k \in \re[x]$ such that
$f=f_1^2+\cdots+f_k^2$.
The set of all SOS polynomials in $n$ variables and of degree $d$
is denoted by $\Sig_{n,d}$.
%
%
The symbol $\mathscr{C}(K)$ denotes the space of all functions that are
continuous on a compact set $K$, and $\|\cdot\|_\infty$
denotes its standard $\infty$-norm.

\subsection{Truncated moments}

For a compact set $K$, a tms
admits a measure supported in $K$ if and only if
its Riesz functional is $K$-positive.
This result is implied in the proof of Tchakaloff's Theorem \cite{Tch}
(cf. \cite{FiNi1}). \index{Tchakaloff's Theorem}

\begin{theorem}[Tchakaloff] \label{thm:Tchak}
Let $K$ be a compact set in $\re^n$. A tms $y$
admits a $K$-measure if and only if its
Riesz functional $\mathscr{L}_y$ is $K$-positive.
\end{theorem}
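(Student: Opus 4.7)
The forward direction is immediate: if $y$ admits a $K$-measure $\mu$ and $p \in P_d(K)$, then $\mathscr{L}_y(p) = \int_K p\, \mathit{d}\mu \geq 0$. So the real content is the converse, and the plan is to produce the measure by extending $\mathscr{L}_y$ to a positive linear functional on $\mathscr{C}(K)$ and invoking Riesz--Markov.

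Concretely, view $\re[x]_d$ as a linear subspace of $\mathscr{C}(K)$ (well-defined because $K$ is compact, so polynomials are bounded on $K$). Define the functional
\[
q(f) \, := \, \inf \Big\{ \mathscr{L}_y(p) \, : \, p \in \re[x]_d, \ p(x) \geq f(x) \ \forall\, x \in K \Big\}, \qquad f \in \mathscr{C}(K).
\]
Compactness of $K$ ensures the set on the right is nonempty (take a constant polynomial $p \equiv \|f\|_\infty$) and, after a uniform lower bound via $p \equiv -\|f\|_\infty$ together with $K$-positivity applied to $p + \|f\|_\infty$, that $q(f) > -\infty$. Routine checks give positive homogeneity and subadditivity, so $q$ is sublinear on $\mathscr{C}(K)$.

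The crucial step is $q(p) = \mathscr{L}_y(p)$ for every $p \in \re[x]_d$. The inequality $q(p) \leq \mathscr{L}_y(p)$ is obvious from the choice $p$ itself in the infimum. For the reverse, any competitor $\tilde p \in \re[x]_d$ with $\tilde p \geq p$ on $K$ satisfies $\tilde p - p \in P_d(K)$; $K$-positivity gives $\mathscr{L}_y(\tilde p - p) \geq 0$, hence $\mathscr{L}_y(\tilde p) \geq \mathscr{L}_y(p)$, and taking the infimum yields $q(p) \geq \mathscr{L}_y(p)$. This is the one place where the hypothesis is used in an essential way, and is what I expect to be the main conceptual step (the rest is bookkeeping).

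Now apply the Hahn--Banach extension theorem to extend $\mathscr{L}_y$ from $\re[x]_d$ to a linear functional $L$ on $\mathscr{C}(K)$ with $L(f) \leq q(f)$ for all $f$. If $f \geq 0$ on $K$, then $p \equiv 0$ is an admissible competitor for $-f$, so $L(-f) \leq q(-f) \leq 0$, i.e.\ $L(f) \geq 0$. Thus $L$ is a positive linear functional on $\mathscr{C}(K)$. By the Riesz--Markov representation theorem there exists a nonnegative Borel measure $\mu$, automatically supported in $K$, such that $L(f) = \int_K f\, \mathit{d}\mu$ for all $f \in \mathscr{C}(K)$. Specializing to $f = x^\af$ with $|\af| \leq d$ gives $y_\af = \mathscr{L}_y(x^\af) = L(x^\af) = \int_K x^\af\, \mathit{d}\mu$, exhibiting $\mu$ as a representing measure for $y$. \eproof
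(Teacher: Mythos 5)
Your argument is correct, but note that the paper does not actually prove this theorem: it states it as known, attributing it to the proof of Tchakaloff's theorem \cite{Tch} (cf.\ \cite{FiNi1}), so any proof you give is necessarily your own route. What you have written is the classical M.~Riesz/Riesz--Haviland extension argument: a sublinear majorant $q$ built from polynomial dominators, Hahn--Banach, positivity of the extension, and Riesz--Markov on $\mathscr{C}(K)$. This is a clean, self-contained proof, and compactness enters exactly where it must (constants dominate $f$ on $K$, and Riesz--Markov applies to $\mathscr{C}(K)$); for noncompact $K$ the analogous argument breaks because $x^\af$ is not in the space on which the extended functional is represented, consistent with the known failure of the theorem there. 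The contrast with the source the paper cites is that Tchakaloff-style proofs (and Bayer--Teichmann, which the paper invokes as Theorem 1.1) work with the closed cone of moments of finitely atomic measures and so produce an atomic representing measure directly, whereas your construction yields an a priori arbitrary nonnegative Borel measure --- which is all the statement requires, and which can afterwards be replaced by an atomic one via Bayer--Teichmann. One small point worth making explicit: to apply Hahn--Banach you need $\mathscr{L}_y$ to be well defined on the image of $\re[x]_d$ under restriction to $K$, which may fail to be injective; but if $p_1=p_2$ on $K$ then $\pm(p_1-p_2)\in P_d(K)$ and $K$-positivity forces $\mathscr{L}_y(p_1)=\mathscr{L}_y(p_2)$, so there is no real gap.
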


In the following, we characterize the interior of $\mathscr{R}_d(K)$
via strict $K$-positivity of Riesz functionals.

\begin{lem} \label{lm:Rz:K>0}
Let $K \subseteq \re^n$ be a set with $\mbox{int}(K)\ne\emptyset$ and
$y$ be a tms in $\mathscr{M}_{n,d}$.
Then $y$ belongs to $\mbox{int}\left(\mathscr{R}_d(K)\right)$
if and only if $\mathscr{L}_y$ is strictly $K$-positive.
\end{lem}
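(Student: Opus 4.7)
The plan is to set up the standard dual pairing---identify $\mathscr{M}_{n,d}$ and $\re[x]_d$ with $\re^N$, $N=\binom{n+d}{d}$, so that $\mathscr{L}_y(p) = \langle p,y\rangle$ is the usual inner product---and then exploit the convex cone structure of $\mathscr{R}_d(K)$ together with supporting hyperplanes. A preliminary simplification will be useful: since $\mbox{int}(K)\ne \emptyset$, any polynomial vanishing identically on $K$ must vanish on an open set and hence is the zero polynomial, so ``strictly $K$-positive'' reduces to $\mathscr{L}_y(p)>0$ for every nonzero $p\in P_d(K)$.

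The forward direction will be a short perturbation argument. Given $y\in \mbox{int}(\mathscr{R}_d(K))$ and $0\ne p\in P_d(K)$, choose $\eps>0$ small enough that $y-\eps p \in \mathscr{R}_d(K)$, and let $\mu$ be a $K$-measure admitted by $y-\eps p$. Since $p\geq 0$ on $K$, integrating gives $\mathscr{L}_{y-\eps p}(p) \geq 0$, which rearranges to $\mathscr{L}_y(p) \geq \eps \|p\|_2^2 > 0$. No compactness of $K$ is needed here.

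For the reverse direction, strict $K$-positivity implies $K$-positivity, so Tchakaloff (Theorem~\ref{thm:Tchak}) gives $y\in \mathscr{R}_d(K)$. Suppose for contradiction that $y$ lies on the boundary of $\mathscr{R}_d(K)$. Since $\mathscr{R}_d(K)$ is a convex cone, the supporting hyperplane theorem yields a nonzero $q\in \re^N$ with $\langle q, y\rangle \leq \langle q, z\rangle$ for every $z\in \mathscr{R}_d(K)$; scaling (using that $\mathscr{R}_d(K)$ is a cone containing $0$) forces $\langle q, y\rangle \leq 0$ and $\langle q,z\rangle \geq 0$ throughout $\mathscr{R}_d(K)$. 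Testing against the Dirac moment sequences $(x_0^\af)_{|\af|\leq d}$ for $x_0\in K$ shows $q(x_0)\geq 0$, i.e., $q\in P_d(K)\setminus\{0\}$; the preliminary simplification together with strict $K$-positivity then give $\langle q,y\rangle = \mathscr{L}_y(q) > 0$, contradicting $\langle q,y\rangle\leq 0$.

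The main point to handle carefully is the supporting hyperplane step: recognizing the supporting functional $q$ as an element of $P_d(K)$, via the Dirac moment sequences and cone scaling (essentially the identification $(\mathscr{R}_d(K))^* = P_d(K)$). A small side check is that $\mathscr{R}_d(K)$ is full-dimensional, so that the supporting hyperplane statement is nonvacuous; this follows from $P_d(K)$ being pointed, which in turn comes from $\mbox{int}(K)\ne\emptyset$. Everything else is routine bookkeeping with the Riesz pairing.
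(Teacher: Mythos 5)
Your overall route is genuinely different from the paper's and, apart from one step, it is sound. The paper proves the forward direction by perturbing $y$ in the direction of a fixed auxiliary tms $\zeta$ (generated by the Gaussian measure restricted to $K$), and the backward direction by citing two results of \cite{FiNi1}: strict $K$-positivity is an open condition on $\mathscr{M}_{n,d}$, and every strictly $K$-positive tms admits a $K$-measure. You instead work entirely inside $\re^N$ with the Riesz pairing: your forward direction perturbs $y$ by $-\eps p$ itself (cleaner than the paper's and equally valid; note $\eps$ may depend on $p$, which is all you need), and your backward direction is a self-contained convex-separation argument resting on the identification of supporting functionals of $\mathscr{R}_d(K)$ with elements of $P_d(K)$ via the Dirac sequences $[x_0]_d$. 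This buys a proof that does not lean on the external Fialkow--Nie theorems; what it costs is that you must supply the full-dimensionality of $\mathscr{R}_d(K)$ yourself, which you correctly reduce to $\mbox{int}(K)\ne\emptyset$.

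The one genuine problem is the first sentence of your reverse direction: Theorem~\ref{thm:Tchak} requires $K$ to be compact, but the lemma assumes only $\mbox{int}(K)\ne\emptyset$; for noncompact $K$, mere $K$-positivity of $\mathscr{L}_y$ does not imply $y\in\mathscr{R}_d(K)$, so this step fails exactly where the lemma is most delicate (the paper covers that case by citing Theorem~2.4 of \cite{FiNi1}, which handles closed, possibly noncompact $K$ under \emph{strict} positivity). Fortunately the step is also unnecessary: if $y\notin\mbox{int}(\mathscr{R}_d(K))$, then whether $y$ lies on the boundary of the closure of $\mathscr{R}_d(K)$ or outside that closure, the separation theorem still produces a nonzero $q$ with $\langle q,y\rangle\le\langle q,z\rangle$ for all $z\in\mathscr{R}_d(K)$, and your cone-scaling and Dirac-sequence arguments then yield $q\in P_d(K)\setminus\{0\}$ with $\langle q,y\rangle\le 0$, contradicting strict $K$-positivity. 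So rephrase the contradiction hypothesis as ``$y\notin\mbox{int}(\mathscr{R}_d(K))$'' rather than ``$y$ lies on the boundary of $\mathscr{R}_d(K)$,'' delete the appeal to Tchakaloff, and the argument goes through in the stated generality, with membership of $y$ in $\mathscr{R}_d(K)$ emerging as part of the conclusion rather than serving as a prerequisite.
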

\begin{proof}
$``\Rightarrow"$ Suppose $y$ belongs to $\mbox{int}\left(\mathscr{R}_d(K)\right)$.
Let $\zeta \in \mathscr{M}_{n,d}$ be the tms generated 
by the standard Gaussian measure restricted to $K$.
Then, for every $p\in P_d(K)$ with $p|_K \not\equiv 0$,
we must have $\mathscr{L}_{\zeta}(p) > 0$ because $\mbox{int}(K) \ne \emptyset$.
If $\eps>0$ is small enough, the tms $v:=y-\eps \zeta$ belongs to $\mathscr{R}_d(K)$
and it holds that
\[
\mathscr{L}_y(p) = \mathscr{L}_v(p) + \eps \mathscr{L}_{\zeta}(p)
\geq \eps \mathscr{L}_{\zeta}(p) > 0.
\]
This means that $\mathscr{L}_y$ is strictly $K$-positive.

$``\Leftarrow"$ Suppose $\mathscr{L}_y$ is strictly $K$-positive.
Lemma~2.3 of \cite{FiNi1} implies that, for some $\dt>0$, the
$\mathscr{L}_{w}$ is strictly $K$-positive for all $w\in \mathscr{M}_{n,d}$
with $\|w-y\|_2 < \dt$. Then, by Theorem~2.4 of \cite{FiNi1},
every such a tms $w$, including $y$, belongs to $\mathscr{R}_d(K)$.
This implies that $y$ belongs to $\mbox{int}\left(\mathscr{R}_d(K)\right)$.
\end{proof}

A tms $w \in \mathscr{M}_{n,2k}$ generates the moment matrix $M_k(w)$.
Recall that, for a polynomial $f$, we still denote by $f$
the vector of its coefficients.
The vector $f$ is indexed by exponents of $x^\af$.
The matrix vector product $M_k(w) f$
is defined in the usual way, i.e.,
\[
M_k(w) f \, = \, \mathscr{L}_w\big(f[x]_k\big).
\]
We say $p\in \ker M_k(w)$ if $M_k(w)p=0$, i.e., $\mathscr{L}_w(p x^\af)=0$
for every $|\af|\leq k$.

An \df{ideal of $\re[x]$} is a subset $ I \subseteq \re[x]$
such that $I + I \subseteq I$ and $p\cdot q \in I$ for every $p\in I$ and $q \in \re[x]$.
Given $p_1,\ldots,p_m \in \re[x]$,
denote by $\langle p_1,\cdots,p_m \rangle $ the ideal
generated by $p_1,\ldots, p_m$.
If $I=\langle p_1,\ldots, p_m \rangle$ and every $p_i \in \ker M_k(w)$,
then $I \subseteq  \langle \ker M_k(w) \rangle$,
the ideal generated by polynomials in $\ker M_k(w)$.

\begin{lem} \label{lm:pq-ker}
Let $w \in \mathscr{M}_{n,2k}, h, p \in \re[x]$ be such that
$
H:=M_{k- \lceil \deg(h)/2 \rceil}(h\ast w)\succeq 0.
$
If $p,q$ are polynomials with $p \in \ker H$ and
\[ \deg(pq) \leq k-\lceil \deg(h)/2 \rceil-1, \]
then we have $pq \in \ker H$.
\end{lem}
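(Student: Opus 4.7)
The plan is to reduce to the monomial case and then induct on degree using the PSD (Cauchy--Schwarz) characterization of $\ker H$. I would first set $\ell := k - \lceil \deg(h)/2 \rceil$ and $z := h \ast w$, so $H = M_\ell(z) \succeq 0$ and the hypothesis reads $\deg(pq) \leq \ell - 1$. The key identity I plan to invoke throughout is that since $H$ is PSD, for any $f \in \re[x]_\ell$ the conditions ``$f \in \ker H$'', ``$\mathscr{L}_z(f^2) = 0$'', and ``$\mathscr{L}_z(fg) = 0$ for every $g \in \re[x]_\ell$'' are equivalent.

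Expanding $q = \sum_\gm q_\gm x^\gm$, I observe that by linearity of the kernel it is enough to show $p \, x^\gm \in \ker H$ for each monomial $x^\gm$ appearing in $q$; each such monomial automatically satisfies $\deg(p \, x^\gm) \leq \deg(pq) \leq \ell - 1$. I would then induct on $s := |\gm|$. The case $s = 0$ is trivial. For $s = 1$, say $\gm = e_i$, the hypothesis forces $\deg(p) \leq \ell - 2$, whence $\deg(p x_i^2) \leq \ell$, and
\[
\mathscr{L}_z\big((p x_i)^2\big) \;=\; \mathscr{L}_z\big(p \cdot (p x_i^2)\big) \;=\; 0
\]
because $p \in \ker H$. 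Combined with $H \succeq 0$ and $\deg(p x_i) \leq \ell$, this yields $p x_i \in \ker H$.

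For $s \geq 2$, I would choose any $i$ with $\gm_i \geq 1$ and set $p' := p \, x^{\gm - e_i}$, whose degree is at most $\ell - 2$. The inductive hypothesis (applied to the strictly lower-degree monomial $x^{\gm - e_i}$) gives $p' \in \ker H$, after which the $s = 1$ base case, applied with $p'$ in the role of $p$ and $x_i$ as the multiplier, delivers $p' x_i = p \, x^\gm \in \ker H$. The only real obstacle here is degree bookkeeping: the hypothesis $\deg(pq) \leq \ell - 1$ provides exactly the one degree of slack needed for $\deg(p x_i^2) \leq \ell$ in the base case, and this same slack propagates through the induction; if only $\deg(pq) \leq \ell$ were assumed, the auxiliary polynomial $p x_i^2$ would have degree $\ell + 1$ and the PSD/kernel property of $H$ could no longer be invoked.
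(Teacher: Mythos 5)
Your proof is correct, and there are no gaps. Note, though, that the paper itself does not supply an argument for this lemma: it sets $z = h\ast w$, observes $M_{k-\lceil \deg(h)/2\rceil}(z)\succeq 0$, and then defers entirely to Lemma~3.5 of Lasserre--Laurent--Rostalski (equivalently Lemma~5.7 of Laurent's survey, or Theorem~7.5 of Curto--Fialkow). So what you have written is a self-contained reconstruction of the standard argument behind that citation rather than a genuinely different route. The essential content is exactly what you isolate: for the PSD matrix $H = M_\ell(z)$ the conditions $f\in\ker H$, $\mathscr{L}_z(f^2)=0$, and $\mathscr{L}_z(fg)=0$ for all $g\in\re[x]_\ell$ are equivalent, and the identity $\mathscr{L}_z\big((px_i)^2\big)=\mathscr{L}_z\big(p\cdot(px_i^2)\big)$ trades the square on $px_i$ for a pairing of $p$ against the degree-$\leq\ell$ polynomial $px_i^2$, which vanishes since $p\in\ker H$. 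Your degree bookkeeping is also the right point to emphasize: the hypothesis $\deg(pq)\leq \ell-1$ rather than $\leq\ell$ is precisely what keeps $px_i^2$ inside $\re[x]_\ell$ at every stage, and the reduction to monomials is harmless because $\deg(p\,x^\gamma)\leq\deg(p)+\deg(q)=\deg(pq)$ for every monomial $x^\gamma$ occurring in $q$. The one thing your write-up buys over the paper's is that the reader need not chase the reference; the one thing it costs is length, which is presumably why the authors chose to cite.
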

\begin{proof}
Let $z=h\ast w$. Then $z$ is a tms in $\mathscr{M}_{n,2k-\deg(h)}$
and the moment matrix $M_{k-\lceil \deg(h)/2 \rceil}(z)\succeq 0$.
The conclusion is implied by Lemma~3.5 of \cite{LLR08}
(also see Lemma~5.7 of \cite{Lau} or Theorem 7.5 of \cite{CF96}).
\end{proof}

%

\subsection{Quadratic module, preordering and semidefinite programming}

For the semialgebraic set $K$ defined in \reff{def:K},
its $k$-th \df{truncated quadratic module $Q_k(K)$} and
\df{truncated preordering $Pr_k(K)$} are respectively defined as
\be \label{qmod:K}
Q_k(K) :=
\left\{
\left. \sum_{i=0}^m g_i \sig_i \right|
\baray{c}
\mbox{ each } \deg(\sig_ig_i) \leq 2k \\
\mbox{ and } \sig_i \mbox{ is SOS}
\earay
\right\},
\ee
\be \label{pre:K}
Pr_k(K) :=
\left\{
\left.\sum_{ \nu \in \{0,1\}^m }^m \sig_\nu g_\nu \right|
\baray{c}
\deg(\sig_\nu g_\nu) \leq 2k \\ \mbox{ each } \sig_\nu \mbox{ is SOS}
\earay
\right\}.
\ee
($g_\nu := g_1^{\nu_1}\cdots g_m^{\nu_m}$.)
The quadratic module and preordering of $K$ are then defined respectively as
\[
Q(K) = \bigcup_{k\geq 0} Q_k(K), \qquad
Pr(K) = \bigcup_{k\geq 0} Pr_k(K).
\]
The definitions of $Q_k(K)$ and $Pr_k(K)$ depend on the set of
defining polynomials $g_1,\ldots, g_m$, which are not unique for $K$.
Throughout the paper, when $Q_k(K)$ or $Pr_k(K)$ is used,
we assume that $g_1,\ldots, g_m$ are clear in the context.
In \reff{def:K}, if $K$ is defined by using polynomial equalities, like $h(x)=0$,
then it can be replaced by two inequalities $h(x)\geq 0$ and $-h(x)\geq 0$.

\begin{theorem}\label{thm:PutSch}
Let $K$ be as in \reff{def:K} and $p\in \re[x]$ be strictly positive on $K$.

\bit
\item [(i)] (Schm\"{u}dgen, \cite{Smg}) If $K$ is compact, then we have $p \in Pr(K)$.

\item [(ii)] (Putinar, \cite{Put}) If the archimedean condition holds for $K$
(a set $\{x:q(x)\geq 0\}$ is compact for some $q \in Q(K)$),
then we have $p \in Q(K)$.
\eit
\end{theorem}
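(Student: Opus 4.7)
The plan for both parts is the classical Hahn--Banach separation plus Riesz representation strategy. Suppose for contradiction that $p$ is strictly positive on $K$ but $p \notin Q(K)$ in case (ii) (respectively $p \notin Pr(K)$ in case (i)). Since the cone is convex and, as one argues, closed in an appropriate locally convex topology on $\re[x]$, Hahn--Banach separation produces a nonzero linear functional $L:\re[x] \to \re$ satisfying $L \geq 0$ on the cone and $L(p) \leq 0$. After normalizing $L(1) = 1$, the objective is to represent $L$ as integration against a probability measure $\mu$ supported on $K$, for then $L(p) = \int_K p\, \mathit{d}\mu > 0$ would contradict $L(p) \leq 0$.

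For part (ii), the archimedean hypothesis is the engine of the argument. By assumption some $q \in Q(K)$ has $\{q \geq 0\}$ compact, and a short computation upgrades this to the stronger statement that for every $f \in \re[x]$ there is $N_f \in \N$ with $N_f \pm f \in Q(K)$. The $Q(K)$-positivity of $L$ then forces $|L(f)| \leq N_f$, so $L$ is bounded on each $\re[x]_d$. I would then build an inner product $\langle f, g\rangle := L(fg)$, quotient by its kernel, complete to a Hilbert space, and invoke a GNS-type construction to obtain commuting self-adjoint multiplication operators $X_1,\ldots,X_n$ whose joint spectrum is compact thanks to the archimedean bound. The joint spectral measure pushes forward to a probability measure $\mu$ on $\re^n$ that represents $L$, and the inequalities $L(g_i \sigma) \geq 0$ for all SOS $\sigma$ force $\supp{\mu} \subseteq K$.

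For part (i), the cleanest route is a reduction to (ii). Compactness of $K$ yields $R>0$ with $K \subseteq \{\|x\|_2^2 \leq R^2\}$, so one may adjoin $R^2-\|x\|_2^2$ as a redundant generator without changing $K$. With this added generator the quadratic module is manifestly archimedean, so Putinar's theorem applies to the enlarged system. The remaining bookkeeping task is to rewrite the resulting representation inside $Pr(K)$ for the \emph{original} generators. This uses that $Pr(K)$ already contains all products $g_\nu = g_1^{\nu_1}\cdots g_m^{\nu_m}$, together with a compactness-based combinatorial identity (the content of Schm\"udgen's original argument, via an iterative SOS partition tied to the faces of $K$) that expresses $R^2 - \|x\|_2^2$ itself as a preordering element built from the original $g_i$.

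The main obstacle, and the technical heart of the whole argument, is the measure-representation step in (ii): one must (a) pick a topology on $\re[x]$ in which $Q(K)$ is closed so that Hahn--Banach actually separates, (b) extend $L$ from polynomials to $\mathscr{C}(K)$ densely and continuously, and (c) verify the support condition $\supp{\mu}\subseteq K$. Each step hinges on the archimedean property; without it the spectral bound and the closedness of the cone both fail, and the whole structure collapses. The Schm\"udgen reduction in (i) then reuses this machinery once the auxiliary ball inequality has been folded in.
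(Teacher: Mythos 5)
The paper does not prove this statement at all: Theorem~\ref{thm:PutSch} is quoted as a known classical result, with the proofs delegated to Schm\"udgen \cite{Smg} and Putinar \cite{Put}. So there is no in-paper argument to compare against; your proposal has to stand on its own as a proof of the cited theorems, and as it stands it does not, because the two steps you label as routine are in fact where the entire difficulty lives.

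First, in part (ii) you claim that ``a short computation'' upgrades the hypothesis (some $q\in Q(K)$ has $\{q\geq 0\}$ compact) to the boundedness property $N_f\pm f\in Q(K)$ for all $f$. It is not short: the standard route is to note that $\{q\geq 0\}$ is contained in a ball, so $N-\|x\|_2^2>0$ on $\{q\geq 0\}$, and then to invoke Schm\"udgen's theorem (or Stengle's Positivstellensatz plus W\"ormann's trick) for the \emph{single-constraint} set $\{q\geq 0\}$ to conclude $N-\|x\|_2^2\in\Sigma+q\Sigma\subseteq Q(K)$; only after that is the induction giving $N_f\pm f\in Q(K)$ genuinely elementary. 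So with the archimedean condition phrased as in the paper, part (ii) secretly rests on part (i), and your sketch hides that dependence. Second, in part (i) the ``remaining bookkeeping task'' of expressing $R^2-\|x\|_2^2$ as an element of $Pr(K)$ \emph{is} Schm\"udgen's theorem --- it is precisely the assertion that $Pr(K)$ is archimedean for compact $K$, proved either by Schm\"udgen's operator-theoretic solution of the moment problem or by W\"ormann's algebraic argument from the Positivstellensatz. Your description of it as ``a compactness-based combinatorial identity \dots via an iterative SOS partition tied to the faces of $K$'' does not correspond to any actual argument, so part (i) is asserted rather than proved. (A smaller point: for the separation step you do not need, and should not claim, closedness of $Q(K)$ in some topology; the standard lemma separates any point from any convex cone in a countable-dimensional real vector space, and it is this lemma, not a closedness statement, that the classical proofs use.) The GNS/spectral-measure portion of your outline for (ii) is the correct architecture, but as a whole the proposal is an annotated reading list for the two theorems rather than a proof of them.
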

\index{Putinar's Theorem}
\index{Schmudgen's Theorem}

\begin{theorem}[Real Nullstellensatz]
\label{RealNul}
Let $K$ be defined in \reff{def:K}.
If $f \in \re[x]$ vanishes identically on $K$, then
$-f^{2\ell} \in Pr(K)$ for some integer $\ell \geq 1$.
\end{theorem}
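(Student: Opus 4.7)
The plan is to deduce the statement from Stengle's Positivstellensatz (specifically its infeasibility form), which is the classical route to the Real Nullstellensatz. The key trick is to convert the hypothesis ``$f$ vanishes identically on $K$'' into an emptiness statement that Stengle's result can translate back into an algebraic identity.

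First I would observe that ``$f \equiv 0$ on $K$'' is equivalent to the semialgebraic system
\[
g_1(x)\geq 0,\ \ldots,\ g_m(x)\geq 0,\ f(x)\neq 0
\]
having no real solution: an $x$ satisfies all of $g_i\geq 0$ precisely when $x\in K$, in which case $f(x)=0$ by hypothesis, contradicting $f(x)\neq 0$. Then I would invoke Stengle's Positivstellensatz in the following form: a system of the shape $\{g_1\geq 0,\ldots,g_m\geq 0,\ h\neq 0\}$ is infeasible over $\re^n$ if and only if there exist $\sigma\in Pr(K)$ (the preordering generated by $g_1,\ldots,g_m$) and a positive integer $\ell$ satisfying the polynomial identity $\sigma + h^{2\ell} = 0$ in $\re[x]$. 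Applying this with $h=f$ yields $-f^{2\ell} = \sigma \in Pr(K)$, which is exactly the desired conclusion.

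The main obstacle is that Stengle's Positivstellensatz is itself a substantial theorem, classically proved via the Tarski--Seidenberg transfer principle or Krivine's abstract Positivstellensatz for preorderings in commutative rings. Since it is a standard cited result, however, the present theorem reduces to a single application once that machinery is in place. It is worth noting that the compactness assumption used elsewhere in the paper plays no role here: in particular, Schm\"{u}dgen's Theorem~\ref{thm:PutSch}(i) does not suffice even when $K$ is compact, because for each $\eps>0$ it only produces a representation $\eps-f^2 \in Pr(K)$ with no uniform bound on the degree of the SOS representers as $\eps\to 0^+$, so one cannot simply let $\eps\to 0$ to obtain the Nullstellensatz.
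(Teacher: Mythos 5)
Your proposal is correct and follows essentially the same route as the paper: the paper offers no proof of Theorem~\ref{RealNul}, only the remark that it is a special case of Stengle's Positivstellensatz \cite{Stg74}, and your reduction (infeasibility of the system $g_1\geq 0,\ldots,g_m\geq 0,\ f\neq 0$, hence an identity $\sigma+f^{2\ell}=0$ with $\sigma\in Pr(K)$) is exactly the intended specialization. The only cosmetic point is that the monoid form of the Positivstellensatz also allows the exponent $\ell=0$ (i.e., $-1\in Pr(K)$, which occurs when $K=\emptyset$), but this is upgraded to $\ell\geq 1$ by multiplying through by $f^2$, so your statement with a positive $\ell$ is harmless.
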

\noindent {\it Remark:}
Theorem~\ref{RealNul} is a special case of the so-called
Positivstellensatz \cite{Stg74}, and the set $K$ there
does not need to be compact.

Let $I(K)$ be the vanishing ideal of $K$, i.e.,
\[
I(K) :=    \{ h\in \re[x]\,:\, h(u)=0 \,\,\,\forall \,\, u \in K\}.
\]
By Theorem~\ref{RealNul}, if $f \in I(K)$, then
$-f^{2\ell} \in Pr_k(K)$ for some $k,\ell$.
This fact will be used in our proofs later.

The sets $Q_k(K)$ and $Pr_k(K)$ are convex cones.
Their dual cones lie in the space $\mathscr{M}_{n,2k}$.
The dual cone of $Pr_k(K)$ is defined as
\[
Pr_k(K)^* = \{ y \in \mathscr{M}_{n,2k}: \,
\langle p, y\rangle \geq 0 \,\,\, \forall p \in Pr_k(K)\}.
\]
The dual cone $Q_k(K)^*$ is defined similarly.
It is known that (cf. \cite{Las01, LasBok})
\begin{align*}
Q_k(K)^* &= \left\{ y \in \mathscr{M}_{n,2k}: \, M_{k-d_i}(g_i\ast y) \succeq 0\,\,\,
\mbox{for } \, i=0,1,\ldots, m \right\}, \\
Pr_k(K)^* &= \left\{ y \in \mathscr{M}_{n,2k}: \,
M_{k-d_\nu}(g_\nu \ast y) \succeq 0 \,\, \,
\forall \, \nu \in \{0,1\}^m \right\}.
\end{align*}
($d_\nu := \lceil \deg(g_\nu)/2 \rceil$.)
Given tms' $c,a_1,\ldots, a_t \in \mathscr{M}_{n,d}$
and scalars $b_1,\ldots,b_t$, consider the
linear conic optimization problem
\be \label{sdp:pro}
\left\{
\baray{rl}
\underset{p}{\min} & \langle p, c \rangle \\
s.t. &  \langle p, a_i \rangle  = b_i \, (1\leq i \leq t),
\quad \  p \in \re[x]_d \cap Pr_k(K).
\earay \right.
\ee
Its dual optimization problem is
\be  \label{sdp:mom}
\left\{
\baray{rl}
\underset{y \in \mathscr{M}_{n,2k}}{\max} & b_1\lmd_1 + \cdots + b_t \lmd_t \\
s.t. & w|_d = c - \lmd_1 a_1 - \cdots - \lmd_t a_t, \\
& M_{k-d_\nu}(g_\nu \ast w) \succeq 0\, \forall \nu \in \{0,1\}^m.
\earay\right.
\ee
The optimization problems \reff{sdp:pro} and \reff{sdp:mom} are
reducible to SDP problems (cf. \cite{Las01,LasBok,Lau}).
Any objective value of a feasible solution of  \reff{sdp:pro} (resp., \reff{sdp:mom})
is an upper bound (resp., lower bound) for the optimal value of the other one
(this is called \df{weak duality}).
If one of them has an interior point
(for \reff{sdp:pro} it means that there is a feasible $p$
lying in the interior of $\re[x]_d \cap Pr_k(K)$,
and for \reff{sdp:mom} it means that there is a feasible
$w$ satisfying every $M_{k-d_\nu}(g_\nu \ast w) \succ 0$),
then the other one has an optimizer and they have the same optimal value
(this is called \df{strong duality}).
Similar is true if $Pr_k(K)$ is replaced by $Q_k(K)$.
We refer to \cite{Las01,LasBok,Lau} for properties of SDPs arising
from moment problems and polynomial optimization.
In \cite{Las08} Lasserre proposed a  semidefinite programming approach
for solving the generalized problem of moments.
We refer to \cite{WSV} for more about SDP.

\section{Checking Existence of Representing Measures}
\label{sec:TKMPcompact}
\setcounter{equation}{0}

This section discusses TKMPs when $K$ is
a compact semialgebraic set defined in \reff{def:K}.
It gives a semidefinite approach for
checking if a tms admits a representing measure
and proves its properties alluded to in \S \ref{sec:contrib}(1).

\subsection{A certificate via semidefinite programming}

Our semidefinite approach for solving TKMPs exploits
the following basic fact.

\begin{pro}  \label{certf:meas-K}
Let $y$ be a tms in $\mathscr{M}_{n,d}$ and
$K$ be a compact set defined by \reff{def:K}.
Then $y$ admits no $K$-measure if and only if there exists
a polynomial $p$ such that
\[ \langle p, y \rangle < 0, \quad  p \in \re[x]_d \cap Pr(K). \]
Under the archimedean condition for $K$, the above is also true
if $Pr(K)$ is replaced by $Q(K)$.
\end{pro}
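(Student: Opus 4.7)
The proposition is essentially a Hahn--Banach-style separation statement for the convex cone $\mathscr{R}_d(K)$, reformulated via explicit Positivstellensatz certificates. Given the tools already assembled, namely Tchakaloff's Theorem~\ref{thm:Tchak} and the Schm\"udgen/Putinar theorems (Theorem~\ref{thm:PutSch}), the proof is quite short, so my plan is to give one direction by a direct integration argument and the other by combining Tchakaloff with a small perturbation to invoke Schm\"udgen.

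For the easy direction (``if''), suppose such a $p \in \re[x]_d \cap Pr(K)$ exists with $\langle p, y \rangle < 0$. Every element of $Pr(K)$ is a sum of terms of the form $\sigma_\nu g_\nu$ with each $\sigma_\nu$ SOS, hence is nonnegative on $K$. If $y$ admitted a $K$-measure $\mu$, then $\langle p, y\rangle = \mathscr{L}_y(p) = \int_K p\, d\mu \geq 0$, a contradiction. So $y$ admits no $K$-measure.

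For the harder direction (``only if''), suppose $y \notin \mathscr{R}_d(K)$. Since $K$ is compact, Tchakaloff's Theorem~\ref{thm:Tchak} implies that $\mathscr{L}_y$ is not $K$-positive: there exists $q \in P_d(K)$ with $\mathscr{L}_y(q) < 0$. Because $\mathscr{L}_y(q + \eps) = \mathscr{L}_y(q) + \eps\, y_0$ depends continuously on $\eps$, we may choose $\eps > 0$ small enough that $\mathscr{L}_y(q+\eps) < 0$. The polynomial $q + \eps$ is strictly positive on $K$ (since $q \geq 0$ there and $\eps > 0$), so Schm\"udgen's theorem (Theorem~\ref{thm:PutSch}(i)) yields $q + \eps \in Pr(K)$. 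Set $p := q + \eps$; then $\deg(p) \leq d$, so $p \in \re[x]_d \cap Pr(K)$, and $\langle p, y \rangle < 0$, as desired. For the archimedean variant, the same argument goes through verbatim with Putinar's theorem (Theorem~\ref{thm:PutSch}(ii)) replacing Schm\"udgen's, producing $p \in \re[x]_d \cap Q(K)$.

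The only step requiring mild care is the perturbation $q \mapsto q + \eps$: we must make the polynomial strictly positive on $K$ in order to apply the Positivstellensatz, while simultaneously preserving the strict negativity of the Riesz functional evaluation. This is painless because compactness forces $y_0 \in \re$ (so $\eps\, y_0$ is $O(\eps)$) and the strict inequality $\mathscr{L}_y(q) < 0$ is stable under small perturbations; I anticipate no substantive obstacle beyond keeping the degree bookkeeping straight (observing that while the SOS multipliers in a $Pr(K)$-representation of $p$ may have arbitrarily high degree, it is only the polynomial $p$ itself that is constrained to lie in $\re[x]_d$).
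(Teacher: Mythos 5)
Your proof is correct and follows essentially the same route as the paper's: the ``if'' direction by integrating a $Pr(K)$ element against a hypothetical representing measure, and the ``only if'' direction by invoking Tchakaloff's Theorem to produce $q \in P_d(K)$ with $\mathscr{L}_y(q) < 0$, perturbing to $q + \eps$ to gain strict positivity on $K$, and applying Schm\"udgen (resp.\ Putinar) to certify membership in $Pr(K)$ (resp.\ $Q(K)$). Your closing remark about the degree bookkeeping --- that only $p$ itself, not its SOS multipliers, must lie in $\re[x]_d$ --- is a correct and worthwhile observation that the paper leaves implicit.
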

\begin{proof}
(i) The ``if'' direction is obvious. It suffices to prove the ``only if'' direction.
Suppose $y$ does not belong to $\mathscr{R}_d(K)$. Then, by Theorem~\ref{thm:Tchak},
the Riesz functional $\mathscr{L}_y$ must achieve a negative value on $P_d(K)$,
say $\hat{p} \in P_d(K)$, such that $\mathscr{L}_y(\hat{p}) <0$.
Then, for a small enough $\eps>0$, the polynomial $p:=\hat{p}+\eps$
also satisfies $\mathscr{L}_y(p) <0$.
Since $p$ is strictly positive on the compact set $K$,
by Theorem~\ref{thm:PutSch}(i), we have $p\in Pr(K)$.
This $p$ satisfies $\langle p, y\rangle <0$.
The proof is same when the archimedean condition holds
(applying Theorem~\ref{thm:PutSch}(ii)).
\end{proof}

In the following, we show how to apply Proposition~\ref{certf:meas-K}
to check whether a tms $y$ belongs to $\mathscr{R}_d(K)$ or not.
Choose a tms $\xi \in \mathscr{R}_d(K)$ such that
$\mathscr{L}_{\xi}$ is strictly $K$-positive.
For an integer $k\geq d/2$, consider the optimization problem
\be  \label{min<p,y>:Pr}
\tilde{\lmd}_k:= \, \underset{p}{\min} \quad \langle p, y \rangle \quad
s.t. \quad  \langle p, \xi \rangle  = 1, \, p \in \re[x]_d \cap Pr_k(K).
\ee
Its dual optimization problem is ($d_\nu := \lceil \deg(g_\nu)/2 \rceil$)
\be  \label{max-lmd:extn-y}
\left\{
\baray{rl}
\lmd_k := \, \underset{\lmd \in \re, w \in
 \mathscr{M}_{n,2k} }{\max} & \lmd \\
s.t. \qquad &  w|_d =y - \lmd \xi, \\
& M_{k-d_\nu}(g_\nu \ast w) \succeq 0 \,\, \forall \nu \in \{0,1\}^m.
\earay \right.
\ee
Both the primal \reff{min<p,y>:Pr} and dual \reff{max-lmd:extn-y}
are reducible to SDP problems,
and they are parameterized by an order $k$.
They can be solved efficiently,
e.g., by software {\tt GloptiPoly} \cite{GloPol3} and {\tt SeDuMi} \cite{sedumi}.

A nonnegative Borel measure $\mu$ is called \df{$(K,d)$-semialgebraic}
if $\supp{\mu} \subseteq K$ and there exists
$q \in \re[x]_d \cap Pr(K)$ such that
\[
\supp{\mu} \, \subseteq \, \big\{ u \in K:\, q(u) =0 \big\},
\quad q \not\equiv 0 \mbox{ on } K.
\]
If a measure $\mu$ is not $(K,d)$-semialgebraic, then
\[
\supp{\mu} \subseteq K \cap \mc{Z}(q),\,
q \in \re[x]_d \cap Pr(K)
\quad \Longrightarrow \quad q \equiv 0 \mbox{ on } K.
\]
Note that not every $K$-measure is $(K,d)$-semialgebraic.
For instance, for $K$ being the unit ball,
the probability measure uniformly distributed on $K$
is not $(K,d)$-semialgebraic,
because every polynomial vanishing on it must be identically zero.
So, being $(K,d)$-semialgebraic is a bit restrictive for a measure $\mu$,
as it implies $\supp{\mu}$ has Lebesgue measure zero when $K$ has nonempty interior.

\begin{theorem} \label{sdp-pr:K-mo}
Assume the set $K$ defined in \reff{def:K} is compact,
the tms $\xi$ belongs to $\mathscr{R}_d(K)$ and
its Riesz functional $\mathscr{L}_\xi$ is strictly K-positive.
Let $y, \lmd_k, \tilde{\lmd}_k$ be as above.
Then the sequence $\{\lmd_k\}$ is monotonically decreasing.
Let $\lmd_\infty := \lim_{k\to \infty} \lmd_k \in \re\cup \{-\infty\}.$
We also have:

\bit

\item [(i)] For $k$ big enough, $\tilde{\lmd}_k = \lmd_k$
and \reff{max-lmd:extn-y} has a maximizer.

\item [(ii)] The tms $y$ belongs to $\mathscr{R}_d(K)$ if and only if
$\lmd_k \geq 0$ for all $k$.
If $\lmd_\infty > -\infty$, then the shifted tms $\hat{y}:=y - \lmd_\infty \xi$
belongs to $\mathscr{R}_d(K)$.

\item [(iii)] Assume $\mbox{int}(K)\ne \emptyset$.
Then, $\lmd_\infty >-\infty$,
and $\lmd_k = \lmd_\infty$ for some $k$ if and only if
there exists a $(K,d)$-semialgebraic measure $\mu \in meas(\hat{y},K)$.

\item [(iv)] Suppose, for some $k_0$, \reff{max-lmd:extn-y}
has an optimizer $(\lmd_{k_0}, w^*)$ with $\lmd_{k_0} \geq 0$
and $w^*|_t(t\geq d)$ flat.
Then $y$ belongs to $\mathscr{R}_d(K)$ and $\lmd_k = \lmd_{k_0}$ for all $k\geq k_0$.

\eit
\end{theorem}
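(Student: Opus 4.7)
My plan leverages LP/SDP duality between \reff{min<p,y>:Pr} and \reff{max-lmd:extn-y}, together with the separation result Proposition~\ref{certf:meas-K}. Weak duality $\lambda_k\leq\tilde\lambda_k$ is routine: if $p$ is primal feasible and $(\lambda,w)$ dual feasible then $\langle p,w\rangle=\langle p,y\rangle-\lambda\geq 0$ because $p\in Pr_k(K)$ and $w\in Pr_k(K)^*$. Monotonicity of $\tilde\lambda_k$ comes immediately from $Pr_k(K)\subseteq Pr_{k+1}(K)$, and of $\lambda_k$ by noting that a dual feasible $w$ at order $k+1$ restricts (via principal submatrices) to a dual feasible $w|_{2k}$ at order $k$ with the same $\lambda$. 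For part (i), I would argue attainment of the dual maximum from compactness of the dual $w$-slice (the localizing matrix constraints combined with compactness of $K$ give uniform bounds on $w$), and strong duality for large $k$ by exhibiting a Slater point for the primal: Schm\"udgen's Theorem ensures that, for $k$ large, a strictly positive polynomial on $K$ admits a positivity certificate with PD Gram matrices, landing in the relative interior of $Pr_k(K)\cap\re[x]_d$.

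For part (ii) the forward direction is immediate: any $\mu\in meas(y,K)$ supplies a dual feasible $(0,w)$ with $w_\af=\int x^\af\,d\mu$, so $\lambda_k\geq 0$. For the converse I would proceed by contrapositive: if $y\notin\mathscr{R}_d(K)$, Proposition~\ref{certf:meas-K} yields $p\in Pr_{k_0}(K)\cap\re[x]_d$ strictly positive on $K$ with $\langle p,y\rangle<0$; strict $K$-positivity of $\mathscr{L}_\xi$ forces $\langle p,\xi\rangle>0$, so $p/\langle p,\xi\rangle$ is primal feasible at order $k_0$ with negative objective, and monotonicity together with (i) gives $\lambda_k<0$ for all large $k$. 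The claim about $\hat y$ I would establish by proving $\mathscr{L}_{\hat y}$ is $K$-positive and then applying Theorem~\ref{thm:Tchak}: for $q\in P_d(K)$ strictly positive on $K$, Schm\"udgen places $q$ in $Pr_k(K)$ for some $k$, hence $q/\langle q,\xi\rangle$ is primal feasible and yields $\langle q,y\rangle/\langle q,\xi\rangle\geq\tilde\lambda_k\geq\lambda_\infty$, i.e., $\langle q,\hat y\rangle\geq 0$; the general case $q\geq 0$ on $K$ follows by replacing $q$ with $q+\eps$ and letting $\eps\to 0^+$.

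For part (iii) I would first prove $\lambda_\infty>-\infty$ by a compactness argument in $\re[x]_d$: the slice $\{p\in P_d(K):\langle p,\xi\rangle=1\}$ must be bounded, since otherwise rescaling a blow-up sequence gives a unit-norm limit $p^*\in P_d(K)$ with $\langle p^*,\xi\rangle=0$; strict $K$-positivity forces $p^*\equiv 0$ on $K$, which contradicts $\|p^*\|=1$ because a polynomial vanishing on the nonempty open set $\mathrm{int}(K)$ is identically zero. The equivalence I would handle via complementary slackness. When $\lambda_k=\lambda_\infty$ is attained, a primal optimizer $p^*$ satisfies $\langle p^*,\hat y\rangle=0$, so integrating against any $\mu\in meas(\hat y,K)$ (available via (ii)) forces $\supp{\mu}\subseteq K\cap\mathcal{Z}(p^*)$ with $p^*\not\equiv 0$ on $K$ (since $\langle p^*,\xi\rangle=1$), exhibiting $\mu$ as $(K,d)$-semialgebraic. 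Conversely, a $(K,d)$-semialgebraic $\mu\in meas(\hat y,K)$ with witness $q\in Pr(K)\cap\re[x]_d$ satisfies $\langle q,\hat y\rangle=0$, and normalizing $q$ against $\xi$ yields a primal feasible point at some order $k$ with objective $\lambda_\infty$, forcing $\tilde\lambda_k=\lambda_\infty$.

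Part (iv) should be the most direct: Theorem~\ref{thm-CF:fec} applied to the flat $w^*|_t$ produces a finitely atomic $K$-measure $\mu$ representing it, hence representing the truncation $w^*|_d=y-\lambda_{k_0}\xi$; since $\lambda_{k_0}\xi\in\mathscr{R}_d(K)$ by the cone property and $\lambda_{k_0}\geq 0$, additivity of $\mathscr{R}_d(K)$ delivers $y\in\mathscr{R}_d(K)$. For the stabilization $\lambda_k=\lambda_{k_0}$ when $k\geq k_0$, I would extend the moment tms of $\mu$ to order $2k$, producing a dual feasible pair with value $\lambda_{k_0}$ at every such $k$, and invoke monotonicity to close the argument. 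The most delicate step I anticipate is part (i)'s strong duality and attainment, which underpins parts (ii) and (iii); once this duality machinery is in place, along with the boundedness argument in (iii) that leans on $\mathrm{int}(K)\neq\emptyset$, the measure-theoretic conclusions follow cleanly from Tchakaloff's Theorem and the Curto--Fialkow flat extension theorem.
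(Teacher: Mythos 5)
Your proposal is correct and follows essentially the same route as the paper: weak/strong duality between \reff{min<p,y>:Pr} and \reff{max-lmd:extn-y} with a primal Slater point supplied by Schm\"udgen's theorem, Proposition~\ref{certf:meas-K} for the converse direction of (ii), the compactness of the normalized slice of $P_d(K)$ (using $\mbox{int}(K)\ne\emptyset$ and strict $K$-positivity of $\mathscr{L}_\xi$) to get $\lmd_\infty>-\infty$ in (iii), complementary slackness for the equivalence in (iii), and Theorem~\ref{thm-CF:fec} plus a feasibility-and-monotonicity squeeze for (iv). The only cosmetic deviation is that you establish $\hat{y}\in\mathscr{R}_d(K)$ by verifying $K$-positivity of $\mathscr{L}_{\hat{y}}$ directly and invoking Theorem~\ref{thm:Tchak}, whereas the paper re-applies the first assertion of (ii) to the shifted sequence; both arguments rest on the same ingredients.
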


\begin{proof}
Since the feasible set of \reff{max-lmd:extn-y}
is shrinking as $k$ increases, the sequence of optimal values $\{\lmd_k\}$
must be monotonically decreasing.
So its limit $\lmd_\infty := \lim_{k\to \infty} \lmd_k$
exists (would possibly be $-\infty$).

(i) Since $K$ is compact, there exists $R>0$ big enough such that,
for all $\af \in \N^n$ with $|\af| \leq d$,
the polynomial $R-1+x^\af$ is positive on $K$ and belongs to
$Pr_k(K)$ for some $k$, by Theorem~\ref{thm:PutSch}(i).
For $\eps>0$ small enough, we also have $R+c_0+(1+c_1)x^\af \in Pr_k(K)$
whenever $|c_0|,|c_1|<\eps$, for all $\af \in \N^n$ with $|\af| \leq d$.
This implies that the polynomial
$\hat{p}:= \sum_{|\af|\leq d} (R+x^\af)$ lies in
the interior of the cone $\re[x]_d\cap Pr_k(K)$.
Thus, for $k$ big enough, the primal problem \reff{min<p,y>:Pr} has a feasible point
lying in the interior of $\re[x]_d \cap Pr_k(K)$.
Hence, the primal and dual optimization problems \reff{min<p,y>:Pr} and \reff{max-lmd:extn-y}
have the same optimal value, and \reff{max-lmd:extn-y} has a maximizer
(see the comments following \reff{sdp:mom}).

(ii) If the tms $y$ belongs to $\mathscr{R}_d(K)$, then $\lmd_k\geq 0$ for all $k$,
because $\lmd=0$ is feasible in \reff{max-lmd:extn-y} for every $k$.
If $y$ does not belong to $\mathscr{R}_d(K)$, by Proposition~\ref{certf:meas-K},
there exists $p\in \re[x]_d \cap Pr_k(K)$
such that $\langle p, y \rangle <0$, for some $k$.
Generally, we can assume $p \not\equiv 0$ on $K$
(otherwise, replace $p$ by $p+\eps$ for a tiny $\eps>0$).
By the strict $K$-positivity of $\xi$,
we can normalize $p$ as $\langle p, \xi \rangle =1$.
Hence, $\lmd_k \leq \tilde{\lmd}_k < 0$.
This shows that the first statement of item (ii) is true.

The second statement of item (ii) is implied by the first one.
This is because, in \reff{max-lmd:extn-y}, if $y$ is replaced by $\hat{y}$,
then the resulting optimal values are all nonnegative.

(iii) By item (i), for big $k$, $\lmd_k=\tilde{\lmd}_k$
is at least the optimal value of
\be  \label{min<p,y>:Pd}
\min_p \quad \langle p, y \rangle
\quad s.t. \quad \langle p, \xi \rangle  = 1,  p \in P_d(K).
\ee
Since $\mbox{int}(K)\ne\emptyset$, $p\not\equiv 0$ on $K$
if and only if $p$ is not identically zero.
Thus, the strict $K$-positivity of $\mathscr{L}_{\xi}$ implies
\be  \label{<p,xi>=ep1}
1 = \langle p, \xi \rangle \geq \eps_1 \|p\|_2, \mbox{  for some  } \eps_1>0.
\ee
So, the feasible set of \reff{min<p,y>:Pd} is compact
and has a finite minimum value $\hat{\lmd}>-\infty$.
By the monotonicity of $\{ \lmd_k \}$, we know
$\lmd_k \geq \hat{\lmd}$ for every $k$
and its limit $\lmd_\infty$ is finite.

``if" direction:
If there is a $(K,d)$-semialgebraic measure $\mu \in meas(\hat{y},K)$,
then for some $k$ we can find $ 0 \ne q \in \re[x]_d \cap Pr_k(K)$ such that
$\supp{\mu}\subseteq K\cap\mc{Z}(q)$. Normalize $q$ as
$\langle q, \xi \rangle = 1$, then
\[
\langle q, y \rangle = \langle q, \hat{y} \rangle + \lmd_\infty
= \int_K q d\mu + \lmd_\infty  = \lmd_\infty.
\]
So, the polynomial $q$ is feasible for \reff{min<p,y>:Pr},
and $\tilde{\lmd}_k \leq \lmd_\infty$.
By weak duality, $\lmd_k \leq \tilde{\lmd}_k \leq \lmd_\infty$.
Since $\lmd_k \geq \lmd_\infty$ for all $k$,
we must have $\lmd_k = \lmd_\infty$.

``only if" direction:  If $\lmd_{k_0} = \lmd_\infty$ for some $k_0$,
then $\lmd_k = \lmd_\infty$ for all $k\geq k_0$. Fix such a $k$.
Since $\mbox{int}(K)\ne\emptyset$, the feasible set of \reff{min<p,y>:Pr}
is compact (by \reff{<p,xi>=ep1}) and has a minimizer $p^* \ne 0$.
As shown in (i), if $k$ is big enough, $\tilde{\lmd}_k = \lmd_k$ and
\reff{max-lmd:extn-y} has a maximizer,
say, $(\lmd_k, w^*)$. Then, for every $\mu \in meas(\hat{y},K)$, it holds that
\[
0 = \langle p^*, y - \lambda_k \xi \rangle=
\langle p^*, \hat{y} \rangle = \int_K p^* \mathit{d}\mu.
\]
Since $p^*$ is nonnegative on $K$, $\supp{\mu} \subseteq \mc{Z}(p^*)$
and $\mu$ is $(K,d)$-semialgebraic.


(iv) Note that $y = w^*|_d + \lmd_{k_0} \xi$ and $\xi$ admits a $K$-measure,
say, $\nu$. If $w^*|_t(t\geq d)$ is flat, then $w^*|_t$
admits a finitely atomic $K$-measure $\mu$,
by Theorem~\ref{thm-CF:fec}. Since $\lmd_{k_0} \geq 0$,
we know $\mu+\lmd_{k_0} \nu$ is a representing $K$-measure for $y$.
So, $y$ belongs to $\mathscr{R}_d(K)$.

Clearly, for every $k$, the projection of the feasible set of \reff{max-lmd:extn-y}
into the $\lmd$-space contains the set
\[
F = \big\{ \lmd: \, y - \lmd \xi \in \mathscr{R}_d(K)  \big\}.
\]
Thus, every optimal value $\lmd_k$ is greater than or equal to the optimal value
\[
\lmd^*  :=  \max \quad \lmd \quad \mbox{ s.t. } \quad \lmd \in F.
\]
So, we know $\lmd_{k_0} \in F$ from the above.
Hence, $\lmd_k \geq \lmd^* \geq \lmd_{k_0}$ for all $k$.
By the decreasing monotonicity of $\{\lmd_k\}$, we know $\lmd_k = \lmd_{k_0}$
for all $k \geq k_0$.
\end{proof}

When $\mbox{int}(K)\ne\emptyset$, as we can see in the proof of item (iii)
of Theorem~\ref{sdp-pr:K-mo}, a measure in $meas(\hat{y},K)$ is $(K,d)$-semialgebraic
if and only if all measures in $meas(\hat{y},K)$ are $(K,d)$-semialgebraic.

In \reff{max-lmd:extn-y},
all the cross products $g_\nu := g_1^{\nu_1}\cdots g_m^{\nu_m}$
are used, which might be inconvenient in applications.
So, we consider a simplified version of \reff{max-lmd:extn-y}:
\be  \label{max-lmd:K-qmod}
\left\{
\baray{rl}
\widehat{\lmd}_k := \, \underset{\lmd\in\re, w \in \mathscr{M}_{n,2k}}{\max} & \lmd \\
s.t. \qquad & w|_d =y - \lmd \xi, \\
& M_{k-d_i}(g_i \ast w) \succeq 0 \,(0\leq i\leq m).
\earay \right.
\ee
The dual optimization problem of \reff{max-lmd:K-qmod} is
\be  \label{min<p,y>:qmod}
\widehat{\lmd}_k^{\prm} := \quad \underset{p}{\min} \quad \langle p, y \rangle \quad
s.t. \quad  \langle p, \xi \rangle  = 1, \, p \in \re[x]_d \cap Q_k(K).
\ee
We have the following analogue to Theorem~\ref{sdp-pr:K-mo}.

\begin{theorem} \label{put-infeas}
Let $K,\xi, y, \hat{y}, \lmd_k, \lmd_\infty$ be same as in Theorem~\ref{sdp-pr:K-mo},
and $\widehat{\lmd}_k, \widehat{\lmd}_k^{\prm}$ be as above.
Suppose the archimedean condition holds for $K$.
Then, the sequence $\{ \widehat{\lmd}_k \}$ is monotonically decreasing.
Let $\widehat{\lmd}_\infty := \lim_{k\to \infty} \widehat{\lmd}_k \in \re \cup \{-\infty\}$.
We also have:
\bit

\item [(i)] For $k$ big enough, $\widehat{\lmd}_k = \widehat{\lmd}_k^{\prm}$
and \reff{max-lmd:K-qmod} has a maximizer.

\item [(ii)] The tms $y$ belongs to $\mathscr{R}_d(K)$ if and only if
$\widehat{\lmd}_k\geq 0$ for all $k$.

\item [(iii)]
It always holds that $\widehat{\lmd}_\infty =\lmd_\infty$.
If $\mbox{int}(K)\ne \emptyset$, then $\widehat{\lmd}_\infty > -\infty$, and
$\widehat{\lmd}_k = \widehat{\lmd}_\infty$ for some $k$ if and only if
there exists $\mu \in meas(\hat{y},K)$ such that
\[
\supp{\mu} \subseteq K \cap Z(q) \quad
\mbox{ for some } q\in \re[x]_d \cap Q(K), \, q|_K \not\equiv 0.
\]

\item [(iv)] Suppose, for some $k_0$, the optimization problem \reff{max-lmd:K-qmod}
has an optimizer $(\hat{\lmd}_{k_0}, w^*)$ with $\lmd_{k_0} \geq 0$
and $w^*|_t(t\geq d)$ flat. Then, $y$ belongs to $\mathscr{R}_d(K)$
and $\hat{\lmd}_k  = \hat{\lmd}_{k_0}$ for all $k\geq k_0$.

\eit
\end{theorem}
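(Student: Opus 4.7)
The plan is to follow the template of the proof of Theorem~\ref{sdp-pr:K-mo}, replacing the preordering $Pr_k(K)$ by the quadratic module $Q_k(K)$ and invoking Putinar's theorem (Theorem~\ref{thm:PutSch}(ii)) wherever Schm\"udgen's theorem was used. Monotonicity of $\{\widehat{\lmd}_k\}$ is immediate since the feasible set of \reff{max-lmd:K-qmod} shrinks as $k$ grows. For item~(i), the same polynomial $\hat{p}=\sum_{|\af|\le d}(R+x^\af)$ used in the proof of Theorem~\ref{sdp-pr:K-mo}(i) lies in the interior of $\re[x]_d\cap Q_k(K)$ for large $k$ by Putinar, so \reff{min<p,y>:qmod} has an interior point; SDP strong duality then delivers $\widehat{\lmd}_k=\widehat{\lmd}_k^{\prm}$ and a maximizer of \reff{max-lmd:K-qmod}. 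For item~(ii), the direction $y\in\mathscr{R}_d(K)\Rightarrow\widehat{\lmd}_k\ge 0$ is trivial ($\lmd=0$ is feasible); the converse uses the archimedean half of Proposition~\ref{certf:meas-K} to produce $p\in\re[x]_d\cap Q(K)$ with $\langle p,y\rangle<0$ and $p\not\equiv0$ on $K$, which after normalization via $\langle\cdot,\xi\rangle$ (positive by strict $K$-positivity of $\mathscr{L}_\xi$) forces $\widehat{\lmd}_k^{\prm}<0$ for some $k$, hence $\widehat{\lmd}_k<0$ by weak duality.

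The crux of item~(iii) is the identity $\widehat{\lmd}_\infty=\lmd_\infty$. Because $Q(K)\subseteq Pr(K)$, the containment $Pr_k(K)^*\subseteq Q_k(K)^*$ gives $\widehat{\lmd}_k\ge\lmd_k$ for each $k$, so $\widehat{\lmd}_\infty\ge\lmd_\infty$. For the reverse inequality I would use a perturbation-and-rescaling trick: given any $p\in\re[x]_d\cap Pr(K)$ with $\langle p,\xi\rangle=1$, the polynomial $p+\eps$ is strictly positive on $K$ for $\eps>0$, hence lies in $Q(K)$ by Putinar; the normalized $q_\eps:=(p+\eps)/(1+\eps\langle 1,\xi\rangle)$ then satisfies $\langle q_\eps,\xi\rangle=1$, $q_\eps\in\re[x]_d\cap Q(K)$, and $\langle q_\eps,y\rangle\to\langle p,y\rangle$ as $\eps\to 0^+$. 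Taking infima gives $\widehat{\lmd}_\infty\le\lmd_\infty$. In particular, $\widehat{\lmd}_\infty>-\infty$ when $\mbox{int}(K)\ne\emptyset$ by Theorem~\ref{sdp-pr:K-mo}(iii). The characterization of finite convergence then transcribes the corresponding argument of Theorem~\ref{sdp-pr:K-mo}(iii) with $Pr$ replaced by $Q$: the ``if'' direction substitutes the normalized $q$ into \reff{min<p,y>:qmod} to obtain $\widehat{\lmd}_k^{\prm}\le\widehat{\lmd}_\infty$; for the ``only if'' direction, a primal minimizer $p^*$ of \reff{min<p,y>:qmod} (whose existence is again secured by \reff{<p,xi>=ep1}, which depends only on $\xi$) satisfies $\int_K p^*\,d\mu=\langle p^*,\hat y\rangle=0$, forcing $\supp{\mu}\subseteq\mc{Z}(p^*)$ and supplying the required $q=p^*$.

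Item~(iv) is nearly identical to Theorem~\ref{sdp-pr:K-mo}(iv). Feasibility of $(\hat{\lmd}_{k_0},w^*)$ in \reff{max-lmd:K-qmod} guarantees $M_{k_0-d_i}(g_i\ast w^*)\succeq 0$ for $i=0,\ldots,m$, so Theorem~\ref{thm-CF:fec} applied to the flat truncation $w^*|_t$ produces a finitely atomic $K$-measure $\mu$; combined with a $K$-measure $\nu$ for $\xi$, the combination $\mu+\hat{\lmd}_{k_0}\nu$ represents $y$, so $y\in\mathscr{R}_d(K)$. The monotonicity chain $\hat{\lmd}_k\ge\hat{\lmd}^*\ge\hat{\lmd}_{k_0}$, where $\hat{\lmd}^*:=\max\{\lmd:y-\lmd\xi\in\mathscr{R}_d(K)\}$, combined with decreasing $\{\hat{\lmd}_k\}$ then yields $\hat{\lmd}_k=\hat{\lmd}_{k_0}$ for $k\ge k_0$. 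The main obstacle across the whole theorem is establishing $\widehat{\lmd}_\infty=\lmd_\infty$; once this identification is in place, every remaining step is a careful substitution of $Q_k(K)$ and Putinar for $Pr_k(K)$ and Schm\"udgen in the earlier arguments.
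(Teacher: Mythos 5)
Your proposal is correct, and for the monotonicity claim and items (i), (ii), (iv) it coincides with the paper's proof (which simply transcribes the corresponding arguments of Theorem~\ref{sdp-pr:K-mo} with $Q_k(K)$ and Putinar in place of $Pr_k(K)$ and Schm\"udgen). The one place where you genuinely diverge is the crux you correctly identify: the identity $\widehat{\lmd}_\infty=\lmd_\infty$ in item (iii). The paper argues by contradiction on the dual (moment) side: assuming $\widehat{\lmd}_\infty>\lmd_\infty$, it sets $\tilde{y}:=y-\widehat{\lmd}_\infty\xi$ and observes that the $Q$-hierarchy applied to $\tilde y$ has all optimal values $\widehat{\lmd}_k-\widehat{\lmd}_\infty\geq 0$, so $\tilde y\in\mathscr{R}_d(K)$ by item (ii), while the $Pr$-hierarchy gives $\lmd_k-\widehat{\lmd}_\infty<0$ for large $k$, so $\tilde y\notin\mathscr{R}_d(K)$ by Theorem~\ref{sdp-pr:K-mo}(ii) --- a contradiction. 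You instead work on the primal (polynomial) side, perturbing any $Pr$-feasible $p$ to $p+\eps>0$ on $K$, invoking Putinar to place it in $Q(K)$, renormalizing, and letting $\eps\to 0^+$. Your route is more direct and constructive, but it does lean on the identification of $\lmd_\infty$ and $\widehat{\lmd}_\infty$ with the infima of the primal problems \reff{min<p,y>:Pr} and \reff{min<p,y>:qmod} over all $k$, which requires the strong-duality statements in the items (i) of both theorems; the paper's contradiction argument needs only weak duality plus the two membership criteria (ii). Both are sound, and the remainder of your item (iii) (the finite-convergence characterization via a primal minimizer $p^*$ with $\langle p^*,\hat y\rangle=0$) matches the paper's.
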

\noindent{\it Remark:}
By Theorem~\ref{put-infeas} (ii), if $y$ does not belong to $\mathscr{R}_d(K)$,
then $\widehat{\lmd}_k < 0$ for some $k$,
and for $\lmd = 0$ there is no $w \in \mathscr{M}_{n,2k}$ satisfying
\be \label{extn-infea}
w|_d =y, \, M_k(w) \succeq 0, \,
M_{k-d_1}(g_1 \ast w) \succeq 0, \, \ldots, \, M_{k-d_m}(g_m \ast w)  \succeq 0.
\ee
So, we have $y \not\in \mathscr{R}_d(K)$ if and only if
\reff{extn-infea} is infeasible for some $k$.

\begin{proof}[Proof of Theorem~\ref{put-infeas}]
The decreasing monotonicity of $\{ \widehat{\lmd}_k \}$
holds because the feasible set of \reff{max-lmd:K-qmod}
shrinks as $k$ increases.

(i)
As in Theorem~\ref{sdp-pr:K-mo}, under the archimedean condition, for $k$ big enough,
we can similarly show that \reff{min<p,y>:qmod} has a feasible point
lying in the interior of the cone $\re[x]_d \cap Q_k(K)$.
Thus, \reff{max-lmd:K-qmod} and \reff{min<p,y>:qmod}
have the same optimal value, i.e., $\widehat{\lmd}_k = \widehat{\lmd}_k^{\prm}$,
and \reff{max-lmd:K-qmod} has a maximizer when $k$ is sufficiently large.

(ii) This can be proved in a way similar to item (ii) of Theorem~\ref{sdp-pr:K-mo},
under the archimedean condition on $K$.

(iii)
We first show that $\widehat{\lmd}_\infty =\lmd_\infty$.
Clearly, $\widehat{\lmd}_k \geq \lmd_k$ for all $k$.
Thus $\widehat{\lmd}_\infty \geq \lmd_\infty$.
If $\widehat{\lmd}_\infty  = -\infty$, then we are done.
If $\widehat{\lmd}_\infty  > -\infty$, we need to show
$\widehat{\lmd}_\infty = \lmd_\infty$.
If not, seeking a contradiction, we suppose otherwise
$\widehat{\lmd}_\infty > \lmd_\infty$.
Let $\tilde{y}:=y - \widehat{\lmd}_\infty \xi$.
If we replace $y$ by $\tilde{y}$ in \reff{max-lmd:K-qmod},
then all its optimal values are nonnegative,
and thus we have $\tilde{y} \in \mathscr{R}_d(K)$ by item (ii).
However, by Theorem~\ref{sdp-pr:K-mo} (ii), we get
$\tilde{y} \not\in \mathscr{R}_d(K)$, a contradiction.
This is because if we replace $y$ by $\tilde{y}$ in \reff{max-lmd:extn-y},
then its optimal value is $\lmd_k - \widehat{\lmd}_\infty$
which is negative for big $k$.
So, we must have $\widehat{\lmd}_\infty = \lmd_\infty$.

The second statement of item (iii) can be shown in a similar way
as for item (iii) of Theorem~\ref{sdp-pr:K-mo}.
The only difference is to apply Proposition~\ref{certf:meas-K}
with $Q(K)$, under the archimedean condition on $K$.

(iv) The proof is the same as for item (iv) of Theorem~\ref{sdp-pr:K-mo}.
\end{proof}

\begin{exm} \label{no-mea-ball}
Consider the tms $y \in \mathscr{M}_{2,6}$
\footnote{Throughout the paper, the moments of a tms
are listed in the graded lexicographical ordering,
and moments of different degrees are separated by semicolons.}:
\[
(28;  0,  0;  1.1,  0,    3.4;  0,   0,   0,  0;
1.1,   0,  1.2,  0,  1.6;  0,   0,   0,    0,   0,
0;  28,  0,  3.4,  0,   1.6,   0,  1.2).
\]
Its $3$rd order moment matrix $M_3(y)$ is positive definite.
Let $K=\{x\in \re^2: \|x\|_2^2 \leq 25\}$.
Choose $\xi$ to be the tms in $\mathscr{M}_{2,6}$ induced by the probability measure
uniformly distributed on the unit ball.
For $k = 3, 4, 5$, we solve $\reff{max-lmd:K-qmod}$
(which is same as \reff{max-lmd:extn-y},
since $K$ is defined by a single inequality) using {\tt GloptiPoly} \cite{GloPol3},
and get optimal values $\lmd_k$ numerically as
\[
\lmd_3 \approx 0.3702 >0, \quad \lmd_4 \approx 0.0993 >0, \quad \lmd_5 \approx -0.2370 < 0.
\]
By Theorem~\ref{sdp-pr:K-mo} or \ref{put-infeas},
we know this tms does not admit a $K$-measure.
\qed
\end{exm}

By Theorem~\ref{sdp-pr:K-mo}, when $K$ is compact,
a tms $y$ belongs to $\mathscr{R}_d(K)$ if and only if
$\lmd_k$ is nonnegative for all $k$,
which is difficult to check in applications.
Similarly, it is also difficult to check $\hat{\lmd}_k \geq 0$ for all $k$.
However, the items (iv) of Theorems~\ref{sdp-pr:K-mo} and \ref{put-infeas}
provide a certificate for checking the membership $y\in \mathscr{R}_d(K)$.
This is because we can easily check the flat extension condition \reff{cond:FEC} as follows.
Let $(\lmd_, w^*)$ be an optimal pair for \reff{max-lmd:extn-y} or \reff{max-lmd:K-qmod}.
If a truncation $w^*|_t(t \geq d)$ is flat and $\lmd_k \geq 0$,
a $K$-measure representing $y$ can be constructed from the
relation $y = w^*|_d + \lmd_k \xi$,
because the tms $\xi$ admits a measure by its choice.
If we have $\mu \in meas(w^*,K)$ and $\nu \in meas(\xi,K)$,
then $\mu+\lmd_k \nu$ belongs to $meas(y,K)$.

\begin{exm} \label{exmp:(2,6)-10pt}
Consider the tms $y \in \mathscr{M}_{2,6}$:
\[
\baray{c}
            (1;\,
         7/50,\,
         1/50;\,
          2/5,\,
            0,\,
          2/5;\,
      91/1250,\,
       -6/625,\,
       42/625,\,
      37/1250;\, \\
   6973/25000,\,
     -42/3125,\,
   1777/25000,\,
      42/3125,\,
   6973/25000;\,
   1267/31250,\, \\
   -222/15625,\,
    504/15625,\,
     72/15625,\,
    546/15625,\,
    781/31250;\,
 23713/100000,\, \\
     -42/3125,\,
  2929/100000,\,
            0,\,
  2929/100000,\,
      42/3125,\,
 23713/100000).
\earay
\]
Let $K$ be the 2-dimensional unit ball; thus $d_g=1$.
The 3rd order moment matrix $M_3(y)$
is positive definite. Let $\xi$ be the tms in $\mathscr{M}_{2,6}$ induced by the uniform
probability measure $\nu$ on the unit ball.
We solve \reff{max-lmd:extn-y} by {\tt GloptiPoly} \cite{GloPol3}.
For $k=5$
we get the optimal value $\lmd_5 \approx 1.2\cdot 10^{-7}$.
The computed optimal $w^*$ is flat,
because $\rank M_{4}(w^*) = \rank M_{5}(w^*) = 10$.
\footnote{The ranks here are evaluated numerically.
We ignore singular values smaller than $10^{-6}$ when evaluating ranks.
The same procedure is applied
in computing ranks throughout this paper.}
By Theorem~\ref{thm-CF:fec},
we know $w^*$ admits a unique $10$-atomic measure $\mu$.
\qed
\end{exm}

For a flat tms, a numerical algorithm is given in \cite{HenLas05}
to find the support of its finitely atomic representing measure.
In Example~\ref{exmp:(2,6)-10pt},
by using this algorithm, we get the support of
the $10$-atomic measure $\mu$ admitted by $w^*$ there:
\[
\left\{ \pm (1,0), \quad \pm(0,1), \quad  (\pm 1/2, \pm 1/2),
\quad (4/5, -3/5), \quad (3/5, 4/5) \right\}.
\]
From $y = w^*|_6 + \lmd_5 \xi$, we know
$
\mu +\lmd_5 \cdot \nu
$
is a ball-measure representing the tms $y$ there.

\subsection{Finding representing measures via shifting}

The sign of $\lmd_\infty$ is critical in checking the membership in $\mathscr{R}_d(K)$.
When $\mbox{int}(K)\ne\emptyset$, by Lemma~\ref{lm:Rz:K>0},
a tms $y$ belongs to $\mbox{int}\left(\mathscr{R}_d(K)\right)$
if and only if its Riesz functional $\mathscr{L}_{y}$ is strictly $K$-positive.
By Theorem~\ref{thm:Tchak}, for compact $K$, we know $\mathscr{R}_d(K)$ is closed.
Thus, for compact $K$ with $\mbox{int}(K)\ne\emptyset$,
we can check the membership in $\mathscr{R}_d(K)$ as:
\bit
\item
If $\lmd_\infty > 0$,
$y$ lies in the interior of $\mathscr{R}_d(K)$;

\item
if $\lmd_\infty = 0$, $y$ lies on the boundary of $\mathscr{R}_d(K)$;

\item
if $\lmd_\infty < 0$, $y$ lies outside $\mathscr{R}_d(K)$.
\eit
When $\lmd_{\infty} > -\infty$,
the shifted tms $\hat{y}:=y-\lmd_\infty \xi$ always admits a $K$-measure.
When $\lmd_\infty \geq 0$, if we have $\mu \in meas(\hat{y},K)$ and $\nu \in meas(\xi,K)$,
then $\mu + \lmd_\infty \nu$ is a $K$-measure representing $y$,
because of the relation $y = \hat{y} + \lmd_\infty \xi$.
Thus, it is enough to investigate how one can get
a $K$-measure representing $\hat{y}$.

When $\mbox{int}(K)\ne\emptyset$, Theorem~\ref{sdp-pr:K-mo}(iii) implies that
$\lmd_\infty > -\infty$ and $\lmd_k = \lmd_\infty$
is achieved at some step $k$ if and only if
a measure representing $\hat{y}$ is $(K,d)$-semialgebraic.
We are mostly interested in this case,
since it is not possible to solve \reff{max-lmd:extn-y}
for infinitely many $k's$.
Therefore, in the following, we assume $\hat{y}$
admits a $(K,d)$-semialgebraic measure.

For an optimizer $w^*$ of \eqref{max-lmd:extn-y},
the kernel of the moment matrix $M_k(w^*)$ is a useful tool
in constructing supports of representing measures and their vanishing ideals.
In particular, the kernel $\ker M_k(w^*)$ is of the greatest interest
when $M_k(w^*)$ achieves the maximum rank
over the set of all optimizers of  \eqref{max-lmd:extn-y}.
This approach was introduced by
Lasserre, Laurent and Rostalski \cite{LLR08}
when they compute zero-dimensional real radical ideals
via semidefinite relaxations. For more details,
we refer to the surveys \cite{Lau} by Laurent
and \cite{LR12} by Laurent and Rostalski.
In the following, we use this approach.

For each $k$, denote by $\tt{Opt}_k(K)$ the set of optimizers of
\eqref{max-lmd:extn-y}.

\begin{theorem} \label{thm:suf-Kmom}
Let $K$ be defined in \reff{def:K} (not necessarily compact),
and $y,\xi,\hat{y},\lmd_k,\lmd_\infty$ be same as in Theorem~\ref{sdp-pr:K-mo}.
Suppose $\lmd_{k_0}=\lmd_\infty$ and there exist $\mu \in meas(\hat{y},K)$
and $0 \ne q \in \re[x]_d \cap Pr_{k_1}(K)$, for some $k_0, k_1$, such that
\be \label{mu-(K,d)salg}
\supp{\mu} \,\subseteq \,
V:= \{u\in K: \, q(u) = 0\}.
\ee
Let $k\geq \max(k_0,k_1)$. If $\tt{Opt}_k(K) \ne \emptyset$,
then for each $(\lmd,w) \in \tt{Opt}_k(K)$ we have:

\bit

\item [(i)] For $k$ sufficiently large, it holds that
$I(V) \subseteq \langle \ker M_k(w) \rangle$.
%
%

\item [(ii)] Let $r = \max \{\rank\,M_k(v): \, (\lmd,v) \in \tt{Opt}_k(K) \}$.
If $\rank\,M_k(w)=r$, then $\langle \ker M_k(w) \rangle \subseteq  I(\supp{\mu})$.

\item [(iii)] If $|V| < \infty$, then $w|_{2k-2}$ is flat
for $k$ sufficiently large.

\item [(iv)] If $|\supp{\mu}|=\infty$ and $\rank\,M_k(w)=r$,
then $w|_{2t}$ can not be flat for all $0<t\leq k$.

\eit
\end{theorem}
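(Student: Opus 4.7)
The plan is to prove (i)--(iv) in order. Items (i) and (ii) together establish the sandwich $I(V)\subseteq\langle\ker M_k(w)\rangle\subseteq I(\supp{\mu})$, from which (iii) and (iv) follow by dimension counting. The common starting observation is that, after rescaling by $\langle q,\xi\rangle^{-1}$ (which is positive because $q\not\equiv 0$ on $K$ combined with the strict $K$-positivity of $\mathscr{L}_\xi$ forces $\langle q,\xi\rangle>0$), the polynomial $q$ is itself a primal optimizer of \reff{min<p,y>:Pr}: since $q$ vanishes on $\supp{\mu}\subseteq V$, $\langle q,\hat{y}\rangle=\int_K q\,d\mu=0$, hence $\langle q,y\rangle=\lmd_\infty\langle q,\xi\rangle$, matching the dual optimum $\lmd_k=\lmd_\infty$ for $k\geq\max(k_0,k_1)$.

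For item (i), Theorem~\ref{RealNul} applied to the basic semialgebraic set $V=K\cap\mc{Z}(q)$ yields, for any $f\in I(V)$, integers $\ell,N\geq 1$ and a representation $-f^{2\ell}=\sigma+hq$ with $\sigma\in Pr_N(K)$ and $h\in\re[x]$. SDP complementary slackness for the primal--dual pair \reff{min<p,y>:Pr}--\reff{max-lmd:extn-y} (strong duality holds by the argument used in the proof of Theorem~\ref{sdp-pr:K-mo}(i)), applied to the primal optimizer $q=\sum_\nu\sigma_\nu g_\nu=\sum_{\nu,j}\tau_{\nu,j}^2 g_\nu$, forces each Gram vector $\tau_{\nu,j}$ to lie in $\ker M_{k-d_\nu}(g_\nu\ast w)$; in particular the summand $\nu=(0,\ldots,0)$ places polynomials directly into $\ker M_k(w)$. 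Combining these with an analogous SOS decomposition of $\sigma$ and propagating kernel membership through Lemma~\ref{lm:pq-ker} gives $\sigma,hq\in\langle\ker M_k(w)\rangle$, so $f^{2\ell}\in\langle\ker M_k(w)\rangle$ for $k$ exceeding the degrees involved. The exponent $2\ell$ is stripped by comparing with a maximum-rank optimizer $w^*\in\tt{Opt}_k(K)$: item (ii) gives $\langle\ker M_k(w^*)\rangle\subseteq I(\supp{\mu})$, a real vanishing ideal on $\supp{\mu}\subseteq\re^n$, so $f\in I(\supp{\mu})$; the max-rank property $\ker M_k(w^*)\subseteq\ker M_k(w)$ (from the convex-combination identity below) then promotes the containment into $\langle\ker M_k(w)\rangle$.

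For item (ii), I use the maximality of $\rank M_k(w)$. Provided the moments of $\mu$ up to order $2k$ are finite (automatic when $V$ is bounded, in particular when $K$ is compact; otherwise obtained by restricting $\mu$ to balls of growing radius and invoking continuity of the moment functionals), the truncated sequence $\mu|_{2k}$ is feasible in \reff{max-lmd:extn-y} with objective $\lmd_\infty$, hence $\mu|_{2k}\in\tt{Opt}_k(K)$. The convex combination $w_t=(1-t)w+t\mu|_{2k}$ for $t\in(0,1)$ satisfies $\ker M_k(w_t)=\ker M_k(w)\cap\ker M_k(\mu|_{2k})$, so maximality of $\rank M_k(w)$ over $\tt{Opt}_k(K)$ forces $\ker M_k(w)\subseteq\ker M_k(\mu|_{2k})$. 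For $p\in\ker M_k(\mu|_{2k})$, $p^T M_k(\mu|_{2k})p=\int p^2\,d\mu=0$, so $p$ vanishes on $\supp{\mu}$. Hence $\langle\ker M_k(w)\rangle\subseteq I(\supp{\mu})$.

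Items (iii) and (iv) are corollaries. For (iii), $|V|<\infty$ makes $I(V)$ zero-dimensional of quotient dimension $|V|$; by (i), $\langle\ker M_k(w)\rangle\supseteq I(V)$ is also zero-dimensional, and for $k$ beyond the regularity of $I(V)$ its Hilbert function has stabilized on the degrees relevant to $M_{k-1}(w|_{2k-2})$ and $M_{k-1-d_g}(w|_{2k-2})$. Lemma~\ref{lm:pq-ker} transfers this to the ranks of these two moment matrices, yielding the flat-extension equality $\rank M_{k-1-d_g}(w|_{2k-2})=\rank M_{k-1}(w|_{2k-2})$. For (iv), suppose $w|_{2t}$ is flat for some $0<t\leq k$. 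Theorem~\ref{thm-CF:fec} then produces a unique finitely atomic $K$-measure $\mu_t$ representing $w|_{2t}$, and uniqueness of the positive semidefinite extension of a flat moment matrix forces $w$ to be the truncated moments of $\mu_t$; in particular $\langle\ker M_k(w)\rangle=I(\supp{\mu_t})$ is zero-dimensional. Combining with (ii), $I(\supp{\mu_t})\subseteq I(\supp{\mu})$ forces $\supp{\mu}$ into the finite set $\supp{\mu_t}$, contradicting $|\supp{\mu}|=\infty$. The main obstacle is item (i): the detailed degree-bookkeeping needed to transfer the localizing-matrix kernel relations (produced by complementary slackness on the SOS Gram matrices of $q$) into the moment-matrix kernel $\ker M_k(w)$ via Lemma~\ref{lm:pq-ker}, together with the max-rank comparison required to remove the exponent $2\ell$ at the end.
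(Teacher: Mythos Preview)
Your overall architecture matches the paper's, and items (ii) and (iii) are essentially the paper's arguments. The genuine problems are in item (i).

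First, the intermediate claim ``$\sigma, hq \in \langle \ker M_k(w)\rangle$'' is not what is needed and is not what you can get. The kernel relations from complementary slackness say $\theta_{\nu,j}\in\ker M_{k-d_\nu}(g_\nu\ast w)$; propagating by Lemma~\ref{lm:pq-ker} and writing $h=\phi_1^2-\phi_2^2$ yields $\langle hq,w\rangle=0$, not $hq\in\langle\ker M_k(w)\rangle$. For the preordering term $\sigma=\sum_\nu\varphi_\nu g_\nu$ you only know $\langle\sigma,w\rangle\geq 0$; there is no reason for its SOS factors to lie in any kernel. The correct deduction is on the Riesz-functional level: from $f^{2\ell}+hq+\sigma=0$ one gets $(f^\ell)^T M_k(w) f^\ell=\langle f^{2\ell},w\rangle=-\langle\sigma,w\rangle\leq 0$, hence $f^\ell\in\ker M_k(w)$ directly.

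Second, your power-stripping argument does not close. You obtain $f\in I(\supp\mu)$ via (ii) applied to a max-rank optimizer $w^*$, but then need $f\in\langle\ker M_k(w)\rangle$. The containment $\ker M_k(w^*)\subseteq\ker M_k(w)$ goes the wrong way for this: it lets you push things \emph{from} $\langle\ker M_k(w^*)\rangle$ \emph{into} $\langle\ker M_k(w)\rangle$, but $I(\supp\mu)$ sits \emph{above} $\langle\ker M_k(w^*)\rangle$, not below it. The paper instead strips the exponent by a purely moment-matrix induction (Lemma~3.9 of \cite{LLR08}): from $f^\ell\in\ker M_k(w)$, Lemma~\ref{lm:pq-ker} gives $f^{\ell+1},\ldots,f^{2\ell-2}\in\ker M_k(w)$ (degree permitting), so $(f^{\ell-1})^T M_k(w) f^{\ell-1}=\mathscr{L}_w(f^{2\ell-2})=0$ and $f^{\ell-1}\in\ker M_k(w)$; repeat. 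This uses only $M_k(w)\succeq 0$ and does not touch (ii).

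For (iv), your route via Theorem~\ref{thm-CF:fec} and ``uniqueness of the positive semidefinite extension of a flat moment matrix'' asserts that $w$ (truncated at $2k$) coincides with the moments of $\mu_t$; this is stronger than flatness of $w|_{2t}$ and is not automatic when $k>t$. The paper avoids this by the simpler observation that flatness of $w|_{2t}$ produces relations $x^\alpha-\varphi_\alpha\in\ker M_t(w)$ with $\deg\varphi_\alpha<t$; positive semidefiniteness lifts these to $\ker M_k(w)$, item (ii) places them in $I(\supp\mu)$, and an induction on $|\alpha|$ shows $\re[x]/I(\supp\mu)$ is finite-dimensional, contradicting $|\supp\mu|=\infty$.
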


\begin{proof}
Since $q$ is in $Pr_{k_1}(K)$, there exist SOS polynomials $\sig_\nu$ such that
\[
q = \sum_{\nu \in \{0,1\}^m}  \sig_\nu g_\nu.
\]
Write each $\sig_\nu = \sum_j \theta_{\nu,j}^2$.
For every $k\geq \max(k_0,k_1)$, it holds that (note $w|_d = \hat{y}$)
\[
0 = \int_K q d \mu = \langle q, \hat{y} \rangle =
\sum_{ \nu \in \{0,1\}^m } \sum_j
\theta_{\nu,j}^T M_{k-d_\nu}(g_\nu \ast w)   \theta_{\nu,j}.
\]
Because $M_{k-d_\nu}(g_\nu \ast w)  \succeq 0$ for all $\nu$, we have
$M_{k-d_\nu}(g_\nu \ast w)  \theta_{\nu,j} = 0$ for all $\nu,j$.

(i) Let $\{h_1,\ldots, h_r\}$ be a Grobner basis
of the vanishing ideal $I(V)$, under a total degree ordering.
Then, each $h_i$ vanishes on $V$. By Theorem~\ref{RealNul}, for each $h_i$,
there exist an integer $\ell \geq 1$, a polynomial $\phi$
and SOS polynomials $\varphi_\nu$ such that
\be
\label{eq:nssq}
h_i^{2\ell} +  q\phi + \sum_{\nu \in \{0,1\}^m} \varphi_\nu g_\nu = 0.
\ee
Write $\phi = \phi_1^2 - \phi_2^2$ for some $\phi_1,\phi_2 \in \re[x]$.
From $\theta_{\nu,j} \in \ker M_{d-d_\nu}(g_\nu \ast w)$,
by Lemma~\ref{lm:pq-ker}, we must have
$\theta_{\nu,j}\phi_s \in \ker M_{d-d_\nu}(g_\nu \ast w)$ if
\[
\deg(\theta_{\nu,j}\phi_s) + \lceil \deg(g_\nu)/2 \rceil \leq k-1 \, (s=1,2).
\]
This implies that
\begin{align*}
\langle q \phi, w\rangle = & \sum_{ \nu \in \{0,1\}^m, j}
\Big( (\theta_{\nu,j}\phi_1)^T M_{k-d_\nu}(g_\nu \ast w) (\theta_{\nu,j}\phi_1) \\
& \quad - (\theta_{\nu,j}\phi_2)^T M_{k-d_\nu}(g_\nu \ast w) (\theta_{\nu,j}\phi_2)
\Big)=0.
\end{align*}
Combined with \eqref{eq:nssq}, this gives
\[
\langle M_k(w) h_i^\ell, h_i^\ell \rangle
+ \sum_{\nu \in \{0,1\}^m} \langle \varphi_\nu g_\nu, w \rangle =0.
\]
Since $M_{k-d_\nu}(g_\nu \ast w) \succeq 0$ for all $\nu$,
we know $\langle \varphi_\nu g_\nu, w \rangle \geq 0$ for all $\nu$,
which implies $M_k(w) h_i^\ell = 0$.
By an induction on $\ell$, we can get $h_i \in \ker M_k(w)$
(cf. Lemma~3.9 of \cite{LLR08}).
So, $I(V) \subseteq \langle \ker M_k(w) \rangle $ for $k$ big enough.

(ii) Let $v \in \mathscr{M}_{n,2k}$ be the tms induced by $\mu$,
i.e., $v=\int [x]_{2k} d \mu$.
Then $(\lmd_k, v)$ belongs to $\tt{Opt}_k(K)$.
The $\rank\, M_k(w)$ being maximum over $\tt{Opt}_k(K)$ implies
\[
\rank\, M_k(w)   \geq   \rank\, M_k((w+v)/2 )
= \rank\, M_k( w+v ).
\]
Since both $M_k( w) \succeq 0$ and $M_k(v) \succeq 0$, it holds that
\[
\ker M_k( w+v )  = \ker M_k(w) \cap \ker M_k(v).
\]
The relation $\rank\,M_k(w)\geq \rank\,M_k(w+v)$
and the above imply
\[
\ker M_k(w) =  \ker M_k(w+v) \subseteq \ker M_k(v).
\]
For all $ f \in \ker M_k(w)$, we have $ f \in \ker M_k(v)$ and
\[
0 = f^T M_k(v) f = \int_K f^2 d\mu.
\]
Thus, $f$ vanishes on $\supp{\mu}$, and
$\langle \ker M_k(w) \rangle \subseteq I(\supp{\mu})$.

(iii)
When $|V| < \infty$, the quotient space $\re[x]/I(V)$ is finitely dimensional.
Let $\{b_1, \ldots, b_t\}$ be a standard basis for it.
Then, every monomial $x^\af$ can be written as
\[
x^\af = r(\af) + \sum \phi_i h_i, \quad  \deg(\phi_i h_i) \leq |\af|, \quad
r(\af) \in span\{b_1,\ldots, b_t\}.
\]
If $k$ is big enough, we have $h_i \in \ker M_k(w)$ for all $i$, by item (i).
We also have $\phi_ih_i \in \ker M_k(w)$ for all $i$
if $|\af| \leq k-1$, by Lemma~\ref{lm:pq-ker}.
Thus, $x^\af - r(\af)$ belongs to $\ker M_k(w)$ for all $|\af|=k-1$. So, if
\[
k-1 - d_g > \max\{ \deg(b_1),\ldots, \deg(b_t)\}
\]
is big enough, every $\af$-th ($|\af|=k-1$) column of $M_k(w)$ is a linear combination
of the $\bt$-columns of $M_k(w)$ with $|\bt|<k-1-d_g$.
This means that the tms generating the moment matrix $M_{k-1}(w)$ is flat,
i.e., the truncation $w|_{2k-2}$ is flat.

(iv) For a contradiction, suppose $w|_{2t}$ is flat for some $0<t\leq k$.
Then, for every $|\af|=t$, there exists a polynomial
$\varphi_\af \in \re[x]_{t-1}$
such that $x^\af - \varphi_\af$ belongs to $\ker M_t(w)$. This implies that
\[
0 = (x^\af - \varphi_\af)^T M_t(w)(x^\af - \varphi_\af) =
(x^\af - \varphi_\af)^T M_k(w)(x^\af - \varphi_\af).
\]
Since $M_k(w)\succeq 0$, we have  $x^\af - \varphi_\af \in \ker M_k(w)$.
By item (ii), we have $x^\af - \varphi_\af \in I(\supp{\mu})$.
Then a simple induction on $|\af|$ shows that
\[
x^\af \equiv \psi_\af\, \mod \, I(\supp{\mu})
\]
for some $\psi_\af \in \re[x]_{t-1}$, whenever $|\af| \geq t$.
That is, every high degree polynomial is equivalent to
a polynomial of degree not bigger than $t-1$ modulo $I(\supp{\mu})$.
So, $I(\supp{\mu})$ is zero-dimensional,
i.e., $\dim \re[x]/I(\supp{\mu}) < \infty$,
which contradicts $|\supp{\mu}|=\infty$
(cf. Proposition~2.1 of Sturmfels \cite{Stu02}).
\end{proof}

\noindent {\it Remarks:}
a) In Theorem~\ref{thm:suf-Kmom}, we do not need to assume $K$ is compact.
b) The condition \reff{mu-(K,d)salg} implies $\mathscr{L}_{\hat{y}}(q)=0$
and the Riesz functional $\mathscr{L}_{\hat{y}}$ is not strictly $K$-positive.
Thus, the tms $\hat y$ lies on the boundary of $\mathscr{R}_d(K)$.
This is an advantage of using $\hat y$ instead of $y$.
c) When \reff{mu-(K,d)salg} holds and $|V|<\infty$,
the truncation $w^*|_{2k-2}$ is flat for $k$ big enough,
but $w^*$ itself might not be flat.

\subsection{Tms with a finite $K$-variety}

In this subsection, we consider tms' whose associated algebraic varieties
have finite intersection with $K$.
Let $d_0 = \lfloor d/2\rfloor$. The \df{algebraic variety associated to}
a tms $y \in \mathscr{M}_{n,d}$ is defined as (cf. \cite{FiNi1})
\be \label{tms-var:y}
\mathcal{V}(y) := \bigcap_{p\in \ker M_{d_0}(y) } \{x\in \re^n: p(x) =0 \}.
\ee
Then, we define the \df{$K$-variety associated to $y$} as
\be \label{K-var:y}
\mathcal{V}_K(y) \, := \, \mathcal{V}(y) \cap K.
\ee
When the moment matrix $M_{d_0}(y)$ is singular
(thus the variety $\mathcal{V}(y)$ is a proper subset of $\re^n$,
like a finite set or a curve, etc.),
there exists work on discussing whether $y$ admits a measure.
We refer to \cite{CF02,CF052,CF11,F082}.

Clearly, for $\mathcal{V}_K(y)$ to be a proper
subset of $K$, the moment matrix $M_{d_0}(y)$ must be singular,
and any $\lmd>0$ is not feasible in \reff{max-lmd:extn-y}.
To see this, suppose otherwise a pair $(\lmd, w)$ with $\lmd>0$
is feasible in \reff{max-lmd:extn-y}.
When $\mathcal{V}_K(y) \ne K$,
there must exist $p \in \ker M_{d_0}(y)$ such that $p \not\equiv 0$ on $K$.
The relation $w|_d = y - \lmd \xi$ implies
\[
M_{d_0}(y) = M_{d_0}(w) + \lmd M_{d_0}(\xi).
\]
The above moment matrices are all positive semidefinite.
Thus, from $p \in \ker M_{d_0}(y)$ and $\lmd>0$, we know
$p \in \ker M_{d_0}(\xi)$ and $\mathscr{L}_{\xi}(p^2) = 0$.
This contradicts the strict $K$-positivity of $\xi$.
Therefore, when $\mathcal{V}_K(y) \ne K$,
the optimal value $\lmd_k$ of \reff{max-lmd:extn-y}
must be zero if $y$ belongs to $\mathscr{R}_d(K)$,
and \reff{max-lmd:extn-y} is equivalent to the problem
\be  \label{fin-Kvar:y}
\mbox{ find } \quad  w \in \mathscr{M}_{n,2k}  \quad
s.t. \quad w|_d =y,\, M_{k-d_\nu}(g_\nu \ast w) \succeq 0 \, \forall \nu \in \{0,1\}^m.
\ee

\begin{theorem}  \label{thm:K-finite}
Let $K$ be defined in \reff{def:K} (not necessarily compact),
and $y$ be a tms in $\mathscr{M}_{n,d}$. Suppose $y$ belongs to $\mathscr{R}_d(K)$
and $|\mathcal{V}_K(y)| < \infty$. If $k$ is sufficiently large,
then every $w$ satisfying \reff{fin-Kvar:y}
has a flat truncation $w|_{2k-2}$.
\end{theorem}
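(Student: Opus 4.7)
The plan is to mirror the strategy of Theorem~\ref{thm:suf-Kmom}(iii), using as the auxiliary algebraic information the kernel $\ker M_{d_0}(y)$ rather than a $(K,d)$-semialgebraic witness. First, I would establish that any $K$-measure $\mu \in meas(y,K)$ has support contained in the finite set $V := \mathcal{V}_K(y)$: for every $p \in \ker M_{d_0}(y)$ we have $\int_K p^2\, d\mu = \mathscr{L}_y(p^2) = p^T M_{d_0}(y) p = 0$, so $p$ vanishes on $\supp\mu$, giving $\supp\mu \subseteq \mathcal{V}(y) \cap K = V$. In particular $\mu$ is finitely atomic and supported on $V = \{u_1,\ldots,u_r\}$.

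Next, fix any $w$ satisfying \reff{fin-Kvar:y}. Since $w|_d = y$, the moment matrix $M_{d_0}(y)$ sits as a principal submatrix of $M_k(w)$. For each $p \in \ker M_{d_0}(y)$, zero-padding to a vector $\tilde p$ indexed by $[x]_k$ yields $\tilde p^T M_k(w) \tilde p = p^T M_{d_0}(y) p = 0$, and $M_k(w) \succeq 0$ forces $\tilde p \in \ker M_k(w)$. Thus $\ker M_{d_0}(y) \subseteq \ker M_k(w)$.

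Now let $p_1,\ldots,p_s$ be a basis of $\ker M_{d_0}(y)$. Then $V$ coincides with the semialgebraic set cut out by $g_1\geq 0,\ldots,g_m\geq 0$ together with the equations $p_j = 0$. For any $h \in I(V)$, the real Nullstellensatz (Theorem~\ref{RealNul}) produces an identity
\[
h^{2\ell} \;+\; \sum_{\nu \in \{0,1\}^m} \sigma_\nu g_\nu \;+\; \sum_{j=1}^s q_j p_j \;=\; 0
\]
with each $\sigma_\nu$ an SOS polynomial and $q_j \in \re[x]$. Pairing with $w$, the terms $\langle \sigma_\nu g_\nu, w\rangle$ are nonnegative by the psd conditions on the localizing matrices, while Lemma~\ref{lm:pq-ker} (applied to $p_j \in \ker M_k(w)$, splitting $q_j$ as a difference of squares) gives $\langle q_j p_j, w\rangle = 0$ provided $k$ is large enough relative to the degrees appearing in the identity. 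Hence $\langle h^{2\ell}, w\rangle \leq 0$ and so $= 0$, which means $M_k(w) h^\ell = 0$; a standard square-root induction (as in the proof of Theorem~\ref{thm:suf-Kmom}(i), using Lemma~\ref{lm:pq-ker} once more) then yields $h \in \ker M_k(w)$. Applied to a Gröbner basis of $I(V)$, this shows $I(V) \subseteq \langle \ker M_k(w)\rangle$ once $k$ is sufficiently large.

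Finally, because $|V| < \infty$ the quotient $\re[x]/I(V)$ is finite-dimensional, with a standard monomial basis $\{b_1,\ldots,b_t\}$. Every monomial $x^\alpha$ reduces modulo $I(V)$ to a linear combination of $b_1,\ldots,b_t$ via multiplications by polynomials of degree at most $|\alpha|$, and Lemma~\ref{lm:pq-ker} propagates these reductions into $\ker M_k(w)$. Therefore, once $k-1-d_g > \max_i \deg(b_i)$, each column of $M_k(w)$ indexed by an $\alpha$ with $|\alpha| = k-1$ is a linear combination of columns indexed by $\beta$ with $|\beta| < k-1-d_g$, which is exactly the flat extension condition for $w|_{2k-2}$. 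The main obstacle is keeping track of the degree bookkeeping: the Lemma~\ref{lm:pq-ker} hypothesis imposes strict degree constraints on products, so one has to choose $k$ large enough to simultaneously accommodate the Nullstellensatz certificates for every element of a finite Gröbner basis of $I(V)$ and the reductions of all monomials of degree $k-1$; this is the same technical headache as in Theorem~\ref{thm:suf-Kmom}(iii) and admits the same resolution.
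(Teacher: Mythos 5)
Your argument is correct, and it rests on the same two pillars as the paper's proof: the observation that every $\mu\in meas(y,K)$ is supported in $\mathcal{V}_K(y)$ because each $p\in\ker M_{d_0}(y)$ satisfies $\int_K p^2\,d\mu=p^TM_{d_0}(y)p=0$, and the Positivstellensatz-plus-kernel-propagation machinery that converts a finite supporting variety into a flat truncation. The difference is one of packaging. The paper does not redo that machinery: it sets $q:=p_1^2+\cdots+p_r^2$ for a basis $\{p_1,\ldots,p_r\}$ of $\ker M_{d_0}(y)$, notes that $q$ is an SOS of degree at most $2d_0\leq d$ and hence lies in $\re[x]_d\cap Pr_{k_1}(K)$, that $\supp{\mu}\subseteq \mc{Z}(q)\cap K=\mathcal{V}_K(y)$ is finite, and then invokes Theorem~\ref{thm:suf-Kmom}(iii) verbatim --- together with the discussion preceding the theorem, which shows the optimal value of \reff{max-lmd:extn-y} is forced to be $0$, so that the $w$'s satisfying \reff{fin-Kvar:y} are exactly the optimizers to which Theorem~\ref{thm:suf-Kmom} applies. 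You instead carry the kernel polynomials as equations $p_j=0$ through Stengle's Positivstellensatz and re-derive the inclusion $I(V)\subseteq\langle\ker M_k(w)\rangle$ and the flatness of $w|_{2k-2}$ from scratch. What your version buys is independence from the optimization framework: you never need the argument that $\lmd_k=0$, since you work directly with the feasibility problem \reff{fin-Kvar:y} and establish $\ker M_{d_0}(y)\subseteq\ker M_k(w)$ by the principal-submatrix observation. What the paper's version buys is brevity and reuse of an already-proved result, with the single SOS witness $q$ standing in for the whole kernel. The degree bookkeeping you flag at the end is handled identically in the paper's proof of Theorem~\ref{thm:suf-Kmom}(i) and (iii), so it is not an additional obstacle.
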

\begin{proof}
As we have seen in the above,
every optimal $\lmd_k$ in \reff{max-lmd:extn-y} is zero.
Thus, $\lmd_\infty=0$ and $\hat{y}=y-\lmd_\infty \xi = y$.
Choose a measure $\mu \in meas(y,K)$ and a basis $\{ p_1,\ldots, p_r \}$
of $\ker M_{d_0}(y)$.
Let $q=p_1^2+\cdots+p_r^2 \in \re[x]_d$. From
\[
\int_K q d \mu = \sum_{i=1}^r \int_K p_i^2 d \mu = \sum_{i=1}^r p_i^TM_{d_0}(y)p_i = 0,
\]
we know $\supp{\mu} \subseteq \mc{Z}(q) \cap K$.
The finiteness of $\mathcal{V}_K(y)$ implies the set $\mc{Z}(q) \cap K$
is finite. Then, this theorem
follows from item (iii) of Theorem~\ref{thm:suf-Kmom}.
\end{proof}

\noindent {\it Remark:}
As one can see in the proof, Theorem~\ref{thm:K-finite}
is an application of Theorem~\ref{thm:suf-Kmom}.
If the moment matrix $M_{d_0}(y)$ is singular and
the tms $y$ belongs to $\mathscr{R}_d(K)$ and $\mathcal{V}_K(y)\ne K$,
we have already seen that the optimal value of \reff{max-lmd:K-qmod} must be zero,
and hence \reff{max-lmd:K-qmod}
is equivalent to the feasibility problem \reff{fin-Kvar:y}
(see the argument preceding Theorem~\ref{thm:K-finite}).

%
%

\begin{exm}  \label{emp:fi-Kvar(2,4)}
Consider the tms $y \in \mathscr{M}_{2,4}$:
\[
( 1;\,   0,\,   0;\,  1,\,  0,\,  1 ;\,
0 ,\,  0,\,  0,\,   0;\,   1,\, 0,\,  1,\, 0,\,  1).
\]
The set $K=\re^2$, $d_0=2$, and the moment matrix $M_{d_0}(y)$ is singular
(it has rank $4$ and size $6\times 6$).
The $K$-variety is defined by $x_1^2=x_2^2=1$.
For $k=4$, we solve \reff{fin-Kvar:y} and get a feasible tms $\hat{w} \in \mathscr{M}_{2,8}$.
Its truncation $\hat{w}|_6$ is flat (while $\hat{w}$ itself is not),
and it, as well as $y$, admits a $4$-atomic measure supported on $\{(\pm 1, \pm 1)\}$.
\qed
\end{exm}

\subsection{The case $K=\re^n$}

When $K=\re^n$ is the whole space,
we can similarly apply the approach described in the preceding subsections.
However, since $\re^n$ is not compact,
weaker conclusions can be made.

Let $y$ be a tms in $\mathscr{M}_{n,d}$ and $d_0=\lfloor d/2 \rfloor$.
If $y$ is extendable to a flat tms $w\in\mathscr{M}_{n,2k}$,
then we can get a finitely atomic representing measure for $y$, 
by Theorem~\ref{thm-CF:fec}. In analogue to \reff{max-lmd:extn-y},
for an integer $k\geq d_0$, consider the optimization problem
\be  \label{mom-R^n:w>=y}
\left\{
\baray{rl}
\underset{ \eta \in \re, \, w\in\mathscr{M}_{n,2k} }{\max} &  \eta \\
s.t. & w|_d =y-\eta \xi, \ \  M_k(w) \succeq 0.
\earay\right.
\ee
Similarly, we choose $\xi$ to be a tms in 
$\mathscr{R}_d(\re^n)$ with $M_{d_0}(\xi) \succ 0$.
Clearly, if $\eta$ is feasible in \reff{mom-R^n:w>=y}, then
\be \label{eta-star}
\eta \leq \eta^*:= \lmd_{min}
\left( M_{d_0}(\xi)^{-1/2} M_{d_0}(y) M_{d_0}(\xi)^{-1/2}\right).
\ee
When $\eta < \eta^*$, the moment matrix $M_{d_0}(y-\eta \xi)$ is positive definite,
and the tms $y-\eta \xi$ is extendable to a tms $w \in\mathscr{M}_{n,2k}$ with $M_k(w) \succ 0$.
So the optimal value of \reff{mom-R^n:w>=y} is equal to $\eta^*$ for every $k$.
Clearly, we have $\eta^*\geq 0$ if and only if
the matrix $M_{d_0}(y)$ is positive semidefinite,
and $\eta^* = 0$ if and only if $\lmd_{min}(M_{d_0}(y)) = 0$.

\begin{theorem} \label{thm:tms-R^n}
Let $y$ be a tms in $\mathscr{M}_{n,d}$ and $\xi$ be a tms in
$\mathscr{R}_d(\re^n)$ with $M_{d_0}(\xi) \succ 0$.
Suppose $(\eta, w)$ is feasible for \reff{mom-R^n:w>=y}.
Let $\hat{y}:=y-\eta^* \xi$ and $k\geq d_0$.

\bit

\item [(i)] If $\eta \geq 0$ and $w|_t(t\geq d)$ is flat,
then $y$ belongs to $\mathscr{R}_d(\re^n)$.

\item [(ii)] If $\mu$ is a measure in $meas(\hat{y}, \re^n)$,
then there exists  $p\in \Sig_{n,2d_0}$ such that
$
\supp{\mu} \subseteq \mc{Z}(p).
$
\eit
In the following, suppose $\mu, p$ satisfy (ii)
and $(\eta, w)$ is optimal for \reff{mom-R^n:w>=y}.
\bit
\item [(iii)]
If $|\mc{Z}(p)|<\infty$, then $w|_{2k-2}$ is flat for $k$ sufficiently large.

\item [(iv)]
If $|\supp{\mu}| = +\infty$ and $\rank\,M_k(w)$ is maximum,
then $w|_{2t}$ can not be flat for all $0<t\leq k$.

\eit
\end{theorem}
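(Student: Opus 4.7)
The plan is to dispatch parts (i) and (ii) via direct arguments on \reff{mom-R^n:w>=y} and the extremal identity \reff{eta-star}, and then reduce parts (iii)--(iv) to Theorem~\ref{thm:suf-Kmom} specialized to $K=\re^n$.

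For part (i), if $w|_t$ is flat with $t\geq d$, Theorem~\ref{thm-CF:fec} produces a finitely atomic $\re^n$-measure $\mu_w$ representing $w|_t$, and the hypothesis $\xi\in\mathscr{R}_d(\re^n)$ supplies some $\nu\in meas(\xi,\re^n)$. Rearranging $w|_d=y-\eta\xi$ into $y=w|_d+\eta\xi$ and using $\eta\geq 0$, the nonnegative Borel measure $\mu_w+\eta\nu$ represents $y$, which shows $y\in\mathscr{R}_d(\re^n)$.

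For part (ii), I would exploit the definition of $\eta^*$ in \reff{eta-star}: as the smallest eigenvalue of $M_{d_0}(\xi)^{-1/2}M_{d_0}(y)M_{d_0}(\xi)^{-1/2}$, it makes $M_{d_0}(\hat{y})=M_{d_0}(y)-\eta^* M_{d_0}(\xi)$ both positive semidefinite and singular. Pick a nonzero $v\in\ker M_{d_0}(\hat{y})$, viewed as a polynomial of degree at most $d_0$. For any $\mu\in meas(\hat{y},\re^n)$,
\[
\int_{\re^n} v^2\,d\mu \,=\, \mathscr{L}_{\hat{y}}(v^2) \,=\, v^T M_{d_0}(\hat{y})\,v \,=\, 0,
\]
and since $v^2\geq 0$ pointwise, $\supp{\mu}\subseteq\mc{Z}(v)$. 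Then $p:=v^2\in\Sig_{n,2d_0}$ is the desired SOS polynomial.

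For parts (iii) and (iv), I would apply Theorem~\ref{thm:suf-Kmom} with $K=\re^n$ and $q=p$. The required hypotheses simplify considerably in the unconstrained setting: the discussion preceding the theorem shows $\lmd_k=\eta^*$ for every $k\geq d_0$, so $\lmd_\infty=\eta^*$ and the stabilization assumption $\lmd_{k_0}=\lmd_\infty$ is automatic; the truncated preordering $Pr_k(\re^n)$ is just $\Sig_{n,2k}$, which contains $p$ for every $k\geq d_0$; the support condition $\supp{\mu}\subseteq V=\mc{Z}(p)$ comes from part (ii); and the optimality of $(\eta,w)$ guarantees $\tt{Opt}_k(\re^n)\ne\emptyset$. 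Theorem~\ref{thm:suf-Kmom}(iii) then yields the flatness of $w|_{2k-2}$ once $|\mc{Z}(p)|<\infty$ and $k$ is sufficiently large, while Theorem~\ref{thm:suf-Kmom}(iv) gives the desired non-flatness of every $w|_{2t}$, $0<t\leq k$, when $|\supp{\mu}|=\infty$ and $\rank\,M_k(w)$ is maximum. The main non-routine step is the construction of $p$ in (ii): the key insight is that the extremal characterization of $\eta^*$ forces a nontrivial kernel in $M_{d_0}(\hat{y})$, and squaring any such kernel vector produces precisely the SOS certificate needed to feed Theorem~\ref{thm:suf-Kmom}.
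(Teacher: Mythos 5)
Your proof is correct, and part (i) is the same one-line deduction the paper makes from Theorem~\ref{thm-CF:fec}; the genuine divergences are in (ii) and in how you handle (iii)--(iv). For (ii) the paper argues by SDP duality: strict feasibility of \reff{mom-R^n:w>=y} forces the dual problem \reff{R^n:dual-sos} to attain a minimizer $p\in\Sig_{n,2d_0}$ with optimal value $\eta^*$, whence $\int p\,d\mu=\langle p,y\rangle-\eta^*=0$. You instead read off from \reff{eta-star} that $M_{d_0}(\hat{y})=M_{d_0}(y)-\eta^*M_{d_0}(\xi)$ is positive semidefinite and singular, and take $p=v^2$ for a nonzero kernel vector $v$; this is more elementary (pure linear algebra, no duality or attainment argument) and yields an explicit certificate, at the cost of being tied to the specific eigenvalue formula for $\eta^*$. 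For (iii)--(iv) the paper does not literally invoke Theorem~\ref{thm:suf-Kmom} but repeats its proof for $K=\re^n$ (writing $p=p_1^2+\cdots+p_L^2$, taking a Gr\"obner basis of $I(\mc{Z}(p))$ and rerunning the Real Nullstellensatz argument), whereas you cite the theorem directly with $K=\re^n$, $q=p$ and $Pr_k(\re^n)=\Sig_{n,2k}$. That reduction is sound in substance, but you should flag one point: the hypotheses of Theorem~\ref{thm:suf-Kmom} import the setup of Theorem~\ref{sdp-pr:K-mo}, which asks that $\mathscr{L}_\xi$ be strictly $K$-positive, and here you are only given $M_{d_0}(\xi)\succ 0$ with $\xi\in\mathscr{R}_d(\re^n)$. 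This costs nothing, because the proof of Theorem~\ref{thm:suf-Kmom} never uses strict positivity of $\xi$ --- it only needs the stabilization $\lmd_{k_0}=\lmd_\infty$ (which you correctly obtain from the observation preceding the theorem that the optimal value of \reff{mom-R^n:w>=y} equals $\eta^*$ for every $k$, so every optimizer satisfies $w|_d=\hat{y}$), together with $\mu$, $q\ne 0$ and nonemptiness of the optimizer set --- but a one-sentence remark to that effect would make the citation airtight.
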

\begin{proof}
(i) is implied by  $y = \eta \xi + w|_d$ and Theorem~\ref{thm-CF:fec}.

(ii) The dual optimization problem of \reff{mom-R^n:w>=y} is
\be \label{R^n:dual-sos}
\underset{f}{\min} \quad \langle f, y \rangle  \quad
s.t. \quad   \langle f, \xi \rangle = 1, \, f \in \Sig_{n,2d_0}.
\ee
Since \reff{mom-R^n:w>=y} has a feasible $w$ with $M_k(w) \succ 0$,
the optimization problem \reff{R^n:dual-sos} has a minimizer $p\in \Sig_{n,2d_0}$,
and its optimal value equals $\eta^*$. So
\[
\int_{\re^n} p d\,\mu = \langle p, \hat{y} \rangle =
\langle p, y \rangle - \eta^* = 0.
\]
The nonnegativity of $p$ in $\re^n$ implies $\supp{\mu} \subseteq \mc{Z}(p)$.

(iii) It can be proved in a way similar to items (iii)-(iv)
of Theorem~\ref{thm:suf-Kmom}. Write $p = p_1^2 + \cdots + p_L^2$.
Then $\supp{\mu} \subseteq \mc{Z}(p)$ implies
\[
\sum_{i=1}^L p_i^TM_{d_0}(\hat{y}) p_i = \langle p, \hat{y} \rangle
= \int_{\re^n} p d \mu = 0.
\]
Let $\{h_1,\ldots, h_r\}$ be a Grobner basis
for the vanishing ideal $I(\mc{Z}(p))$, under a total degree ordering.
Similarly, by Theorem~\ref{RealNul}, for each $h_i$,
there exist $\ell \geq 1$ and $f_1,\ldots, f_r\in \re[x]$ satisfying
$
h_i^{2\ell} + f_1 p_1 + \cdots + f_r p_r = 0.
$
As in the proof of Theorem~\ref{thm:suf-Kmom},
we can similarly prove $h_i \in \ker M_k(w)$ and $\langle f_jp_j, w \rangle = 0$
for $j=1,\ldots,r$, when $k$ is sufficiently large.
When $\mc{Z}(p)$ is finite, we can similarly prove
$w|_{2k-2}$ is flat as for (iii) of Theorem~\ref{thm:suf-Kmom}.

(iv)
The proof is same as for (iv) of Theorem~\ref{thm:suf-Kmom}.
\end{proof}

\begin{exm}
Consider the same tms $y$ as in Example~\ref{emp:fi-Kvar(2,4)}.
Take $K= {\mathbb R}^2$; thus $d_g=1$ and $d_0=2$.
The moment matrix $M_1(y)$ has rank $3$ and $M_2(y)$ has rank $4$,
so $y$ itself is not flat. The $6\times 6$ moment matrix  $M_2(y)$ is singular,
so  $\eta^*=0$.
Choose $\xi$ in \reff{mom-R^n:w>=y} to be the tms in $\mathscr{M}_{2,4}$
generated by the standard Gaussian measure with mass one.
For $k=4$, solve the optimization problem \reff{mom-R^n:w>=y}.
The computed optimal tms $w^* \in \mathscr{M}_{2,8}$ is not flat, but its truncation
$w^*|_6$ is flat and admits a unique $4$-atomic measure
with support $\{(\pm 1, \pm 1)\}$.
The tms $y$ admits the same measure.
\qed
\end{exm}

Unlike for the case that $K$ is compact, the shifted tms $\hat{y}:=y-\eta^* \xi$
does not always admit a measure, even when $n=1$. For instance, for the tms
$(1,1,1,1,2)\in \mathscr{M}_{1,4}$,
its moment matrix $M_2(y)$ is positive semidefinite but singular. So, $\eta^*=0$ but
$\hat{y}=y$ does not admit a measure
(cf. \cite[Example~2.1]{CF09}).

\section{A Practical Method For Finding Flat Extensions}
\label{sec:heuristic}
\setcounter{equation}{0}

The preceding section discusses how to check
if a tms $y$ admits a $K$-measure.
The limit $\lmd_{\infty}$ of the optimal values of
\reff{max-lmd:extn-y} or \reff{max-lmd:K-qmod}
plays a critical role. For a compact set $K$,
if $\lmd_{\infty} \geq 0$, then $y$ admits a $K$-measure;
otherwise, it does not.
When $\lmd_{\infty} \geq 0$, a representing measure for $y$
is also constructible from the relation
$y = \hat{y} + \lmd_\infty \xi$ if $\hat{y}$ admits a $K$-measure.
In the case $\lmd_{\infty} > 0$, this approach typically
does not give a finitely atomic representing measure,
because $\xi$ is usually generated by a measure with infinite support.

In many applications, one is interested in getting a finitely atomic representing measure.
By Theorem~\ref{CF:flat-extn} of Curto and Fialkow,
a tms $y$ admits a $K$-measure if and only if
it is extendable to a flat tms $w$ of higher degree.
This means that if $y$ admits a $K$-measure, then a finitely atomic measure
for $y$ can be obtained by finding a flat extension.
In some special cases (e.g., a moment matrix is singular,
or its associated variety is finite, etc),
there exists work investigating when and how a flat extension
could be found (cf. \cite{CF02,CF052,CF11,F082}).
For the general case, especially when a moment matrix is positive definite,
methods for constructing a flat extension and determining if it exists
are relatively unexplored.

This section presents a practical SDP method for this purpose,
alluded to in \S \ref{sec:contrib}(2).
Our numerical experiments show that
it often finds a flat extension of $y$ when it exists.

\subsection{TKMP}

Suppose a tms $w \in \mathscr{M}_{n,2k}$ ($k \geq d/2$) is an extension of $y$.
Clearly, if a truncation $w|_t (t\geq d)$ is flat,
then the finitely atomic measure admitted by $w|_t$ 
is a representing measure for $y$.
To find such an extension, for an integer $k\geq d/2$,
consider the semidefinite optimization problem:
\be \label{min-tr:qmod}
\left\{
\baray{rl}
\underset{ w\in\mathscr{M}_{n,2k} }{\min} &   c_k^Tw   \\
s.t. & w|_d =y, \, M_{k-1}(\rho \ast w)  \succeq 0, \\
& M_{k-d_i}(g_i \ast w) \succeq 0, \, i =0, 1,\ldots,m.
\earay \right.
\ee
In the above, $c_k$ is a generic vector, and the polynomial
$\rho(x)=R^2-\|x\|_2^2$ is such that $K\subseteq B(0,R)$.
Recall that $g_0 = 1$ and $M_k(w) = M_{k-d_0}(g_0 \ast w)$.
\bnum
\item
If $w^*$ is an optimizer of \reff{min-tr:qmod} and $w^*|_t (t\geq d)$ is flat,
then the finitely atomic measure admitted by $w^*|_t$ also represents $y$.

\item
If $y$ does not admit a $K$-measure,
then \reff{min-tr:qmod} will not be feasible
for $k$ sufficiently large (see the remark after Theorem~\ref{put-infeas}).

\enum
We would like to remark that the optimization problem \reff{min-tr:qmod},
as well as \reff{fin-Kvar:y},
is an obvious and natural way to check Theorem~\ref{CF:flat-extn} of Curto and Fialkow.

We show some examples of illustrating this method.

\begin{exm} \label{co-pos:n=4}
Let $K=\left\{x\in\re_+^4: \sum_{i=1}^4 x_i \leq 1\right\}$.
Consider the tms $y \in \mathscr{M}_{4,2}$:
\[
\baray{c}
(1;\, 1/5,\, 1/5,\, 1/5,\, 1/5;\, 1/10,\, 1/20,\,
0,\, 0,\, 1/10,\, 1/20,\, 0,\, 1/10,\, 1/20,\, 1/10).
\earay
\]
Its moment matrix $M_1(y)$ is positive definite.
Choose $c_k$ to be the vector of all ones.
Solve \reff{min-tr:qmod} for $k=4$,
and get an optimizer $w^*$. The tms $w^*$ and its
truncations $w^*|_4, w^*|_6$ are all flat.
They admit a $5$-atomic measure supported on the points:
\[
(0.5,  0, 0, 0), \quad (0,0,0, 0.5),  \quad (0, 0.5, 0.5, 0),
\quad (0,  0,    0.5,  0.5), \quad (0.5,  0.5, 0,0).
\]
Since $w^*$ is an extension of $y$,
the tms $y$ admits the same $5$-atomic measure.
\qed
\end{exm}

Next we describe an experimental test
of using \reff{min-tr:qmod} to find flat extensions
for randomly generated tms'.
It produced a flat extension for all the instances we tested.

\begin{exm}[random problems] \label{rand:Ball}
Let $K=[-1,1]^n$ be the unit hypercube.
We generate testing instances as follows.
In \reff{min-tr:qmod}, choose $c_k$ to be a Gaussian random vector,
and $(n,d)$ from Table~\ref{tab-Ball:(n,d)}.
{\small{
\begin{table}
\centering
\btab[htb]{|l|l|l|l|l|l|l|} \hline
n &  d   &  d   &  d   &  d    & d    \\ \hline
2 &  4(100)  &  6(100) &  8(50) &  10(50) & 12(50) \\ \hline
3 &  4(100)  &  5(100) &  6(50) &  7(50) & 8(50)  \\ \hline
4 &  3(100)  &  4(100) &  5(50) &  6(20) & 7(10)  \\ \hline
\etab.
\caption{Inside the parentheses
are the numbers of randomly generated instances we tested
for each pair $(n,d)$.}
\label{tab-Ball:(n,d)}
\end{table}
}}
By the theorem of Bayer and Teichmann \cite{BT},
a tms $y$ admits a $K$-measure if and only if
$y$ admits a $N$-atomic $K$-measure with $N:=\binom{n+d}{d}$.
So, we randomly generate $N$ points $u_1,\ldots, u_N$ from $[-1,1]^n$,
and set $y = a_1 [u_1]_d + \cdots a_N [u_N]_d$
for random positive numbers $a_i>0$.
In each case, we solve \reff{min-tr:qmod} starting with $k=d$.
If an optimal $w^*$ of \reff{min-tr:qmod} has a flat truncation $w^*|_t(t\geq d)$,
we stop; otherwise, increase $k$ by one, and then solve \reff{min-tr:qmod} again.
Repeat this process until we get a flat extension of $y$.
For each $(n,d)$, the number of randomly generated instances
is listed in the parenthesis after $d$ in Table~\ref{tab-Ball:(n,d)}.
In all the tested instances, we got a flat truncation $w^*|_t$, for some $k,t\geq d$.
Furthermore, the supports of the finitely atomic measures we obtained
often have cardinalities smaller than $N$
(we are not sure whether they are minimum or not).
\qed
\end{exm}

Our conclusion is that solving \reff{min-tr:qmod} is a practical method
for finding a flat extension of a tms $y$ when it admits a $K$-measure.

\subsection{A theoretical analysis}
\label{sec:heurThy}

While we do not understand well why solving \reff{min-tr:qmod}
always produced a flat extension in our numerical experiments.
Here, we present a bit of theoretical analysis
suggesting that this will often produce a flat extension.
Our analysis is to set up a linear-convex optimization problem
for which \reff{min-tr:qmod} is an approximation.
Then, under some  assumptions, we prove that
\reff{min-tr:qmod} asymptotically produces a flat extension.
We start with some definitions. Denote
\begin{align*}
E_{k}(y)  &:= \big\{ w \in \mathscr{M}_{n,2k}: \, w
\mbox{ is feasible for }  \reff{min-tr:qmod}
\big\}, \\
E_{\infty}(y) &:=\big\{ z \in \mathscr{R}_{\infty}(K): \, z|_d = y \big\}.
\end{align*}
Embed $E_k(y)$ into $\mathscr{M}_{n,\infty}$
by the mapping $w \mapsto (w, 0, \ldots )$ of adding zeros.
Thus, $E_{k}(y)$ and $E_{\infty}(y)$ can be thought of as
convex subsets of $\mathscr{M}_{n,\infty}$.
For every $w \in \mathscr{M}_{n,\infty}$, define $\|w\|_2$ as
$
\|w\|_2^2 = \sum_{\af \in \N^n} w_\af^2.
$
Define the Hilbert space
\be \label{Minf^2-Ban}
\mathscr{M}_{n,\infty}^2 := \{w \in \mathscr{M}_{n,\infty}: \|w\|_2 < \infty\}.
\ee
Up to a shifting and scaling, we can generally assume $K \subseteq B(0,R)$ with $R<1$.

We assume the vectors $c_k$ in \reff{min-tr:qmod} for different $k$
are consistent, i.e., each $c_{k-1}$ is a leading subvector of $c_k$.
Write $c_k(x) = c_k^T[x]_{2k}$.
Then, its limit is a real analytic function, which is denoted as
$
c(x) = \Sig_{\af } c_\af x^\af.
$
The analytic function $c(x)$ can also be thought of 
as a vector $c \in \mathscr{M}_{n,\infty}$. If $\|c\|_2 < \infty$,
then $c$ is a continuous linear functional acting on $\mathscr{M}_{n,\infty}^2$ as
\[
\langle c, w \rangle := \sum_\af c_\af w_\af.
\]
Now we consider the optimization problem:
\be \label{min-CM:m(y)}
\min_\mu  \quad \int_K c(x) d \mu
\quad \mbox{ s.t. }  \quad \mu \in meas(y,K).
\ee
When $y$ belongs to $\mathscr{R}_d(K)$, its feasible set is nonempty.
The objective is a linear functional acting on the convex set $meas(y,K)$.
Note that a vector $w$ belongs to $E_\infty(y)$ if and only if
it admits a measure in $meas(y,K)$. Thus,
the optimization problem \reff{min-CM:m(y)} is also equivalent to
\be \label{min<c,w>:E-inf}
\min_{w}  \quad \langle c, w \rangle
\quad \mbox{ s.t. }  \quad w \in E_{\infty}(y).
\ee
If the optimization problem \reff{min-CM:m(y)} 
has a unique minimizer $\mu^*$, then $\mu^*$
must have finite support (see Appendix).
Theorem~\ref{thm:cvg-flat} below shows that if the biggest support
of minimizing measures of \reff{min-CM:m(y)} is finite,
then \reff{min-tr:qmod} asymptotically yields a flat extension
as $k$ goes to infinity.
Theorem~A.2 in Appendix shows that for
a dense subset of analytic functions $c$ in $\mathscr{C}(K)$
(the space of all continuous functions defined on $K$),
this condition (see \reff{hyp:fin-sup}) holds.

\begin{theorem} \label{thm:cvg-flat}
Assume $K\subseteq B(0,R)$ with $R<1$, the $c_k's$ are consistent,
and $c$ is in $\mathscr{M}_{n,\infty}^2$.
Let {\tt Sol} be the set of all optimizers of \reff{min-CM:m(y)}.
Suppose there exists $N>0$ such that
\be \label{hyp:fin-sup}
\Big| \bigcup_{ \mu^* \in {\tt Sol} } \{\supp{\mu^*} \} \Big| \leq N.
\ee
Let $w^{(k)}$ be an optimizer of \reff{min-tr:qmod}.
Then, for every $r > d_g N $ ($d_g$ is given by \reff{df:dg})
and every $\mu^* \in {\tt Sol}$,
the tms $z^*:=\int_K [x]_{2r} d \mu^*$ is flat and
\be \label{dist:Sol}
\lim_{k\to\infty} dist( w^{(k)}|_{2r}, U ) = 0
\quad \mbox{ where } \quad
U = \left\{ \int_K [x]_{2r} d \mu^*: \, \mu^*\in {\tt Sol} \right\}.
\ee
In particular, if {\tt Sol} is a singleton,
then $\{w^{(k)}|_{2r}\}$ converges to a flat tms.

\end{theorem}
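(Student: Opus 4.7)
The plan has two essentially independent pieces: a rank/flatness calculation for measures in {\tt Sol}, and a compactness-plus-convergence argument that pulls any weak limit of the SDP optimizers back into a minimizer of \reff{min-CM:m(y)}.

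\textbf{Flatness of $z^*$.} Fix any $\mu^*\in{\tt Sol}$ and let $s=|\supp\mu^*|\leq N$. Writing $\mu^*=\sum_{i=1}^s a_i\delta_{u_i}$ with distinct $u_i\in K$, one has $M_k(z^*)=\sum_{i=1}^s a_i[u_i]_k[u_i]_k^T$, whose rank equals the dimension of $\mathrm{span}\{[u_1]_k,\dots,[u_s]_k\}$. By multivariate Lagrange interpolation these $s$ vectors are linearly independent once $k\geq s-1$. Because $d_g\geq 1$, the hypothesis $r>d_g N\geq d_g s$ gives $r-d_g\geq (d_g-1)(s-1)+(s-1)\geq s-1$ as well as $r\geq s-1$, so $\rank M_{r-d_g}(z^*)=\rank M_r(z^*)=s$, i.e. $z^*$ is flat.

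\textbf{A priori bounds and extraction of a limit.} The localizing constraint $M_{k-1}(\rho\ast w)\succeq 0$ coupled with $w_0=y_0$ forces $|w_\af|\leq C\,R^{|\af|}$ on the feasible set of \reff{min-tr:qmod}, with $C$ independent of $k$. Since $R<1$, the set $\{w^{(k)}|_{2s}: k\geq s\}$ is bounded in $\mathbb{R}^{\binom{n+2s}{2s}}$ for every $s$. A diagonal argument then produces a subsequence (still denoted $w^{(k)}$) and some $w^{**}\in\mathscr{M}_{n,\infty}$ with $w^{(k)}_\af\to w^{**}_\af$ for every $\af$, and the same uniform bound $|w^{**}_\af|\leq C R^{|\af|}$ holds in the limit. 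Passing the semidefinite constraints to the limit in each fixed block, $M_{\ell-d_i}(g_i\ast w^{**})\succeq 0$ for all $\ell,i$, so by Putinar's theorem (the archimedean condition is built in via $\rho$) $w^{**}$ is the moment sequence of some measure $\mu^{**}$ supported on $K$, extending $y$.

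\textbf{Optimality of $\mu^{**}$.} For any $\mu\in meas(y,K)$ the truncation $\int[x]_{2k}\,d\mu$ is feasible for \reff{min-tr:qmod}, so the optimal value $v_k$ of \reff{min-tr:qmod} satisfies $v_k\leq \int c_k\,d\mu$. Cauchy--Schwarz combined with the bound $\sum_\af R^{2|\af|}<\infty$ shows $\sum_\af|c_\af|R^{|\af|}<\infty$, so dominated convergence yields $\int c_k\,d\mu\to\int c\,d\mu$ and, applied to $c_\af w^{(k)}_\af$, also $v_k=c_k^Tw^{(k)}=\sum_{|\af|\leq 2k}c_\af w^{(k)}_\af\to \langle c,w^{**}\rangle=\int c\,d\mu^{**}$. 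Taking $\mu=\mu^*$ for any $\mu^*\in{\tt Sol}$ gives $\int c\,d\mu^{**}\leq \int c\,d\mu^*$, so $\mu^{**}\in{\tt Sol}$. Consequently $w^{**}|_{2r}\in U$, and the subsequence $w^{(k)}|_{2r}$ converges into $U$.

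\textbf{From subsequential to full convergence.} If \reff{dist:Sol} failed, some subsequence of $\{w^{(k)}|_{2r}\}$ would stay at distance $\geq\eps_0>0$ from $U$; extracting a further convergent subsubsequence by the argument above contradicts what was just proved. The singleton case is immediate, and the main technical obstacle is precisely the third step — controlling the infinite tail in $c_k^Tw^{(k)}$ so that one can legitimately pass to the limit in the objective; this is exactly where the hypotheses $c\in\mathscr{M}_{n,\infty}^2$, $R<1$, and the consistency of the $c_k$'s are used in an essential way.
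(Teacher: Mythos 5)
Your proposal is correct, and its second half (a priori bounds, extraction of a limit, Putinar, optimality, contradiction) is in substance the paper's argument: the paper gets compactness from Lemma~\ref{Ek-Bd:cw-tail} and the Alaoglu theorem in $\mathscr{M}_{n,\infty}^2$, where you use the pointwise bound $|w_\af|\leq C R^{|\af|}$ plus a diagonal extraction — these are equivalent, and your tail estimate $\sum_\af |c_\af| R^{|\af|} \le \|c\|_2 \bigl(\sum_\af R^{2|\af|}\bigr)^{1/2}<\infty$ is exactly the content of \reff{cw-tail<R2t+2}. You do diverge from the paper in two places, both to your advantage in economy. First, for the flatness of $z^*$ the paper argues by contradiction through a chain of strict rank increases $\rank M_0(z^*)<\rank M_{d_g}(z^*)<\cdots$, deduces that some truncation $z^*|_l$ is flat, and then invokes Theorem~\ref{thm-CF:fec} to transfer flatness from the truncation to $z^*$ itself; you instead compute $\rank M_k(z^*)=|\supp{\mu^*}|$ directly for all $k\geq |\supp{\mu^*}|-1$ via Lagrange interpolation at the atoms, which is more elementary, pins down the exact rank, and bypasses Curto--Fialkow entirely. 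Second, for the optimality of the weak limit the paper routes through Lemma~\ref{optval:cvg}, whose proof cites Lasserre's convergence theorem for the relaxations; you only use the one-sided inequality $v_k\leq \int c_k\,d\mu^*$ (truncations of $\mu^*$ are feasible for \reff{min-tr:qmod}) together with the tail control, which is all that is actually needed and keeps the argument self-contained. The trade-off is that the paper's lemmas are stated once and reused, whereas your inline versions would have to be repeated if needed elsewhere; mathematically, both routes are sound.
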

\begin{proof}
Let $r_0:=d_g N$.
If $z^*_0 = 0$, then $z^*$ must be a zero vector, and it is clearly flat.
So, we consider the general case $z^*_0 > 0$.
We claim that a truncation $z^*|_{l}$ of $z^*$ must be flat for some $0<l\leq r_0$.
Otherwise, if not, then we must have
\[
1 = \rank \, M_0(z^*) < \rank \, M_{d_g}(z^*) < \rank \, M_{2d_g}(z^*) < \cdots <
\rank \, M_{r_0}(z^*).
\]
The above then implies the contradiction:
\[
|\supp{\mu^*}| \geq \rank M_{r_0}(z^*) \geq  1 + r_0/d_g > N.
\]
So, $z^*|_{l}$ is flat for some $l\leq r_0$.
By Theorem~\ref{thm-CF:fec}, we know $\mu^*$ is
the unique representing measure of $z^*|_{l}$.
Since $z^*$ is an extension of $z^*|_{l}$ and admits the same measure $\mu^*$,
we know that $z^*$ must also be flat.

We prove \reff{dist:Sol} by contradiction.
Suppose otherwise it is false, then
\be \label{distU>=a}
dist( w^{(k)}|_{2r}, U ) \geq a \quad \mbox{ for some } a>0.
\ee
By Lemma~\ref{Ek-Bd:cw-tail}, we know the sequence
$\{ w^{(k)} \}$ is bounded in the Hilbert space $\mathscr{M}_{n,\infty}^2$.
By the Alaoglu Theorem (cf. \cite[Theorem~V.3.1]{Conway}),
$\{ w^{(k)} \}$ has a subsequence that is convergent in the weak-$\ast$ topology.
That is, it has a subsequence, which we denote again by $\{ w^{(k)} \}$ for convenience,
such that for some $w^* \in \mathscr{M}_{n,\infty}^2$
\[
\ell (w^{(k)}) \to \ell( w^*)
\]
for every continuous linear functional in $\mathscr{M}_{n,\infty}^2$
(the space $\mathscr{M}_{n,\infty}^2$ is self-dual).
This implies
(e.g., choose $\ell$ as $\ell(w)= w_\af$ for each $\af \in \N^n$)
that for all $t$
\be \label{2k2t->w*2t}
w^{(k)}|_{2t} \to  w^*|_{2t}, \quad \mbox{ and } \quad w^*|_d = y.
\ee
Since $w^{(k)}$ is feasible in \reff{min-tr:qmod}, from the above we get that
\[
M_{t-d_i}(g_i \ast w^*) \succeq 0 \,(i=0,1,\ldots,m), \quad
M_{t-1}(\rho \ast w^*)  \succeq 0.
\]
Hence, the vector $w^*$ is a full moment sequence
whose localizing matrices of all orders are positive semidefinite
(because we have $w^*|_{2t} \in Q_k(K)^*$ for all $t$).
By Lemma~3.2 of Putinar \cite{Put}, we know $w^*$ admits a $K$-measure
and $w^*$ belongs to $E_\infty(y)$. Note that $c$ is
a continuous linear functional acting on $\mathscr{M}_{n,\infty}^2$
as $\langle c, w \rangle$.  So,
\[
\langle c, w^* \rangle = \lim_{k \to \infty} \langle c, w^{(k)} \rangle.
\]
By Lemma~\ref{optval:cvg}, we know that $w^*$ is also an optimizer of \reff{min<c,w>:E-inf}
and $w^*|_{2r}$ belongs to $U$. However, \reff{2k2t->w*2t} implies the convergence
$
w^{(k)}|_{2r} \to  w^*|_{2r},
$
and \reff{distU>=a} implies that $w^*|_{2r}$ does not belong to $U$,
a contradiction. Therefore, \reff{dist:Sol} must be true.

When {\tt Sol} is a singleton, $\{w^{(k)}|_{2r}\}$ clearly converges to
a flat tms.
\end{proof}

By Theorem~\ref{thm:cvg-flat}, we know that every limit point of
the sequence $\{w^{(k)}|_{2r}\}$ is flat.
We would like to remark that the conditions in
Theorem~\ref{thm:cvg-flat} are only for theoretical analysis.
In practical implementations, they are not required.

\subsection{The case $K=\re^n$}

When $K=\re^n$, we propose a similar method like \reff{min-tr:qmod} for solving TMPs.
To find a finitely atomic representing measure for a tms,
it is enough to find one of its flat extensions.
Like \reff{min-tr:qmod}, a practical method is that
for an order $k\geq d/2$ we solve the SDP problem:
\be \label{tmp-R:minTr}
\left\{
\baray{rl}
\underset{  w\in\mathscr{M}_{n,2k} }{\min} &  c_k^Tw  \\
s.t. & w|_d =y, \,  M_k(w) \succeq 0.
\earay \right.
\ee
For $k > d/2$, the above feasible set is typically unbounded.
To guarantee \reff{tmp-R:minTr} has a minimizer,
we usually choose $c_k$ as $c_k^T [x]_{2k} = [x]_k^T C_k [x]_k$
with $C_k \succeq 0$.
A simple choice for $C_k$ is the identity matrix.

\begin{exm} \label{emp:(2,4)6supt}
Consider the tms $y \in \mathscr{M}_{2,4}$:
\[
( 1;\,    -1/6,\,     1/2;\,    3/2,\,     0,\,     3/2;\,
-7/6,\,     1,\,     1/3,\,     3/2;\,    7/2,\,
-1,\,    2,\,     1,\,    7/2).
\]
Its moment matrix $M_2(y)$ is positive definite.
By Theorem~3.3 of \cite{FiNi1}, this tms $y$ admits a measure.
Choose $c_k$ as $c_k^T[x]_{2k} = [x]_k^T[x]_k$,
and solve \reff{tmp-R:minTr} for $k=4$.
The computed optimal $w^*\in \mathscr{M}_{2,8}$ is flat.
It admits a $7$-atomic measure supported on the points
\footnote{Here only four decimal digits are shown to present points of the support.}:
\[
\baray{c}
(-1.1902,   -1.4545),
(-0.2725,   -1.4977),
( 1.4406,  -0.9396),
( 0.2150,   -0.4613), \\
(-1.7147,    0.1952),
(-1.6942,    1.1593),
( 0.9645,    1.7098).
\earay
\]
The tms $y$ admits the same $7$-atomic measure.
\qed
\end{exm}

We would like to remark that solving \reff{min-tr:qmod}
or \reff{tmp-R:minTr} might not give a representing measure for $y$
whose support has minimum cardinality.
For instance, in Example~\ref{emp:(2,4)6supt},
the tms $y$ there admits a $6$-atomic measure
(with support $\{\pm (1,1), \pm (1,-1),\\ (1,2), (-2,1)\}$,
but we got a $7$-atomic measure by solving \reff{tmp-R:minTr}.

\begin{exm}[random examples]
Now we test the performance of solving \reff{tmp-R:minTr}
for finding flat extensions.
We choose $c_k$ such that $c_k^T w = C_k \bullet M_k(w)$
with $C_k=LL^T$ and $L$ being a Gaussian random matrix.
Select $(n,d)$ from Table~\ref{tab-Ball:(n,d)}.
Like in Example~\ref{rand:Ball}, for each pair $(n,d)$,
randomly generate $N=\binom{n+d}{d}$ points $u_1,\ldots, u_N$
obeying a Gaussian distribution,
and set $y = a_1 [u_1]_d + \cdots a_N [u_N]_d$ ($a_i>0$ is randomly chosen).
For each instance, solve \reff{tmp-R:minTr} starting with $k=d$.
If an optimal $w^*$ of \reff{tmp-R:minTr}
has a flat truncation $w^*|_t(t\geq d)$, we stop; otherwise,
increase $k$ by one, and solve \reff{tmp-R:minTr} again.
Repeat this process until a flat extension of $y$ is found.
In all the tested instances, we were able to find a flat extension of $y$
for some $k \geq d$.
In the computations, we sometimes obtained $|\supp{\mu}| < N$
and sometimes $|\supp{\mu}|>N$, while the former was gotten more often.
\qed
\end{exm}

The above random example leads us to believe that \reff{tmp-R:minTr} is
practical for solving TMPs.
We now give a sufficient condition for
\reff{tmp-R:minTr} to produce a flat extension.

\begin{theorem}
In \reff{tmp-R:minTr}, let $d=2d_0+1$, $k=d_0+1$ and
$c_k^T[x]_{2k} = [x]_k^TC_k[x]_k$ be such that $C_k \succ 0$.
If the tms $y\in \mathscr{M}_{n,d}$ admits a $\rank \,M_{d_0}(y)$-atomic measure,
then the optimizer $w^*$ of \reff{tmp-R:minTr} is flat.
\end{theorem}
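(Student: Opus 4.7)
The plan is to exploit the fact that the choice $k=d_0+1$ makes \reff{tmp-R:minTr} almost rigid: since $2k=d+1$ and the constraint $w|_d=y$ pins down every moment $w_\af$ with $|\af|\leq d$, the only free entries of $M_k(w)$ are the moments of degree exactly $2k$. Splitting rows and columns of $M_k(w)$ by whether the index has degree $\leq d_0$ or degree $=d_0+1$, I would write
\[
M_k(w) \,=\, \bpm A & B \\ B^T & W(w) \epm,
\]
where $A=M_{d_0}(y)$ and $B$ are entirely determined by $y|_d$, and only the Hankel-structured block $W(w)$ depends on the free moments. By the generalized Schur complement, the PSD condition $M_k(w)\succeq 0$ then reduces to $W(w) \succeq B^TA^\dagger B$ (the range/kernel compatibility being automatic from the feasible witness constructed in the next step).

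Next I would locate the Loewner-minimal admissible block using the atomicity hypothesis. Let $r=\rank\,M_{d_0}(y)$ and let $\mu$ be the promised $r$-atomic representing measure; set $v = \int [x]_{2k}\,\mathit{d}\mu \in \mathscr{M}_{n,2k}$. Then $v|_d=y$ and $M_k(v)\succeq 0$; moreover $\rank\,M_k(v)\leq |\supp{\mu}|=r$ while $\rank\,M_k(v)\geq \rank\,M_{d_0}(v)=\rank\,A=r$, so $\rank\,M_k(v)=\rank\,A$. The Schur rank identity
\[
\rank\,M_k(v) \,=\, \rank\,A + \rank\,\bigl(W(v)-B^TA^\dagger B\bigr)
\]
then pins $W(v) = B^TA^\dagger B$. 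Thus $v$ is a feasible flat tms whose $W$-block attains the Loewner minimum.

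I would finish using $C_k\succ 0$ to extract uniqueness. Partitioning $C_k$ conformably, the objective splits as
\[
c_k^T w \,=\, \kappa(y) + \mbox{tr}\bigl(C_k^{(22)}(W(w)-W(v))\bigr),
\]
where $\kappa(y)$ collects the $y$-dependent constants and $C_k^{(22)}$ is the bottom-right principal submatrix of $C_k$. Any feasible $w$ satisfies $\Dt:=W(w)-W(v)\succeq 0$; since $C_k\succ 0$ forces $C_k^{(22)}\succ 0$, one has $\mbox{tr}(C_k^{(22)}\Dt)\geq 0$ with equality iff $\Dt=0$. Hence $w=v$ is the unique minimizer, so $w^*=v$ and
\[
\rank\,M_k(w^*) \,=\, r \,=\, \rank\,M_{d_0}(w^*),
\]
which is precisely the flatness condition.

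The main obstacle I anticipate is really only bookkeeping in the block decomposition: one must verify carefully that for $k=d_0+1$ every entry of $M_k(w)$ outside the bottom-right block is fixed by $y|_d$, so that the degree-$2k$ moments are cleanly parametrized by $W(w)$. Once that identification is set up, the Schur complement together with the strict positivity of $C_k$ forces the optimal $W$ to coincide with the minimum Schur-complement value, which is exactly what flatness demands.
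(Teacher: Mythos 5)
Your proposal is correct and follows essentially the same route as the paper's proof: construct the tms $v=\int [x]_{2k}\,d\mu$ from the $r$-atomic measure, use the rank sandwich $r\geq\rank M_k(v)\geq\rank M_{d_0}(y)=r$ to pin its bottom-right block at the Schur-complement minimum $B^TM_{d_0}(y)^+B$, and then use $C_k^{(22)}\succ 0$ (the paper's $C_{22}$) to conclude that $v$ is the unique optimizer, hence $w^*=v$ is flat. The only cosmetic difference is that you make the generalized Schur complement and the rank identity explicit where the paper states the consequences $Z=B^TM_{d_0}(y)^+B$ and $W-Z\succeq 0$ directly.
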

\begin{proof}
Let $r = \rank \,M_{d_0}(y)$ and $\mu$ be a $r$-atomic measure admitted by $y$. Set
\[
z = \int [x]_{d+1}d\mu, \quad
M_{d_0+1}(z) = \bbm  M_{d_0}(y) & B \\ B^T & Z \ebm,
\quad C_k = \bbm C_{11} & C_{12} \\ C_{12}^T & C_{22} \ebm.
\]
The matrix $B$ consists of moments $y_\af$ with $|\af|\leq d$. Note that
\[
r = |\supp{\mu}| \geq \rank\,M_{d_0+1}(z) \geq \rank M_{d_0}(y) =r.
\]
So $\rank \,M_{d_0+1}(z)=r$. Then $M_{d_0+1}(z) \succeq 0$ implies
\[
Z = B^T M_{d_0}(y)^+ B \succeq 0,
\]
where the superscript $+$ denotes the Moore-Penrose Pseudo inverse of a matrix.
For every $w$ that is feasible for \reff{tmp-R:minTr}, write
\[
M_{d_0+1}(w) = \bbm  M_{d_0}(y) & B \\ B^T & W \ebm,
\]
\[
C_k \bullet M_{d_0+1}(w) = C_{11} \bullet M_{d_0}(y) + 2 C_{12} \bullet B + C_{22} \bullet W.
\]
Thus, the objective of \reff{tmp-R:minTr} can be equivalently replaced
by $C_{22} \bullet W$. If $M_{d_0+1}(w) \succeq 0$,
then $W-Z\succeq 0$ and
$
C_{22} \bullet W \geq C_{22} \bullet Z.
$
So, $C_{22} \bullet Z$ is a lower bound of $C_{22} \bullet W$
for all feasible $w$. This implies that $z$ is an optimizer of \reff{tmp-R:minTr}.
Indeed, $z$ is the unique optimizer of \reff{tmp-R:minTr}.
Suppose $w$ is another optimizer, then
$
C_{22} \bullet (W-Z) = 0.
$
The fact that $W\succeq Z$ and $C_{22} \succ 0$ implies $W=Z$.
Hence $w=z$. Clearly, $z$ is flat, and the proof is complete.
\end{proof}

\noindent
{\it Remark:}
When $K$ in \reff{def:K} is noncompact,
we can similarly solve the SDP problem:
\be
\left\{
\baray{rl}
\underset{ w\in\mathscr{M}_{n,2k} }{\min} &   c_k^Tw   \\
s.t. & w|_d =y,  M_{t-d_i}(g_i \ast w)  \succeq 0, \, i =0, 1,\ldots,m
\earay \right.
\ee
in analogue to \reff{min-tr:qmod} and \reff{tmp-R:minTr}.
To guarantee it always has a minimizer $w^*$,
we typically choose $c_k$ as $c_k^T[x]_{2k} = [x]_k^T C_k [x]_k$
with $C_k \succ 0$, like in \reff{tmp-R:minTr}.
If a truncation $w^*|_t \,(t\geq d)$ is flat,
we get a flat extension of $y$.

\bigskip \noindent
{\bf Acknowledgement}
The authors thank Larry Fialkow for helpful comments on this work.
J.~William Helton was partially supported by NSF grants
DMS-0700758, DMS-0757212, and the Ford Motor Co.
Jiawang Nie was partially supported by NSF grants
DMS-0757212 and DMS-0844775.

\appendix
\section{Optimization with measures}
\setcounter{equation}{0}

We prove that for a dense set of optimization problems
\reff{min-CM:m(y)}, the optimal solutions are measures having
finite supports. This justifies condition \reff{hyp:fin-sup} in \S \ref{sec:heurThy}.

\begin{lem} \label{extrm=>finite}
Let $y$ be a tms in $\mathscr{M}_{n,d}$ and $K$ be a subset of $\re^n$.
Assume the set $meas(y,K)$ is nonempty.
If a measure $\mu$ is an extreme point of $meas(y,K)$,
then $\mu$ is finitely atomic and $|\supp{\mu}|\leq \binom{n+d}{d}$.
\end{lem}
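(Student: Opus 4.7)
The plan is to establish the contrapositive: if $\mu \in meas(y,K)$ has $|\supp(\mu)| > N := \binom{n+d}{d}$, then $\mu$ is not extreme. To show this, I would construct a signed Borel measure $\nu$ on $K$ such that $\nu \not\equiv 0$, $|\nu| \leq \mu$ (so that $\mu \pm \nu$ are both nonnegative), and $\int_K x^\alpha d\nu = 0$ for every $|\alpha| \leq d$. Granted such a $\nu$, both $\mu+\nu$ and $\mu-\nu$ lie in $meas(y,K)$, and $\mu = \tfrac{1}{2}(\mu+\nu)+\tfrac{1}{2}(\mu-\nu)$ exhibits $\mu$ as a nontrivial convex combination, contradicting extremality.

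The construction of $\nu$ is a standard dimension count. First I would partition the support of $\mu$ into $N+1$ pairwise disjoint Borel sets $B_1,\ldots,B_{N+1}$ with $\mu(B_i)>0$ for every $i$. Define the linear map
\[
\Phi:\re^{N+1}\to \re^N,\qquad
\Phi(c_1,\ldots,c_{N+1}) \,=\, \Big(\sum_{i=1}^{N+1} c_i \int_{B_i} x^\alpha\, d\mu\Big)_{|\alpha|\leq d}.
\]
Since $\dim \re^{N+1}>\dim \re^N$, the kernel of $\Phi$ is nontrivial; pick a nonzero $c$ in the kernel and rescale so that $\max_i |c_i|\leq 1$. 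Setting $f := \sum_{i=1}^{N+1} c_i \mathbf{1}_{B_i}$ and $d\nu := f\, d\mu$, we get $\|f\|_\infty\leq 1$ (so $|\nu|\leq \mu$), $\nu\not\equiv 0$ (since $f\neq 0$ on a set of positive $\mu$-measure), and $\int_K x^\alpha d\nu = 0$ for all $|\alpha|\leq d$ by the choice of $c$, completing the contradiction.

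The main obstacle is showing that such a partition $\{B_i\}_{i=1}^{N+1}$ exists whenever $|\supp(\mu)|>N$. I would split into two cases. If $\supp(\mu)$ contains at least $N+1$ distinct atoms $u_1,\ldots,u_{N+1}$ of $\mu$, take $B_i = \{u_i\}$ (placing leftover mass into one of the $B_i$); then each $\mu(B_i)>0$ automatically. Otherwise $\supp(\mu)$ is infinite but has at most $N$ atoms, so the continuous (atomless) part of $\mu$ is nonzero; by standard measure theory any atomless finite Borel measure on $K$ admits partitions of its support into arbitrarily many disjoint pieces of positive measure, so we can split the atomless part into enough pieces and combine with the finitely many atoms to obtain the required $N+1$ sets of positive $\mu$-measure. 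This reduction is routine but is the one measure-theoretic step that needs care; once it is in hand the dimension-count argument finishes the proof.
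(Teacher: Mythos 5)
Your proof is correct and is essentially the same argument as the paper's: split $\supp{\mu}$ into $N+1$ disjoint pieces of positive measure, use the dimension count $\dim \re[x]_d = N$ to find a nonzero null combination of the restricted measures, and perturb $\mu$ in both directions to contradict extremality (your normalization $\max_i|c_i|\leq 1$ plays the role of the paper's small $\eps$). The only difference is that you explicitly justify the existence of the partition via the atomic/atomless decomposition, a routine step the paper leaves implicit.
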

\begin{proof}
We prove by contradiction. Suppose $\mu$ is extreme and $|\supp{\mu}| > N:=\binom{n+d}{d}$.
Then we can decompose $\supp{\mu}$ in a way such that
\[
\supp{\mu} = \bigcup_{j=1}^{N+1} S_j, \quad S_i \cap S_j = \emptyset,
\quad \mu(S_j) >0, \quad \forall \, i\ne j.
\]
For each $j$, let $\mu_j$ be the restriction of $\mu$ on $S_j$, i.e., $\mu_j = \mu|_{S_j}$.
Note that $\dim \re[x]_d = N$.
So there exists $(a_1,\ldots, a_{N+1}) \ne 0$ satisfying
\[
\sum_{j=1}^{N+1} a_j \int_K x^\af d \mu_j = 0, \quad \forall \, |\af| \leq d.
\]
Define a measure $\nu$ (possibly non-positive) such that
\[
\nu = \nu|_{S_1} + \cdots + \nu|_{S_{N+1}}, \quad
\nu|_{S_j} = a_j \mu_j, \, j=1,\ldots, N+1.
\]
Note that $\supp{\nu} \subseteq \supp{\mu}$.
Then, for $\eps >0$ sufficiently small,
$\mu \pm \eps \nu$ are nonnegative Borel measures supported on $K$,
representing the same tms $y$ and
\[ \mu = (\mu+\eps \nu)/2 + (\mu - \eps \nu)/2. \]
This contradicts the extremality of $\mu$ in $meas(y,K)$.
\end{proof}

\def\cT{\mathcal T}

\begin{theorem}
Let $y$ be a tms in $\mathscr{M}_{n,d}$.
For any real analytic function $f$ on a compact set $K$ and $\eps>0$,
there exists an real analytic function $\tilde f$ on $K$ such that
$\|\tilde{f} - f \|_{\infty} \leq \eps $ and every minimizer of
\be \label{min:tld-f}
\min_\mu  \quad \int_K   \tilde{f} d \mu
\quad \mbox{ s.t. }  \quad \mu \in meas(y,K)
\ee
has cardinality at most $\binom{n+d}{d}$.
Let {\tt Sol} be the set of all optimizers above. Then
\be  \label{card<=nd}
\Big|\bigcup_{ \mu \in {\tt Sol} } \{ \supp{\mu} \} \Big| \leq \binom{n+d}{d}.
\ee
\end{theorem}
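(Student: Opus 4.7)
The plan is to exhibit $\tilde f$ as $f$ plus a small polynomial perturbation whose zero set in $K$ pins down a finite candidate support. If $meas(y,K)=\emptyset$ the statement is vacuous, so I assume otherwise. Since $K$ is compact and every $\mu \in meas(y,K)$ has total mass $y_0$, the set $meas(y,K)$ is weak-$*$ compact and convex, so the weak-$*$ continuous functional $\mu \mapsto \int_K f\,d\mu$ attains its minimum $f_{\min}$ on a nonempty compact convex face $F$ of $meas(y,K)$.

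First I would apply Krein--Milman to pick an extreme point $\mu_0$ of $F$. Because $F$ is a face of $meas(y,K)$, $\mu_0$ is also extreme in $meas(y,K)$, so Lemma~\ref{extrm=>finite} gives $\supp{\mu_0}=\{u_1,\ldots,u_r\}$ with $r \le N := \binom{n+d}{d}$. I would then form the nonnegative polynomial
\[
h(x) \,:=\, \prod_{i=1}^{r} \|x-u_i\|_2^{2},
\]
whose zero set on $K$ is exactly $\{u_1,\ldots,u_r\}$, and set $\tilde f := f + \delta h$ with $\delta := \eps/\max_{x\in K} h(x)$; then $\tilde f$ is real analytic on $K$ and $\|\tilde f - f\|_\infty \le \eps$.

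For the verification, $\int_K \tilde f\,d\mu = \int_K f\,d\mu + \delta\int_K h\,d\mu \ge f_{\min}$ for every $\mu\in meas(y,K)$ (since $h\ge 0$ on $K$ and $\int_K f\,d\mu \ge f_{\min}$), with equality attained at $\mu_0$. So the new minimum is still $f_{\min}$, and $\mu^*$ lies in ${\tt Sol}$ iff both $\int_K f\,d\mu^*=f_{\min}$ and $\int_K h\,d\mu^*=0$. The latter, together with $h>0$ off $\{u_1,\ldots,u_r\}$, forces $\supp{\mu^*}\subseteq\{u_1,\ldots,u_r\}$, which simultaneously yields $|\supp{\mu^*}|\le r \le N$ for every $\mu^*\in{\tt Sol}$ and $\big|\bigcup_{\mu^*\in{\tt Sol}} \supp{\mu^*}\big|\le r \le N$. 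The one delicate point in the whole argument is securing the extreme minimizer $\mu_0$ via weak-$*$ compactness of $meas(y,K)$ and Krein--Milman; once that is in hand, Lemma~\ref{extrm=>finite} does the heavy lifting and the rest is a direct construction.
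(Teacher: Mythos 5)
Your proof is correct and follows the same overall strategy as the paper's: extract an extreme point of $meas(y,K)$ that minimizes $\int_K f\,d\mu$, invoke Lemma~\ref{extrm=>finite} to get a support $\{u_1,\ldots,u_r\}$ with $r\leq\binom{n+d}{d}$, and then perturb $f$ by at most $\eps$ in sup-norm using a nonnegative analytic function whose zero set on $K$ is exactly that support, forcing every minimizer of the perturbed problem onto those $r$ points. The two points of divergence are technical rather than structural. To produce the extreme minimizer you apply Krein--Milman to the argmin face $F$, whereas the paper represents a minimizer by a probability measure on the extreme points via the Choquet--Bishop--de Leeuw theorem and then argues that $\widetilde\Gamma$-almost every extreme point in its support is itself a minimizer; your route is the more elementary one and sidesteps that almost-everywhere argument, though you should record the one-line check that $F$ is a weak-$*$ compact face (so its extreme points are extreme in $meas(y,K)$). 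For the perturbation, you add the polynomial $\delta\prod_i\|x-u_i\|_2^2$ while the paper subtracts $\eps\exp\bigl(-\prod_i\|x-u_i\|_2^2\bigr)$; both work, but your normalization $\delta=\eps/\max_K h$ tacitly assumes $\max_K h>0$, which fails only in the degenerate case $K\subseteq\{u_1,\ldots,u_r\}$, where the theorem holds trivially with $\tilde f=f$. Your final union bound $\bigcup_{\mu^*\in{\tt Sol}}\supp{\mu^*}\subseteq\{u_1,\ldots,u_r\}$ delivers \reff{card<=nd} directly and is cleaner than the paper's appeal to averaging the minimizers.
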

\begin{proof}
For a given real analytic function $f$ on $K$, consider
the optimization problem
\be \label{min:int(f)-K}
\min_\mu  \quad \int_K  f d \mu
\quad \mbox{ s.t. }  \quad \mu \in meas(y,K).
\ee
The set of  positive measures on  $K$ whose total masses equal
$y_0$ is compact in the weak-$\ast$ topology (denote this topology by $\cT$)
by the Alaoglu Theorem (cf. \cite[Theorem V.3.1]{Conway}).
Recall that the weak-$\ast$ topology is the topology
on the measures regarded as the dual space of the Banach space $\mathscr{C}(K)$.
It is the weakest topology for which the
convergence of a sequence  of measures $\mu_n \to \mu$ implies
that for every $h \in \mathscr{C}(K)$
\be \label{eq:wkstar}
\int_K  h d \mu_n   \to \int_K  h d \mu.
\ee
This implies that every moment of $\mu_n$ converges to
the corresponding one of $\mu$. Hence, $meas(y,K)$
is $\cT$-closed inside a compact set, and it is also $\cT$-compact.

\def\cE{\mathcal E}

Thus, by compactness of its feasible set,
the optimization problem~\eqref{min:int(f)-K} has a minimizer,
which is a  generalized convex combination of certain extreme points,
by the  Choquet-Bishop-de Leeuw Theorem  (cf. \cite{P66}).
To be more precise, this means that for every $\hat \mu \in meas(y,K)$,
there exists a probability measure $\Gamma$ on the set $\cE$ of extreme points
of $meas(y,K)$ such that
\[ \ell(\hat \mu )= \int_\cE \ell( \mu)\; d\Gamma(\mu)\]
for every affine function $\ell$ on $meas(y,K)$.

Let $\gamma^*$ be the optimal value of \eqref{min:int(f)-K}
and $\tilde{\mu}$ be a minimizer,
and let $\widetilde{\Gamma}$ denote the probability measure on $\cE$
representing $\tilde{\mu}$ and let
$\widetilde{\cE}$ denote the support of $\widetilde{\Gamma}$. Then
$$\gamma^* = \int_K f d \tilde{\mu} = \int_{ \widetilde{\cE} }
\left(\int_K f d\mu \right) d\widetilde{\Gamma} (\mu).$$
By optimality of $\gamma^*$,
$\int_K f d \mu \geq \gamma^*$ for all $\mu \in  \widetilde{\cE}$.
Indeed, $\int_K f d \mu = \gamma^*$ for all $\mu \in  \widetilde{\cE}$.
Otherwise, suppose  $\int_K f d \mu > \gamma^*$ on a set of $\mu$ having positive
$\widetilde{\Gamma}$ measure. Then
$$\gamma^* =  \int_{ \widetilde{\cE} }
\left(\int_K f d\mu \right) d \widetilde{\Gamma}(\mu) >  \gamma^*,$$
which yields a contradiction.
This implies  $\int_K f d \mu = \gamma^*$ on $\widetilde{\cE}$.
Choose a measure $\mu^* \in  \widetilde{\cE}$. It is extreme and
the optimum of \eqref{min:int(f)-K} is attained at $\mu^*$.

By Lemma \ref{extrm=>finite},
we have  $|\supp{\mu^*}|\leq N:=\binom{n+d}{d}$.
Without loss of generality, we can normalize $y_0=\int_K d\mu^*=1$ and denote
$
\supp{\mu^*} = \{u_1, \ldots, u_r\}.
$
Let $e(x)$ be the exponential function defined as
\[
e(x) =  \eps \cdot \exp(-\|x-u_1\|_2^2 \cdots \|x-u_r\|_2^2).
\]
Clearly, $e(x) =\eps$ for all $x\in \supp{\mu^*}$
and $0< e(x) < \eps$ for all $x \not\in \supp{\mu^*}$.
This implies that
$$
\max_{\mu \in meas(y,K) }  \int_K e \; d \mu = \int_K e \; d \mu^* =\eps. $$
\def\tf{\tilde f}
Set $\tf := f- e$ and
\be
\label{eq:minfeps}
\beta:=
\min_{\mu \in meas(y,K) } \int_K \tf \; d \mu.
\ee
Then
\be
\beta \ \geq \
\min_{\mu \in meas(y,K) } \int_K f \; d \mu-
\max_{\mu \in meas(y,K) }  \int_K e \; d \mu =
\int_K f \; d \mu^* - \eps.
\ee
On the other hand
$$
\int_K f \; d \mu^* - \eps =
\int_K  \tf \; d \mu^* \ \geq
\min_{\mu \in meas(y,K) } \int_K  \tf \; d \mu =\beta
$$
because $\mu^*$ belongs to $meas(y,K)$. Thus
\[
\beta = \int_K f \; d \mu^* - \eps
= \int_K \tf d \mu^*
\]
and $\mu^*$ is a minimizer of \eqref{eq:minfeps} as well as \eqref{min:tld-f}.

Suppose $\tilde \mu^*$ is another minimizer of
\eqref{eq:minfeps}.
We wish to show that
\be \label{supptinclu}
suppt({\tilde \mu^*})  \subseteq \{u_1, \ldots, u_r\}.
\ee
If this is not true, then
$\int e \; d {\tilde \mu^*} <\eps$ and
$$
\beta=
\int_K f \; d \mu^*- \eps
\  \ < \ \
  \min_{\mu \in meas(y,K) } \int_K f \; d \mu-
\int_K e \; d{\tilde  \mu}^*.
$$
This implies
$$
\beta <  \int_K f \; d{\tilde  \mu}^*-
\int_K e  \; d{\tilde  \mu}^* = \int_K \tf d{\tilde  \mu}^*  =\beta,
$$
which is a contradiction.

The inequality \reff{card<=nd} follows from the first part,
because the average of all the minimizers
is still a minimizer.
\end{proof}

\noindent {\it Remark:}
As we can see in the above proof,
if $\mu^*$ is a unique minimizer of \reff{min-CM:m(y)},
then $\mu^*$ must have finite support and $|\supp{\mu^*}|\leq \binom{n+d}{d}$.

\medskip

The following lemmas are used in the proof of Theorem~\ref{thm:cvg-flat},

\begin{lem} \label{Ek-Bd:cw-tail}
Suppose $K\subseteq B(0,R)$ with $R<1$ and $c$ is in $\mathscr{M}_{n,\infty}^2$.
If a tms $w$ belongs to $E_k(y) \cup E_\infty(y)$, then
\[ \|w\|_2\leq y_0/(1-R^2), \]
and for any integer $ t >0$ it holds that
\be \label{cw-tail<R2t+2}
\Big| \sum_{ |\af| >2t } c_\af w_\af \Big|
\leq \|c\|_2 \cdot y_0 \cdot R^{2t+2}/(1-R^2).
\ee
\end{lem}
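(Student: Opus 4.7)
The plan is to split the argument into two cases depending on whether $w \in E_\infty(y)$ or $w \in E_k(y)$, and in both cases reduce everything to the two basic estimates
\begin{equation*}
w_\alpha^2 \,\le\, y_0\, w_{2\alpha}, \qquad \sum_{|\alpha|=d} w_{2\alpha} \,\le\, y_0 \, R^{2d},
\end{equation*}
which will together give $\sum_{|\alpha|=d} w_\alpha^2 \le y_0^2 R^{2d}$ and then $\|w\|_2^2\le y_0^2/(1-R^2)$ by summing a geometric series. Since $\sqrt{1-R^2}\ge 1-R^2$ when $R<1$, the first conclusion follows from $\|w\|_2\le y_0/\sqrt{1-R^2}\le y_0/(1-R^2)$. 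The tail bound \eqref{cw-tail<R2t+2} will then follow by applying Cauchy--Schwarz on $\mathscr{M}_{n,\infty}^2$ to $\sum_{|\alpha|>2t} c_\alpha w_\alpha$ using $\sum_{|\alpha|>2t} w_\alpha^2 \le y_0^2 R^{4t+2}/(1-R^2)$.

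For $w\in E_\infty(y)$ the two basic estimates are almost immediate: $w$ admits a $K$-measure $\mu$ with $\mu(K)=y_0$, so by Cauchy--Schwarz $w_\alpha^2 = (\int_K x^\alpha d\mu)^2 \le y_0 \int_K x^{2\alpha} d\mu = y_0 w_{2\alpha}$. The multinomial identity gives
\begin{equation*}
\sum_{|\alpha|=d} x^{2\alpha} \,\le\, (x_1^2+\cdots+x_n^2)^d \,=\, \|x\|_2^{2d} \,\le\, R^{2d} \quad \forall \, x\in K,
\end{equation*}
and integrating against $\mu$ produces the second estimate.

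For $w\in E_k(y)$, the feasibility of $w$ in \eqref{min-tr:qmod} includes $M_k(w)\succeq 0$ and $M_{k-1}(\rho\ast w)\succeq 0$ with $\rho(x)=R^2-\|x\|_2^2$. The first of these, restricted to the $2\times 2$ principal minor indexed by $1$ and $x^\alpha$, gives $w_\alpha^2\le y_0 w_{2\alpha}$ for $|\alpha|\le k$. From the second, testing $\mathscr{L}_w(\rho\, p^2)\ge 0$ with $p(x)=x^\alpha$ yields the one-step contraction $\sum_i w_{2\alpha+2e_i}\le R^2 w_{2\alpha}$ for $|\alpha|\le k-1$. I will then iterate this contraction and collect terms via the multinomial identity to obtain $\sum_{|\beta|=d}\binom{d}{\beta} w_{2\beta}\le R^{2d} y_0$ for all $d\le k$, and in particular $\sum_{|\beta|=d} w_{2\beta}\le R^{2d}y_0$, since each $w_{2\beta}\ge 0$ by $M_k(w)\succeq 0$. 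This recovers the two basic estimates in the second case; the finite-degree truncation is then embedded into $\mathscr{M}_{n,\infty}^2$ by padding with zeros, so the tail and norm bounds follow verbatim.

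The main obstacle is the $E_k(y)$ case, because there is no measure available to integrate against. The whole point is to squeeze the geometric decay $w_{2\alpha}=O(R^{2|\alpha|})$ out of the single localizing matrix condition $M_{k-1}(\rho\ast w)\succeq 0$. The careful step is to iterate $\sum_i w_{2\alpha+2e_i}\le R^2 w_{2\alpha}$ in a way compatible with the multinomial weights so that the $\binom{d}{\beta}$ factors cancel when one sums the resulting inequality over $|\alpha|=0,\dots,d-1$; this has to be organized so that the iteration never leaves the range $|\alpha|\le k-1$ where the localizing matrix actually certifies the inequality.
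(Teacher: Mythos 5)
Your reduction to the two per-degree estimates is natural, but two steps fail. The more serious one is the final constant in the tail bound. From $\sum_{|\af|=j} w_\af^2 \le y_0^2 R^{2j}$ you get $\sum_{|\af|>2t} w_\af^2 \le y_0^2 R^{4t+2}/(1-R^2)$, and Cauchy--Schwarz then yields $\big|\sum_{|\af|>2t} c_\af w_\af\big| \le \|c\|_2\, y_0\, R^{2t+1}/\sqrt{1-R^2}$. This is \emph{weaker} than the claimed bound $\|c\|_2\, y_0\, R^{2t+2}/(1-R^2)$ precisely when $\sqrt{1-R^2}>R$, i.e.\ for every $R<1/\sqrt{2}$ (for $R=0.1$, $t=1$ your bound is about ten times the target), so ``the tail bound will then follow'' is not true as stated: squaring the degree-wise estimates and taking a single square root at the end inherently loses a factor of order $\sqrt{1-R^2}/R$. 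The paper never passes through an $\ell^2$ estimate on $w$ alone; it bounds $\big(\sum_{2t<|\af|\le 2k} w_\af^2\big)^{1/2}$ by $\|M_{t,k}\|_F \le \mbox{Trace}(M_{t,k}) = \sum_{j>t}\sum_{|\af|=j} w_{2\af} \le \sum_{j>t} y_0 R^{2j}$, where $M_{t,k}\succeq 0$ is the principal block of $M_k(w)$ on indices of degree greater than $t$, and it is this $\ell^1$ sum $y_0R^{2t+2}/(1-R^2)$ that produces the stated constant. (The same device, $\|w\|_2\le \|M_k(w)\|_F \le \mbox{Trace}\,M_k(w)$, gives the norm bound.)

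Second, in the $E_k(y)$ case your estimate $w_\af^2\le y_0 w_{2\af}$ is only available for $|\af|\le k$, since the $2\times 2$ minor indexed by $1$ and $x^\af$ requires $x^{2\af}$ to be an entry of $M_k(w)$. The entries $w_\af$ with $k<|\af|\le 2k$ contribute to both $\|w\|_2$ and the tail sum and are left unbounded by your written argument; you would need something like the minor indexed by $x^\bt$, $x^\gm$ with $\bt+\gm=\af$, giving $w_\af^2\le w_{2\bt}w_{2\gm}$, organized so that summing over $|\af|=d$ does not pick up extra combinatorial factors. Your iteration of the contraction $\sum_i w_{2\af+2e_i}\le R^2 w_{2\af}$ to obtain $\sum_{|\bt|=j} w_{2\bt}\le y_0R^{2j}$ is correct and is the same mechanism as the paper's $\mathscr{L}_w(\|x\|_2^{2j})\le R^{2j}y_0$, and the $E_\infty(y)$ norm bound via Cauchy--Schwarz for the measure is fine; but as written the lemma's inequality \reff{cw-tail<R2t+2} is not established.
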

\begin{proof}
First, we consider the case that $w \in E_k(y)$.
The condition $M_{t-1}(\rho \ast w) \succeq 0$ in \reff{min-tr:qmod}
implies that for every $t=1,\ldots,k$
\[
R^2 \mathscr{L}_w(\|x\|_2^{2t-2}) - \mathscr{L}_w(\|x\|_2^{2t}) \geq 0.
\]
A repeated application of the above gives
\[
\mathscr{L}_w(\|x\|_2^{2t}) \leq  R^{2t} y_0.
\]
Since $M_k(w)$ is positive semidefinite, we can see that
\[
\|w\|_2 \leq \|M_k(w)\|_F \leq Trace(M_k(w)) = \sum_{t=0}^k \sum_{|\af| = t}\, w_{2\af},
\]
\[
\sum_{|\af| = t}\, w_{2\af} = \mathscr{L}_w(\sum_{|\af| = t} \,x^{2\af} )
 \leq \mathscr{L}_w(\|x\|_2^{2t}) \leq  R^{2t} y_0.
\]
Thus, it holds that
\[
\|w\|_2 \leq  y_0(1+R^2+\cdots+R^{2k}) \leq y_0/(1-R^2).
\]
When $k>t$, we have
\[
\Big| \sum_{ 2t< |\af| \leq 2k } \, c_\af w_\af \Big|^2 \leq
\Big(\sum_{ 2t< |\af| \leq 2k } \, c_\af^2\Big)
\Big(\sum_{ 2t< |\af| \leq 2k } \, w_\af^2\Big).
\]
Let $M_{t,k}$ be the principle submatrix of $M_k(w)$
whose row and column indices $\bt$ satisfy $ t < |\bt| \leq k$.
Clearly, $M_{t,k} \succeq 0$ and we similarly have
\[
\Big(\sum_{ 2t< |\af| \leq 2k } \, w_\af^2\Big)^{1/2}
\leq  \| M_{t,k} \|_F \leq Trace(M_{t,k})
\]
\[ =
\sum_{j=t+1}^k \sum_{|\af|=j} \, w_{2\af}
\leq \sum_{j=t+1}^k  \,\mathscr{L}_w(\|x\|_2^{2j})
\leq \sum_{j=t+1}^k \, y_0 R^{2j}.
\]
Combining all of the above , we get
\[
\Big| \sum_{ 2t< |\af| \leq 2k } \, c_\af w_\af \Big| \leq
\|c\|_2 \cdot y_0 \cdot (R^{2t+2}+\cdots+R^{2k} )
\leq \|c\|_2 \cdot y_0 \cdot R^{2t+2}/(1-R^2).
\]
Note that $w_\af=0$ if $|\af|>2k$. So, \reff{cw-tail<R2t+2} is true.

Second, we consider the case that $w \in E_\infty(y)$.
The sequence $w$ admits a $K$-measure $\mu$ such that
$w_\af = \int_K x^\af d \mu$ for every $\af$.
For every $k$, consider the truncation $z = \int_K [x]_{2k} d \mu$.
Clearly, the tms $z$ is feasible for \reff{min-tr:qmod}.
By part (i), we have
\[
\|z\|_2\leq y_0/(1-R^2), \qquad
\Big| \sum_{ 2t< |\af| \leq 2k } \, c_\af z_\af \Big|
\leq \|c\|_2 \cdot y_0 \cdot R^{2t+2}/(1-R^2).
\]
The above is true for all $k$, and implies \reff{cw-tail<R2t+2}
by letting $k\to\infty$.
\end{proof}

\begin{lem} \label{optval:cvg}
Assume $K\subseteq B(0,R)$ with $R<1$, and $c$ is in $\mathscr{M}_{n,\infty}^2$.
Let $\gamma_k, \gamma$ be the optimal values of \reff{min-tr:qmod}
and \reff{min-CM:m(y)} respectively. Then, we have the convergence
$\gamma_k \to \gamma$ as $k \to \infty$.
\end{lem}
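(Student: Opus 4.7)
The plan is to split $\gamma_k\to\gamma$ into the two one-sided inequalities $\limsup_k \gamma_k \leq \gamma$ and $\liminf_k \gamma_k \geq \gamma$, handled by a truncation argument and a weak-$\ast$ compactness argument respectively.

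For the upper bound I would fix an optimal measure $\mu^*$ of \reff{min-CM:m(y)} (whose existence was established in the appendix via Alaoglu and Choquet--Bishop--de Leeuw) and form its truncated moment vector $z^{(k)} := \int_K [x]_{2k}\,d\mu^*$. Since $\mu^*\in meas(y,K)$, we have $z^{(k)}|_d=y$; and since $g_i$ and $\rho=R^2-\|x\|_2^2$ are all nonnegative on $K$, each localizing matrix $M_{k-d_i}(g_i\ast z^{(k)})$ and $M_{k-1}(\rho\ast z^{(k)})$ is positive semidefinite. So $z^{(k)}$ is feasible for \reff{min-tr:qmod}, giving $\gamma_k \leq c_k^T z^{(k)} = \int_K c_k(x)\,d\mu^*$ where $c_k(x) := c_k^T[x]_{2k}$. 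A short Cauchy--Schwarz estimate using $\sum_{|\alpha|=j} x^{2\alpha} \leq \|x\|_2^{2j} \leq R^{2j}$ on $K$, together with $\|c\|_2<\infty$ and $R<1$, shows $c_k\to c$ uniformly on $K$; bounded convergence then gives $\int_K c_k\,d\mu^* \to \int_K c\,d\mu^* = \gamma$, proving $\limsup_k \gamma_k \leq \gamma$.

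For the lower bound I would follow the weak-$\ast$ compactness template used in Theorem~\ref{thm:cvg-flat}. Let $w^{(k)}$ be an optimizer of \reff{min-tr:qmod}, embedded in $\mathscr{M}_{n,\infty}^2$ by appending zeros, so that $c_k^T w^{(k)} = \langle c, w^{(k)}\rangle$. By Lemma~\ref{Ek-Bd:cw-tail}, $\|w^{(k)}\|_2 \leq y_0/(1-R^2)$ is uniformly bounded; passing to any subsequence, the Alaoglu theorem on the Hilbert space $\mathscr{M}_{n,\infty}^2$ yields a further subsequence $\{w^{(k_j)}\}$ converging weakly to some $w^*$. Weak convergence forces entrywise convergence (apply the continuous linear functional $\ell(w)=w_\alpha$), so $w^*|_d=y$ and every localizing matrix of $w^*$ of arbitrary order inherits positive semidefiniteness from those of $w^{(k_j)}$. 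Lemma~3.2 of Putinar~\cite{Put} then upgrades $w^*$ to a full moment sequence admitting a $K$-measure, so $w^* \in E_\infty(y)$ and $\langle c, w^*\rangle \geq \gamma$. Since $c$ acts as a continuous linear functional on $\mathscr{M}_{n,\infty}^2$, weak convergence yields $\gamma_{k_j}=\langle c, w^{(k_j)}\rangle \to \langle c, w^*\rangle \geq \gamma$; running this along a subsequence that realizes $\liminf_k \gamma_k$ delivers $\liminf_k \gamma_k \geq \gamma$, which combined with the upper bound closes the argument.

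The main technical point will be the invocation of Putinar's lemma to ensure that the weak limit $w^*$ actually corresponds to a $K$-measure and hence lies in $E_\infty(y)$; the identification $c_k^T w^{(k)} = \langle c, w^{(k)}\rangle$ after zero embedding and the uniform approximation $c_k\to c$ on $K$ rely essentially on the hypotheses $R<1$ and $c\in\mathscr{M}_{n,\infty}^2$ but are otherwise routine. Both ingredients have already appeared in the proof of Theorem~\ref{thm:cvg-flat}, so the argument largely proceeds by reassembling tools that are already in place.
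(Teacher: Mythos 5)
Your proof is correct, but it takes a genuinely different route from the paper's. The paper argues by uniform truncation of the objective: using the tail estimate \reff{cw-tail<R2t+2} of Lemma~\ref{Ek-Bd:cw-tail}, it replaces $c$ by its degree-$2t$ truncation $c_t$ with an error of at most $\eps$ uniformly over $E_k(y)\cup E_\infty(y)$, and then invokes Theorem~1 of Lasserre \cite{Las08} to get convergence $\tau_k\to\tau$ of the optimal values of the truncated problems, finishing with a triangle inequality $|\gamma_k-\gamma|\leq|\gamma_k-\tau_k|+|\tau_k-\tau|+|\tau-\gamma|\leq 2\eps+o(1)$. You instead prove the two one-sided bounds directly: the upper bound $\limsup_k\gamma_k\leq\gamma$ by exhibiting the truncated moment vector of an optimal (or near-optimal) measure as a feasible point of \reff{min-tr:qmod} and using the uniform convergence $c_k\to c$ on $K$ (which is the same geometric-series estimate underlying Lemma~\ref{Ek-Bd:cw-tail}); and the lower bound $\liminf_k\gamma_k\geq\gamma$ by the weak-$\ast$ compactness and Putinar argument that the paper deploys in the proof of Theorem~\ref{thm:cvg-flat}. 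Your version is more self-contained --- it replaces the black-box appeal to \cite{Las08} (whose proof is itself a compactness argument of the same flavor) by machinery already present in the paper, and for the lower bound it needs only the norm bound $\|w\|_2\leq y_0/(1-R^2)$ rather than the full tail estimate. The paper's version is shorter on the page and isolates the analytic content in the single inequality \reff{cw-trun<eps}. There is no circularity in your approach, since Theorem~\ref{thm:cvg-flat} uses this lemma but your proof does not use that theorem, only its technique. Two small points you should make explicit if you write this up: the identification $c_k^Tw^{(k)}=\langle c,w^{(k)}\rangle$ uses the standing consistency assumption on the $c_k$'s from \S\ref{sec:heurThy}, and the identity $\langle c,w^*\rangle=\int_K c\,d\mu$ for the representing measure of the weak limit requires interchanging the sum over $\af$ with the integral, which is justified by absolute convergence on $K\subseteq B(0,R)$ with $R<1$ and $\|c\|_2<\infty$.
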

\begin{proof}
By Lemma~\ref{Ek-Bd:cw-tail},
for an arbitrary $\eps>0$, there exists $t$ such that
\be \label{tail<eps}
\Big| \sum_{ |\af| > t }  c_\af w_\af \Big|  < \eps,
\quad \forall \, w \in E_k(y) \cup E_\infty(y).
\ee
This implies that for every $w \in E_k(y) \cup E_\infty(y)$ with $k\geq t$, it holds that
\be \label{cw-trun<eps}
|\langle c, w \rangle - c_t^Tw|_{2t} | < \eps.
\ee
Now we consider the truncated optimization problems
\be  \label{min-ctw}
\underset{ w\in\mathscr{M}_{n,2k} }{\min} \quad   c_t^Tw|_{2t}
\qquad s.t. \quad   w \in E_k(y)
\ee
and
\be  \label{opt-ct(x)-mu}
\min_{ w }  \quad c_t^Tw|_{2t}
\quad \mbox{ s.t. }  \quad w \in E_\infty(y).
\ee
Let $\tau_k$ and $\tau$ be the optimal values of \reff{min-ctw} and \reff{opt-ct(x)-mu}
respectively. Note that \reff{min-ctw} is
a semidefinite relaxation of \reff{opt-ct(x)-mu}.
Thus, we have $\tau_k \to \tau$, by Theorem~1 of Lasserre \cite{Las08}.
The inequality \reff{cw-trun<eps} implies that for all $k\geq t$
\[
\underset{ w\in E_k(y) }{\min}  c_t^T w|_{2t} - \eps
\leq \underset{ w\in E_k(y) }{\min} c_k^Tw  \leq
\underset{ w\in E_k(y) }{\min}  c_t^T w|_{2t} + \eps.
\]
This shows that $|\gamma_k-\tau_k| \leq \eps$ for $k\geq t$.
Applying a similar argument to \eqref{opt-ct(x)-mu}, we can get $|\gamma-\tau| \leq \eps$.
Note that
\[
| \gamma_k - \gamma |  \leq | \gamma_k - \tau_k |  +
| \tau_k - \tau | + | \tau - \gamma |.
\]
Because $\tau_k \to \tau$, one can get that
\[
0 \leq \limsup_{k\to\infty} | \gamma_k - \gamma | \leq   2\eps.
\]
Since $\eps>0$ is arbitrary, we must have $\gamma_k \to \gamma$
as $k\to \infty$.
\end{proof}

\end{document}